\newcommand{\C}{\mathbb{C}}
\newcommand{\R}{\mathbb{R}}
\newcommand{\Z}{\mathbb{Z}}
\newcommand{\N}{\mathbb{N}}
\newcommand{\E}{\mathbb{E}}
\newcommand{\bP}{\mathbb{P}}
\newcommand{\bS}{\mathbb{S}}
\newcommand{\diag}{\operatorname{diag}}
\newcommand{\Cov}{\mathrm{Cov}}
\newcommand{\dist}{\mathrm{dist}}
\DeclareFontFamily{OT1}{pzc}{}
\DeclareFontShape{OT1}{pzc}{m}{it}{<-> s * [1.0] pzcmi7t}{}
\DeclareMathAlphabet{\mathpzc}{OT1}{pzc}{m}{it}
\renewcommand{\Re}{\operatorname{Re}}
\renewcommand{\Im}{\operatorname{Im}}
\renewcommand{\mod}{\, \operatorname{mod} \,}
\newcommand{\cs}{\bm{m}}
\newcommand{\cK}{\mathcal{K}}
\newcommand{\ol}{\overline}
\newcommand{\ul}{\underline}
\newcommand{\tr}{\mathrm{tr}}
\renewcommand{\bullet}{{\bm{\cdot}}}
\numberwithin{equation}{section}
\tikzset{
	-<-/.style args={#1 #2 #3}{
		decoration={
			markings,
			mark= at position 0.5 with
			{
				\ifthenelse{#3 = 1}
				{
					\fill[#2] (#1/6.0,0pt) -- (0.5*#1, #1/3.0) -- (-0.5*#1,0pt) -- (0.5*#1, #1/-3.0);   
				}
				{
					\ifthenelse{#3 = 2}
					{
						\fill[#2] (#1/-2.0,0pt) -- (0.5*#1, #1/3.0) -- (0.5*#1, #1/-3.0);   
					}
					{
						\ifthenelse{#3 = 3}
						{
							\draw[semithick, #2]  (0.533*#1,#1/2) -- (-0.433*#1, 0) -- (0.533*#1,-#1/2);  
						}{}
					}
				}
			},
		},
		postaction={decorate}
	},
	-<-/.default={6pt black 1}
}
\newcommand\s{1}
\renewcommand\d{\s cm}
\tikzset{
	blacknode/.style={
		circle,
		thick,
		draw=black,
		fill=gray!50,
		minimum size=0.05*\d,
		scale = \s,
	},
	whitenode/.style={
		circle,
		thick,
		draw=black,
		fill=white!50,
		minimum size=0.05*\d,
		scale = \s,
	},
	noborder/.style={
		circle,
		thick,
		minimum size=0.05*\d,
		scale = \s,
	},
	el/.style = {inner sep=2pt, align=left, sloped, font=\tiny},
	every label/.append style = {font=\tiny},
	position/.style args={#1:#2 from #3}{
		at=(#3.#1), anchor=#1+180, shift=(#1:#2)
	}
}
\newsavebox\thesmashminipage
\theoremstyle{plain}
\newtheorem{theorem}{Theorem}[section]
\newtheorem{corollary}[theorem]{Corollary}
\newtheorem{lemma}[theorem]{Lemma}
\newtheorem{proposition}[theorem]{Proposition}
\theoremstyle{definition}
\newtheorem{definition}[theorem]{Definition}
\newtheorem{remark}[theorem]{Remark}
\newtheorem{assumption}[theorem]{Assumption}
\numberwithin{theorem}{section}
\title{Marchenko-Pastur laws for Daniell smoothed periodograms}
\date{}
\author{Ben Deitmar}
\affil{\small \textit{Department of Mathematical Stochastics, ALU Freiburg \protect\\ Ernst-Zermelo-Str. 1, 79104 Freiburg, Germany \protect\\ E-mail: ben.deitmar@stochastik.uni-freiburg.de}}
\begin{document}
	\thispagestyle{empty}
	\maketitle
	\vspace{-1.2cm}
	
	\begin{abstract}
		Given a sample $X_0,...,X_{n-1}$ from a $d$-dimensional stationary time series $(X_t)_{t \in \Z}$, the most commonly used estimator for the spectral density matrix $F(\theta)$ at a given frequency $\theta \in [0,2\pi)$ is the Daniell smoothed periodogram
		$$S(\theta) = \frac{1}{2m+1} \sum\limits_{j=-m}^m I\Big( \theta + \frac{2\pi j}{n} \Big) \ ,$$
		which is an average over $2m+1$ many periodograms at slightly perturbed frequencies. We prove that the Marchenko-Pastur law holds for the eigenvalues of $S(\theta)$ uniformly in $\theta \in [0,2\pi)$, when $d$ and $m$ grow with $n$ such that $\frac{d}{m} \rightarrow c>0$ and $d\asymp n^{\alpha}$ for some $\alpha \in (0,1)$. This demonstrates that high-dimensional effects can cause $S(\theta)$ to become inconsistent, even when the dimension $d$ is much smaller than the sample size $n$.\\
		\\
		Notably, we do not assume independence of the $d$ components of the time series. The Marchenko-Pastur law thus holds for Daniell smoothed periodograms, even when it does not necessarily hold for sample auto-covariance matrices of the same processes.
	\end{abstract}
	
\section{Introduction}
The spectral density matrix (SDM) of a multivariate stationary time series encodes information about its periodicity and characterizes the covariance structure. It is a central object in the analysis of time series (\cite{Brillinger1981,Brillinger2001,taniguchi2012Book,BrockwellITSF,BrockwellTSTM,hannan2009Book,lutkepohl2005Book,lutkepohl2013Book,wei2019Book}) and its uses include change point detection (\cite{zhang2023spectral,mosaferi2025optimal,casini2024change,guan2019nonparametric}), independence testing (\cite{chang2025statistical,besserve2013statistical,LoubatonRosuelNew,hong1999hypothesis,yajima2009nonparametric}) as well as discriminant analysis (\cite{kakizawa1996discriminant,krafty2016discriminant,sakiyama2004discriminant}). It is well-known (see \cite{Brillinger1981}) that the na\"{i}ve canonical estimator for the spectral density, the periodogram, is not consistent due to its high variance. Instead, one commonly uses the Daniell smoothed periodogram (\cite{Brillinger1981,Brillinger2001,BrockwellITSF,BrockwellTSTM,krafty2016discriminant, FiecasPeriodogram, bohm2008structural, das2021spectral}), which is an average of several periodograms at slightly perturbed frequencies.\\
\\
When the dimension of the time series is comparable to the sample size, the Daniell smoothed periodogram can also become inconsistent, necessitating more complex and situational methods of inference of the SDM in a high-dimensional context (\cite{chang2025statistical, FiecasPeriodogram,li2021adaptive,FiecasSachs,sun2018large}). For high-dimensional independent identically distributed (i.i.d.) data, which does not have time series structure, there is a substantial amount of literature on the estimation of the underlying population covariance matrix $\Sigma$ from finite samples (see for instance \cite{DingFan_SpikedShrinkage, DonohoJohnstone_SpikedShrinkage, LedoitWolf_OptShrink}). Of fundamental importance is the Marchenko-Pastur law (first shown in \cite{MPOriginal}), which describes the relationship between the eigenvalues of $\Sigma$ and the eigenvalues of the sample covariance matrix. It can be used to infer the distribution of the population eigenvalues (\cite{el2008spectrum, FreeDeconvolution, LedoitWolf1, LedoitWolf2}).
\\
\\
Under assumptions equivalent to the $d$ components of the time series being independent, the Marchenko-Pastur law was shown to hold for (symmetrized) sample auto-covariance matrices (\cite{Aue, StrongMP}) and the Daniell smoothed periodogram as well as the corresponding spectral coherency matrix (\cite{Loubaton}). In \cite{bhattacharjee2016large} it was shown that sample auto-covariance matrices do not necessarily satisfy the Marchenko-Pastur law, if said assumptions are relaxed. The contribution of this paper is that in a quite general setting and without assuming independence of components, we show that the Marchenko-Pastur law holds for the Daniell smoothed periodogram.
\\
\\
While we do not pursue this direction here, our result allows for the application of inference methods developed for sample covariance matrices (\cite{el2008spectrum, FreeDeconvolution, LedoitWolf1, LedoitWolf2}) to the inference of the eigenvalues of the spectral density matrix. From results of \cite{bhattacharjee2016large} it follows that these methods are not applicable to the estimation of auto-covariance matrices, when the components are not independent.

\subsection{Model and notation}
Let $(X_t)_{t \in \Z}$ be a stationary time series with values in $\R^d$. To a given frequency $\theta \in [0,2\pi)$ the $(d \times d)$ spectral density matrix $F(\theta)$ is defined as the Fourier transform of the auto-covariance matrices
\begin{align}\label{Eq_Def_SpectralDensityMatrix}
	& F(\theta) = \sum\limits_{t \in \Z} e^{it\theta} \Cov\big[ X_0,X_t \big] \ .
\end{align}
For a sample $X_0,...,X_{n-1}$ of the time series and a fixed bandwidth $m \in \N$ the Daniell smoothed periodogram is defined as the average
\begin{align}\label{Eq_Def_Daniell}
	& S(\theta) \coloneq \frac{1}{2m+1} \sum\limits_{j=-m}^m I\Big( [\theta]_n + \frac{2\pi j}{n} \Big)
\end{align}
of periodograms
\begin{align}\label{Eq_Def_Periodogram}
	& I(\theta) \coloneq \frac{1}{n} \sum\limits_{t \in \Z} e^{it\theta} \sum\limits_{\substack{k,k' \in \{0,...,n-1\} \\ k'-k=t}} X_k X_k^\top \ ,
\end{align}
where $[\theta]_n \equiv \frac{2\pi}{n} \lfloor\frac{n \theta}{2\pi}+\frac{1}{2}\rfloor$ is the closest point to $\theta$ on the grid $\frac{2\pi}{n} \Z$.
\\
\\
Using the $(n \times n)$ fast Fourier transform matrix
\begin{align}\label{Eq_Def_Vn}
	& V \coloneq \frac{1}{\sqrt{n}} \big( e^{-2\pi i \frac{(k-1)(l-1)}{n}} \big)_{k,l \in \{1,...,n\}}
\end{align}
and the notation $r = r(\theta) = \lfloor\frac{n \theta}{2\pi}+\frac{1}{2}\rfloor$ such that $[\theta]_n \equiv \frac{2\pi r}{n}$,
we can write the Daniell smoothed periodogram as
\begin{align}\label{Eq_DaniellAsProduct}
	& S\Big( \frac{2\pi r}{n} \Big) = \frac{1}{2m+1} \bm{X} V D_r V^* \bm{X}^\top \ ,
\end{align}
where $\bm{X} = [X_0,...,X_{n-1}]$ is the $(d \times n)$ data-matrix with the samples $X_0,...,X_{n-1}$ as columns and $D_r$ is the $(n \times n)$ diagonal matrix of the form
\begin{align}\label{Eq_Def_Dr_imprecise}
	& D_r = \operatorname{diag}\big( 0,...,0,\underbrace{1,....}_{m \times},\hspace{-0.8cm}\overbrace{1}^{(r+1)\text{-th position}}\hspace{-0.8cm},\underbrace{....,1}_{m \times},0,....,0 \big) \in \R^{n \times n} \ .
\end{align}
More precisely, we define $(D_r)_{s,s} = \mathbbm{1}_{\rho_n(r,s-1) \leq m}$ for the modulo-$n$ distance
\begin{align}
	& \rho_n(r,r') \coloneq \dist(r,r'+n\Z) \ .
\end{align}
The following assumption appears in varying forms in \cite{Aue,StrongMP,bhattacharjee2016large,MikoschHeinyLinear}, since it generalizes popular auto regressive (AR) and multivariate auto regressive moving average (VARMA) models.
\begin{assumption}[Linear process]\thlabel{Assumption0_LinearProcess}\
\\
For matrices $(\Psi_k)_{k \in \N_0} \subset \R^{d \times d}$ and a sequence of independent $d$-dimensional innovations $(\eta_t)_{t \in \Z}$ such that each innovation vector $\eta_t$ has independent components with variance one, we assume the process $(X_t)_{t \in \Z}$ to have the form
\begin{align}
	& X_t = \sum\limits_{k=0}^\infty \Psi_k \eta_{t-k} \ .
\end{align}
In order to ensure the convergence of the right hand side as an $L^2$-limit and almost sure limit, we also assume
\begin{align*}
	& \sum\limits_{k=0}^\infty ||\Psi_k||^2 < \infty \ ,
\end{align*}
where $||\cdot||$ denotes the operator norm.
\end{assumption}\
\\
It is easily calculated that the spectral density matrix of $X_t$ as above will have the form
\begin{align}\label{Eq_SpectralDensity_LinearProcess}
	& F(\theta) = \bigg( \sum\limits_{k=0}^\infty e^{-ik\theta} \Psi_k \bigg) \bigg( \sum\limits_{k=0}^\infty e^{-ik\theta} \Psi_k \bigg)^* \ .
\end{align}
For notational convenience, we define
\begin{align}\label{Eq_Def_G}
	& G(\theta) \coloneq \sum\limits_{k=0}^\infty e^{-ik\theta} \Psi_k \in \C^{d \times d} \ .
\end{align}

\begin{remark}[Asymptotics in $n$]\
\\
In random matrix theory, high-dimensional behavior is studied by asymptotic results when the dimension $d$ grows with $n$. Similarly, in analysis of time series in the frequency domain, the bandwidth $m$ of the Daniell smoothed periodogram is assumed to grow with $n$, but still satisfying $m = o(n)$, in order to achieve consistency.
\\
We will thus write $d^{(n)}$ and $m^{(n)}$ to highlight the nature of the dimension and bandwidth as sequences in $n$ when it aids comprehension, while otherwise suppressing this relationship in our notation. Likewise, the process $(X_t)_{t \in \Z}$ and all the objects $F$, $S$, $I$, $V$, $\bm{X}$, $D_r$, $\Psi_k$, $\eta_t$ defined above also depend on $n$, sometimes only through their dependence on $d^{(n)}$, and we will highlight this when necessary by writing $X_t^{(n)}$, $F^{(n)}$, $S^{(n)}$, $I^{(n)}$, $V^{(n)}$, $\bm{X}^{(n)}$, $D_r^{(n)}$, $\Psi_k^{(n)}$ or $\eta_t^{(n)}$.
\end{remark}\
\\
Some further notation we will use is $||\cdot||$ for the operator norm of matrices, and
\begin{align}\label{Eq_Def_ESD}
	& \hat{\mu}_A \coloneq \frac{1}{d} \sum\limits_{j=1}^d \delta_{\lambda_j(A)}
\end{align}
for the empirical spectral distribution (ESD) of a Hermitian $(d \times d)$ matrix $A$. The complex upper half plane $\{z \in \C \mid \Im(z) > 0\}$ will be denoted as $\C^+$. The Stieltjes transform $\cs_\mu$ of a measure $\mu$ on $\R$ is defined as the map
\begin{align}\label{Eq_Def_SteiltjesTransform}
	& \cs_{\mu} : \C^+ \rightarrow \C^+ \ \ ; \ \ z \mapsto \int_\R \frac{1}{\lambda - z} \, d\mu(\lambda) \ .
\end{align}
The symbol $\bm{\eta}$ will stand for the $(d \times n)$ matrix $[\eta_0,...,\eta_{n-1}]$ containing a subset of the innovations $(\eta_t)_{t\in \Z}$ as its columns. For any odd $N \in \N$, the double factorial $N!!$ is defined as $N(N-2)\cdots1$. Unit vectors in $\R^d$ will be written as
\begin{align*}
	& \mathrm{u}_i^{(d)} \coloneq (\underbrace{0,...,0}_{\times(i-1)},1,0,...,0)^\top \in \R^d \ .
\end{align*}

\begin{definition}[Bounded Lipschitz metric]\thlabel{Def_BL_metric}\
\\
For any two probability measures $\mu_1,\mu_2$ on $(\R,\mathcal{B}(\R))$ the Bounded Lipschitz metric $d_{\operatorname{BL}}(\mu_1,\mu_2)$ is defined as
\begin{align}\label{Eq_Def_BL_distance}
	& d_{\operatorname{BL}}(\mu_1,\mu_2) \coloneq \sup\limits_{f \in \operatorname{Lip}_1} \bigg| \int_\R f \, d(\mu_1-\mu_2) \bigg| \ ,
\end{align}
where
\begin{align}\label{Eq_Def_Lip}
	& \operatorname{Lip}_a \coloneq \Big\{ f \in C(\R) \ \Big| \ \sup\limits_{x \in \R} |f(x)| \leq a , \ \sup\limits_{\substack{x,y \in \R \\ x \neq y}} \frac{|f(x)-f(y)|}{|x-y|} \leq a \Big\} \ .
\end{align}
\end{definition}\
\\
It is well known (Theorem 1.12.4 of \cite{wellner2013weak}) that $d_{\operatorname{BL}}$ metricizes weak convergence of probability measures.

\subsection{Marchenko-Pastur law}
The well-known Marchenko-Pastur law (\cite{MPOriginal,silverstein1995strong,StrongMP,silverstein1995analysis}) is formulated for i.i.d. samples as follows (see (1.4) of \cite{silverstein1995strong} or p.556 of \cite{BaiCLT}). Let $d=d(n)$ be a sequence in $\N$ that goes to infinity such that $\frac{d}{n} \xrightarrow{n \rightarrow \infty} c > 0$. For a sequence $(\Sigma_n)_{n \in \N}$ of $(d \times d)$ population covariance matrices that satisfy
\begin{align}\label{Eq_BasicMP_PopulationConvergence}
	& H_n \coloneq \hat{\mu}_{\Sigma_n} \xRightarrow{n \rightarrow \infty} H_\infty
\end{align}
for a probability measure $H_\infty$ on $[0,\infty)$ with compact support, let $\hat{\Sigma}_n = \frac{1}{n} \sum\limits_{j=1}^n Y_{j,n} Y_{j,n}^\top$ be sample covariance matrices, each constructed from i.i.d. centered $d$-dimensional samples $Y_{1,n},...,Y_{n,n}$ with $\Cov[Y_{1,n},Y_{1,n}] = \Sigma_n$.
The almost sure convergence of measures
\begin{align}\label{Eq_BasicMP_PopulationResult}
	& 1 = \bP\Big( \hat{\mu}_{\hat{\Sigma}_n} \xRightarrow{n \rightarrow \infty} \nu_\infty \Big)
\end{align}
holds, where the probability measure $\nu_\infty$ is uniquely defined by $H_\infty$ and $c$ through the following lemma.

\begin{lemma}[Marchenko-Pastur equation]\thlabel{Lemma_MPEquation}\
\\
For every $c>0$ and probability measure $H$ on $[0,\infty)$ with compact support there exists a probability measure $\nu$ on $[0,\infty)$ with compact support that is uniquely defined through the property
\begin{align}\label{Eq_MPEquation}
	& \forall z \in \C^+ : \ \cs_{\nu}(z) = \int_\R \frac{1}{\lambda(1-c-cz\cs_{\nu}(z))-z} \, dH(\lambda)
\end{align}
of its Stieltjes transform $\cs_{\nu}$.

\end{lemma}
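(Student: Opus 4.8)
I would establish uniqueness first — a contraction estimate valid for $|z|$ large — and then existence together with compactness of the support, via a fixed-point construction that is cleanest after passing to the companion Stieltjes transform.

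\textbf{Uniqueness.} Let $\nu_1,\nu_2$ be probability measures on $[0,\infty)$ whose Stieltjes transforms $\cs_{\nu_1},\cs_{\nu_2}$ both satisfy \eqref{Eq_MPEquation}, and fix $L<\infty$ with $\operatorname{supp}(H)\subseteq[0,L]$. From the universal bound $|\cs_\mu(z)|\le(\Im z)^{-1}$ one gets $|iy\,\cs_{\nu_i}(iy)|\le 1$, hence $|1-c-c\,iy\,\cs_{\nu_i}(iy)|\le|1-c|+c$, so for all $y$ larger than a constant depending only on $c$ and $L$ every denominator $\lambda(1-c-c\,iy\,\cs_{\nu_i}(iy))-iy$ occurring in \eqref{Eq_MPEquation} has modulus at least $y/2$ for $\lambda\in[0,L]$. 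Subtracting the two copies of \eqref{Eq_MPEquation} and using $\tfrac1A-\tfrac1B=\tfrac{B-A}{AB}$ gives
\begin{align*}
	\big(\cs_{\nu_1}(iy)-\cs_{\nu_2}(iy)\big)\Big(1-c\,iy\int_\R\frac{\lambda\,dH(\lambda)}{\big[\lambda(1-c-c\,iy\,\cs_{\nu_1}(iy))-iy\big]\big[\lambda(1-c-c\,iy\,\cs_{\nu_2}(iy))-iy\big]}\Big)=0,
\end{align*}
and the integral factor has modulus at most $4cL/y<1$ for $y$ large, so the second bracket cannot vanish; hence $\cs_{\nu_1}(iy)=\cs_{\nu_2}(iy)$ for all large $y$, so $\cs_{\nu_1}\equiv\cs_{\nu_2}$ on $\C^+$ by the identity theorem, so $\nu_1=\nu_2$ by Stieltjes inversion. (Only boundedness of $\operatorname{supp}(H)$ is used here, not of the $\nu_i$.)

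\textbf{Existence.} I would reparametrize: a holomorphic $\cs_\nu$ on $\C^+$ solves \eqref{Eq_MPEquation} if and only if $\underline{\cs}\coloneq c\,\cs_\nu-(1-c)/z$ solves the companion equation
\begin{align*}
	z=-\frac{1}{\underline{\cs}(z)}+c\int_\R\frac{\lambda\,dH(\lambda)}{1+\lambda\,\underline{\cs}(z)},
\end{align*}
which holds because under this substitution $1-c-cz\,\cs_\nu$ simplifies to $-z\,\underline{\cs}$. Fix $z\in\C^+$ and put $g_z(w)\coloneq-\big(z-c\int_\R\tfrac{\lambda\,dH(\lambda)}{1+\lambda w}\big)^{-1}$. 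Using $\lambda\ge 0$ on $\operatorname{supp}(H)$, $\Im\big(z-c\int_\R\tfrac{\lambda\,dH(\lambda)}{1+\lambda w}\big)=\Im z+c\int_\R\tfrac{\lambda^2\,\Im w}{|1+\lambda w|^2}\,dH(\lambda)>\Im z>0$ for every $w\in\C^+$, so $g_z$ is a holomorphic self-map of $\C^+$; not being the identity it has at most one fixed point there by the Schwarz--Pick lemma, and for $|z|$ large it maps a small disk around $w=-1/z\in\C^+$ into itself and contracts it, supplying one. Propagating this fixed point over all of $\C^+$ — by the Denjoy--Wolff theorem for holomorphic self-maps of the disk, or by the inverse-function analysis of the companion equation as in \cite{silverstein1995analysis} — yields a holomorphic $\underline{\cs}\colon\C^+\to\C^+$ solving it, with $\lim_{y\to\infty}(-iy)\,\underline{\cs}(iy)=1$. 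By the Nevanlinna (Herglotz) representation theorem $\underline{\cs}=\cs_{\underline\nu}$ for a probability measure $\underline\nu$ on $\R$, and a routine analysis of the companion equation on $(-\infty,0)$ and near $z=\infty$ (where $1+\lambda\underline{\cs}$ stays bounded away from $0$ and the inverse function theorem applies) shows $\operatorname{supp}(\underline\nu)$ is a compact subset of $[0,\infty)$. Reversing the substitution, $\nu\coloneq\tfrac1c\,\underline\nu|_{(0,\infty)}+\big(1-\tfrac1c\,\underline\nu((0,\infty))\big)\delta_0$ is a compactly supported probability measure on $[0,\infty)$ with Stieltjes transform $\tfrac1c\underline{\cs}+\tfrac{1-c}{cz}$, and hence satisfies \eqref{Eq_MPEquation}. (Alternatively, existence with compact support follows from the proof of the Marchenko--Pastur convergence theorem \eqref{Eq_BasicMP_PopulationResult} in \cite{silverstein1995strong}, applied to a deterministic sequence $\Sigma_n$ with $\hat\mu_{\Sigma_n}\Rightarrow H$: it yields an almost sure limit $\nu$ of $\hat\mu_{\hat\Sigma_n}$ whose Stieltjes transform solves \eqref{Eq_MPEquation}, supported on $[0,\infty)$ since $\hat\Sigma_n\succeq 0$ and compactly supported since $\|\hat\Sigma_n\|$ is almost surely bounded.)

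\textbf{Main obstacle.} The delicate step is existence: that the fixed-point problem for $g_z$ is solvable for \emph{every} $z\in\C^+$ (not just for $|z|$ large), that these solutions assemble into a single-valued holomorphic function of $z$, and that this function is the Stieltjes transform of a measure with nonnegative, compactly supported spectrum. The companion substitution is essential rather than cosmetic: on \eqref{Eq_MPEquation} itself the analogous map fails to preserve $\C^+$ once $c>1$, and repairing this is exactly what opens the door to the Schwarz--Pick and Nevanlinna arguments.
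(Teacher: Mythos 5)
The paper itself contains no proof of this lemma: it is the classical Silverstein/Marchenko--Pastur existence-and-uniqueness statement, quoted with a pointer to (1.4) of \cite{silverstein1995strong} and to \cite{BaiCLT}, so your argument is being compared against a citation rather than a written proof. Your uniqueness half is correct and complete: $|\cs_{\nu_i}(iy)|\le 1/y$ makes every denominator in \eqref{Eq_MPEquation} of modulus at least $y/2$ for $\lambda\in[0,L]$ once $y$ is large, subtracting the two copies of \eqref{Eq_MPEquation} gives the factorized identity you display, the bracketed factor cannot vanish because the integral term is of size $O(cL/y)$, and the identity theorem plus Stieltjes inversion finish it; only compactness of $\operatorname{supp}(H)$ is used, as you say. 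The companion substitution, the computation showing $1-c-cz\cs_\nu=-z\,\underline{\cs}$, the verification that $g_z$ maps $\C^+$ into itself, the at-most-one-fixed-point claim via Schwarz--Pick, and the contraction for large $|z|$ are likewise all correct.

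The soft spot is the sentence in which you propagate the fixed point to all of $\C^+$ ``by the Denjoy--Wolff theorem'': Denjoy--Wolff does not produce an interior fixed point --- for a given $z$ the Denjoy--Wolff point of $g_z$ could a priori lie on the boundary of $\C^+$, which is exactly what must be excluded --- and it also does not show that the pointwise fixed points assemble into a holomorphic function of $z$. A standard way to close this within your framework is to iterate: set $h_0(z)=-1/z$ and $h_{k+1}(z)=g_z(h_k(z))$; each $h_k$ is a holomorphic self-map of $\C^+$, the sequence converges for $|z|$ large by your contraction estimate, hence by Montel and Vitali it converges locally uniformly on all of $\C^+$, and the limit is again a self-map of $\C^+$ (it is not a real constant, by its behaviour as $|z|\to\infty$) and satisfies the companion equation; alternatively an open--closed argument works, using $|g_z'(w^\ast)|<1$ at an interior fixed point for openness and the a priori bound $|w|\le 1/\Im(z)$ for closedness. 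Two smaller glossed points: the claim $\operatorname{supp}(\underline{\nu})\subset[0,\infty)$ compact, and the nonnegativity of the mass you assign to $\delta_0$ when reversing the substitution (i.e. $\underline{\nu}((0,\infty))\le c$), each deserve a line rather than ``routine analysis''. Your parenthetical fallback --- extracting existence, nonnegativity and compactness of the support from the convergence theorem of \cite{silverstein1995strong} applied to a deterministic sequence with $\hat{\mu}_{\Sigma_n}\Rightarrow H$ --- is legitimate and is in effect what the paper does by citation; with that fallback, or with the Vitali step spelled out, the proposal is sound.
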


\subsection{Independent components and simultaneous diagonalizability}
Previous papers on Marchenko-Pastur laws and spectral CLTs of smoothed periodograms, such as \cite{Loubaton} and \cite{LoubatonRosuelNew}, work under the assumption that the $d$-dimensional process $(X_t)_{t \in \N}$ is Gaussian and that the $d$-many components $\big\{((X_t)_j)_{t \in \Z}\big\}_{j \leq d}$ are independent. The papers \cite{Aue} and \cite{StrongMP} on Marchenko-Pastur laws for sample auto-covariance matrices assume $(X_t)_{t \in \N}$ to be a linear process, where the matrices $(\Psi_k)_{k_0 \in \N}$ are Hermitian and simultaneously diagonalizable, i.e. there exists a unitary matrix $U \in U(d)$ such that $U\Psi_kU^*$ is diagonal for all $k \in \N$. In the Gaussian case, this assumption of simultaneous diagonalizability is equivalent to the process $(U X_t)_{t \in \Z}$ having independent components. For the spectral density matrix $F(\theta)$ these assumptions translate to $F(\theta)$ being diagonal or simultaneously diagonalizable over all $\theta \in [0,2\pi)$.
\\
\\
For sample auto-covariance matrices, the paper \cite{bhattacharjee2016large} derives limiting spectral distributions in the high-dimensional setting even without the assumption of simultaneous diagonalizability. The limiting spectral distributions are described on a theoretical level by their integrals over certain polynomials.
\\
\\
In this paper, the Marchenko-Pastur law for Daniell smoothed periodogram is shown to hold even without assuming that $(\Psi_k)_{k_0 \in \N}$ are simultaneously diagonalizable (or even Hermitian). Since the Marchenko-Pastur law was in \cite{bhattacharjee2016large} shown to be unstable under generalized non simultaneously diagonalizable $(\Psi_k)_{k_0 \in \N}$ for sample auto-covariance matrices, it may seem surprising that it is stable for Daniell smoothed periodograms.

\section{Main results}

\begin{assumption}\thlabel{Assumption_Main}\
\begin{itemize}
	\item[A1)] \textit{Asymptotics of $d$ and $m$:}\\
	Given an $\alpha \in (0,1)$ let $(d^{(n)})_{n \in \N}$ and $(m^{(n)})_{n \in \N}$ be sequences in $\N$ satisfying $d \asymp n^{\alpha} \asymp m$, which means there exists a constant $\cK_1 > 0$ such that
	\begin{align}\label{Eq_Assumption_Asymptotics}
		& \frac{1}{\cK_1} \leq \frac{d}{n^{\alpha}} \leq \cK_1 \ \ \text{ and } \ \ \frac{1}{\cK_1} \leq \frac{m}{n^{\alpha}} \leq \cK_1
	\end{align}
	for all $n \in \N$. Furthermore, assume the existence of a constant $c>0$ such that
	\begin{align}\label{Eq_Assumption_dmConvergence}
		& \frac{d}{m} \xrightarrow{n \rightarrow \infty} c \ .
	\end{align}
	
	\item[A2)] \textit{Limiting long-range dependence:}\\
	For $(\Psi_k)_{k \in \N_0}$ and $(\eta_t)_{ t\in \Z}$ as in Assumption \ref{Assumption0_LinearProcess}, suppose there is a constant $\cK_2>0$ independent of $n$ such that
	\begin{align}\label{Eq_Assumption_LongRandeDependence_cK2}
		& ||\Psi_0^{(n)}|| + \sum\limits_{k=1}^\infty k ||\Psi_k^{(n)}|| \leq \cK_2
	\end{align}
	and a constant $\gamma > 1$ with
	\begin{align}\label{Eq_Assumption_LongRandeDependence_gamma}
		& \forall K \in \N : \ \sum\limits_{k=K}^\infty ||\Psi_k^{(n)}|| \leq \cK_2 \, K^{-\gamma} \ .
	\end{align}
	
	\item[A3)] \textit{Convergence of the population spectral distribution:}\\
	Suppose that for every $\theta \in [0,2\pi)$ there exists a probability distribution $H_\infty(\theta) \neq \delta_0$ with compact support on $[0,\infty)$ such that the weak convergence
	\begin{align}\label{Eq_Assumption_ESD_Convergence}
		& H_n(\theta) \coloneq \hat{\mu}_{F^{(n)}(\theta)} \xRightarrow{n \rightarrow \infty} H_\infty(\theta) 
	\end{align}
	holds for all $\theta \in [0,2\pi)$.
	
	\item[A4)] \textit{Universality requirements:}\\
	Suppose that all moments of the innovations exist and are uniformly bounded in the sense
	\begin{align}\label{Eq_Assumption_InnovationMoments}
		& \forall p \in \N \, \exists C_p>0 : \ \sup\limits_{n \in \N} \sup\limits_{t \in \Z} \max\limits_{i \leq d} \E\big[|(\eta_{t,n})_i|^p\big] \leq C_p \ .
	\end{align}
	And for the constants $\alpha \in (0,1)$ from (\ref{Eq_Assumption_Asymptotics}) and $\gamma>1$ from (\ref{Eq_Assumption_LongRandeDependence_gamma}) we further require
	\begin{align}\label{Eq_Assumption_UniversalityRequirement}
		& \alpha > \frac{1}{2} \ \ \text{ and } \ \ \gamma > \frac{1}{3 \min(\frac{1-\alpha}{2},\alpha-\frac{1}{2})} \ .
	\end{align}
\end{itemize}
\end{assumption}

\begin{remark}[Discussion of assumptions]\
\\
The Marchenko-Pastur law for sample covariance matrices is usually formulated under the assumption that $d$ and $n$ go to infinity simultaneously such that $\frac{d}{n} \rightarrow c$ for some $c>0$. In our case, the bandwidth $m$ takes the role of $n$. In time series analysis, the bandwidth $m$ is usually chosen with $m \asymp n^{\alpha}$ for some $\alpha \in (0,1)$, which leads us directly to (A1).\\
\\
The papers \cite{Aue,StrongMP,bhattacharjee2016large} examine a linear process model for $(X_t)_{t \in \N}$ and must also limit the long-range dependence by postulations on the behavior of $\Psi_k$ for large $k$. Our (A2) is most similar to
\begin{align*}
	& \sum\limits_{k=1}^\infty k \, \sup\limits_{n \in \N}||\Psi_k^{(n)}|| < \infty \ \ \ \ \text{(see Assumption 2.1 (a) of \cite{Aue})} \ .
\end{align*}
Corresponding assumptions in the other two papers range from much more restrictive (see (A3) of \cite{bhattacharjee2016large}) to somewhat weaker (see (3.10) of \cite{StrongMP}).\\
\\
Assumption (A3) is analogous to (\ref{Eq_BasicMP_PopulationConvergence}), where instead of the convergence of the population ESD, the convergence of the ESD of the spectral density matrix $F(\theta)$ is required at every frequency $\theta \in [0,2\pi)$.\\
\\
If the innovations $(\eta_t)_{t \in \Z}$ are Gaussian, then (A4) is not needed. This rather restrictive assumption is a consequence of universality methods for the Marchenko-Pastur law (in particular Sections 9 and 10 of \cite{Aue} or Section A.3 of \cite{StrongMP}) not being suited to the dimensions of the matrices ($d$ and $m$ in our case) growing slower than the index $n$ over which the asymptotics are taken. This, together with the fact that our Marchenko-Pastur law (Theorem \ref{Thm_MarchenkoPastur_Daniell}) is uniform in $\theta \in [0,2\pi)$, makes the strong assumption $\alpha > \frac{1}{2}$ necessary for our methods.
For weaker versions of the Marchenko-Pastur law, which do not give almost sure convergence, it may be possible to weaken the requirement (\ref{Eq_Assumption_UniversalityRequirement}). The moment assumption (\ref{Eq_Assumption_InnovationMoments}) is likely not optimal and might be adaptable to only require moments up to an exponent depending on the choice of $\alpha$ similar to the observations made in \cite{MikoschHeinyLinear}. We believe these generalizations would exceed the scope of this paper.
\end{remark}\
\\
We are now ready to formulate the main result of this paper.

\begin{theorem}[Marchenko-Pastur law for the Daniell smoothed periodogram]\thlabel{Thm_MarchenkoPastur_Daniell}\
\\
Suppose Assumptions \ref{Assumption0_LinearProcess} and \ref{Assumption_Main} hold, where (A4) may be dropped, if the innovations $(\eta_t)_{t \in \Z}$ are Gaussian. For each frequency $\theta \in [0,2\pi)$ let $\nu_\infty(\theta)$ be the probability measure defined as in Lemma \ref{Lemma_MPEquation} through $H_\infty(\theta)$ and $c$.
\\
\\
With probability one, for all $\theta \in [0,2\pi)$ the weak convergence of measures $$\hat{\mu}_{S(\theta)} \xRightarrow{n \rightarrow \infty} \nu_\infty(\theta)$$ holds.
\end{theorem}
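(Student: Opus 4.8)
The plan is to reduce the problem, step by step, to the classical Marchenko--Pastur law for sample covariance matrices with an arbitrary population covariance, \thref{Lemma_MPEquation}, and to upgrade pointwise-in-$\theta$ convergence to uniform almost sure convergence via a net argument and the Lipschitz-in-$\theta$ behaviour of the relevant quantities. Throughout we use the representation \eqref{Eq_DaniellAsProduct}: writing $S(\tfrac{2\pi r}{n}) = \tfrac{1}{2m+1}\bm X V D_r V^* \bm X^\top$, and plugging in the linear-process form $X_t = \sum_k \Psi_k \eta_{t-k}$, one sees that $\bm X V = G_n \bm\eta V + (\text{edge terms})$ in an appropriate sense, where $G_n$ acts (approximately) frequency-by-frequency through $G(\theta)$ from \eqref{Eq_Def_G}. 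Since $D_r$ picks out a window of $2m+1$ Fourier frequencies clustered around $\theta$, on that window $G_n$ is, up to a controlled error governed by Assumption (A2) (the $\sum k\|\Psi_k\|$ and the $K^{-\gamma}$ tail bounds), close to the constant matrix $G(\theta)$. Hence, after these approximations, $S(\theta)$ is comparable in ESD to $\tfrac{1}{2m+1} G(\theta)\, \bm\eta V D_r V^* \bm\eta^\top G(\theta)^*$, and $V D_r V^*$ restricted to its range is (up to rotation) a projection of rank $2m+1$; combining it with $\bm\eta$ one gets a matrix of the form $\tfrac{1}{2m+1}\sum_{j=1}^{2m+1} Y_j Y_j^*$ with the $Y_j$ built from linear combinations of the i.i.d.\ innovation columns, i.e.\ a sample covariance matrix with $2m+1$ samples, dimension $d$, ratio $d/(2m+1)\to c/2$ wait --- more precisely $d/m \to c$, and population covariance asymptotically $F(\theta) = G(\theta)G(\theta)^*$, whose ESD converges to $H_\infty(\theta)$ by (A3).

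Concretely I would proceed in the following steps. \emph{Step 1 (Gaussian core case).} Assume the innovations Gaussian. Then $\bm\eta V$ has i.i.d.\ complex Gaussian entries (up to the real/complex bookkeeping at frequencies $0$ and $\pi$, which contribute $O(1/d)$ to the ESD and are negligible), so after the reduction above one is exactly in the setting of the Marchenko--Pastur theorem with a general population matrix; one invokes the version of \eqref{Eq_BasicMP_PopulationResult} with $n$ replaced by $2m+1$ and population covariance $F^{(n)}(\theta)$, getting $\hat\mu_{S(\theta)} \Rightarrow \nu_\infty(\theta)$ a.s.\ for each fixed $\theta$. \emph{Step 2 (controlling the linear-process and edge errors).} Quantify the difference between $S(\theta)$ and the idealized matrix $\tfrac{1}{2m+1} G(\theta)\bm\eta V D_r V^*\bm\eta^\top G(\theta)^*$ in, say, the metric $d_{\mathrm{BL}}(\hat\mu_{\cdot},\hat\mu_{\cdot})$ via a rank/Hoffman--Wielandt-type inequality together with operator-norm and Hilbert--Schmidt-norm bounds on the error matrix; here Assumption (A2), and the scaling $d\asymp m \asymp n^\alpha$, are used to show this error $\to 0$ a.s. The tail exponent $\gamma$ enters in bounding how far $G_n$ deviates from $G(\theta)$ across a band of width $m \asymp n^\alpha$ of the $n$ Fourier frequencies. \emph{Step 3 (universality, removing Gaussianity).} Under (A4), replace the general innovations by Gaussian ones by a Lindeberg/moment-matching swap applied to the quadratic form in \eqref{Eq_DaniellAsProduct}; this is where the moment bounds \eqref{Eq_Assumption_InnovationMoments} and the quantitative requirement \eqref{Eq_Assumption_UniversalityRequirement} on $(\alpha,\gamma)$ are consumed, because, as the authors note, the usual universality machinery is not tuned to matrix dimension growing like $n^\alpha$ rather than $n$. \emph{Step 4 (uniformity in $\theta$).} Show $\theta \mapsto S(\theta)$ and $\theta\mapsto \nu_\infty(\theta)$ vary slowly: $[\theta]_n$ takes only $n$ values, and moving $r\mapsto r+1$ changes $D_r$ in exactly two diagonal entries, hence changes $\hat\mu_{S}$ by $O(1/d)$ in $d_{\mathrm{BL}}$; on the other side, $H_\infty(\theta)$ (hence $\nu_\infty(\theta)$ via the continuity of the MP map in \thref{Lemma_MPEquation}) is shown to be continuous in $\theta$ using (A2). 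Therefore it suffices to have a.s.\ convergence simultaneously over the $n$-point grid $\tfrac{2\pi}{n}\{0,\dots,n-1\}$, and since $d\asymp n^\alpha$, a Borel--Cantelli argument needs the per-$\theta$ convergence to come with stretched-exponential (or at least summable polynomial) probability bounds --- which is exactly why $\alpha>\tfrac12$ and the strengthened moment/$\gamma$ conditions are imposed. Combining Steps 1--4 gives the theorem.

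The main obstacle is Step 4 combined with Step 3: turning the fixed-$\theta$, fixed-realization Marchenko--Pastur convergence into an almost sure statement \emph{uniform} over all $\theta$, under the non-classical scaling $d\asymp m\asymp n^\alpha$. One needs concentration inequalities for $d_{\mathrm{BL}}(\hat\mu_{S(\theta)},\nu_\infty(\theta))$ with tails summable after a union bound over $\asymp n$ grid points, and the universality swap must be performed \emph{with rates} rather than just in distribution; both are delicate precisely because the matrix size $d$ is much smaller than $n$, so fluctuations relative to the $\asymp n$-fold union are comparatively large. Handling the non-simultaneously-diagonalizable $\Psi_k$ is, by contrast, not an obstacle here: unlike for sample auto-covariance matrices, the Daniell average localizes in frequency and only ever sees the single Hermitian matrix $F(\theta)=G(\theta)G(\theta)^*$, which is why the MP law survives even though \cite{bhattacharjee2016large} shows it fails for auto-covariances.
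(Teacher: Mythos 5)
Your high-level architecture coincides with the paper's (reduce to the Gaussian case and to the idealized matrix $\tfrac{1}{2m+1}G(\theta)\bm\eta V D_r V^*\bm\eta^\top G(\theta)^*$, control the linear-process/edge errors, remove Gaussianity by a Lindeberg swap, and get uniformity in $\theta$ from the $n$-point grid plus a union bound), but as it stands the plan has a genuine gap: you name the two decisive difficulties and do not supply the tools that resolve them. First, your Step 1 invokes the classical Marchenko--Pastur theorem (the analogue of (\ref{Eq_BasicMP_PopulationResult}) with $2m+1$ samples), which gives almost sure convergence for a fixed frequency but no probability rate; it therefore cannot be fed into the union bound over the $\asymp n$ grid frequencies and Borel--Cantelli that your Step 4 requires. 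The paper replaces it by a local law for Gram matrices (Lemma \ref{Lemma_LocalLaw}, imported from \cite{GramErdos}), which yields error probabilities $\leq C n^{-D}$ for arbitrary $D$, uniformly over all $r<n$, and this is precisely what survives the union bound under $d\asymp n^{\alpha}$ with $\alpha$ possibly small. Second, in Step 2 you propose ``rank/Hoffman--Wielandt-type inequalities together with operator-norm and Hilbert--Schmidt-norm bounds'' for the error matrices; but these errors (the wrap-around/neighboring error and the error from replacing $G(\tfrac{2\pi(s-1)}{n})$ by $G(\tfrac{2\pi r}{n})$ across the window) are full-rank, delocalized Gaussian matrices, and second-moment or Frobenius bounds give tails far too weak to be summable after the union over $n$ frequencies. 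The paper's solution is its main technical novelty, the trace moment bound for Gaussian Gram matrices (Theorem \ref{Thm_TraceMomentBound}), which gives $\E[\tr((\bm Y\bm Y^\top)^L)] \leq \kappa^L (2L-1)!!\,(d+M)^{L+1}$ for arbitrary $L$ and hence, via Markov, polynomially small tails of any prescribed order (Lemmas \ref{Lemma_ApproxNeighboring} and \ref{Lemma_ApproxContinuity}, combined in Proposition \ref{Prop_ApproxSimpler}); this is also exactly where the non-simultaneous-diagonalizability of the $\Psi_k$ is absorbed. Without these two quantitative inputs (or substitutes of comparable strength), your Steps 1, 2 and 4 do not fit together.

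Two smaller points: your universality Step 3 is only gestured at; the paper's version needs three distinct components (a Lindeberg comparison of \emph{expected} Stieltjes transforms with third-derivative bounds via Lemmas \ref{Lemma_BoundingStieltjesDerivatives} and \ref{Lemma_CrudeTraceMomentBound}, a McDiarmid concentration step made possible by truncating the linear process at $K=\lfloor n^{\min(\frac{1-\alpha}{2},\alpha-\frac12)-\delta}\rfloor$ so that changing one innovation is a rank-$O(K)$ perturbation, and a separate moment bound for the truncation error), and it is only in this decomposition that (\ref{Eq_Assumption_UniversalityRequirement}) is actually consumed. Also, the continuity-in-$\theta$ argument you sketch for $\nu_\infty(\theta)$ is not needed: since $S(\theta)=S([\theta]_n)$ by definition, uniformity reduces to the grid automatically, and the comparison of $\nu_n([\theta]_n)$ with $\nu_\infty(\theta)$ is handled through (A3) at the fixed $\theta$ together with the elementary estimate (\ref{Eq_G_Continuity}).
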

\
\\
The following result plays a role in the proof of Theorem \ref{Thm_MarchenkoPastur_Daniell} and is likely to have broader applications in the field of high-dimensional time series.

\begin{theorem}[Trace moment bound for Gaussian Gram matrices]\thlabel{Thm_TraceMomentBound}\
\\
Let $\bm{Y}$ be a $(d \times M)$ centered real-valued Gaussian matrix. Define the $(d \times d)$ auto-covariance matrices
\begin{align}\label{Eq_Def_AutoCov}
	& A_{s,s'} = \E\big[ \bm{Y}_{\bullet,s} \bm{Y}_{\bullet,s'}^\top \big] \ .
\end{align}
Suppose the symmetric $(M \times M)$-matrix $B = \big( ||A_{s,s'}|| \big)_{s,s' \leq M}$ satisfies
\begin{align}\label{Eq_BNormCondition}
	& ||B|| \leq \kappa
\end{align}
for some $\kappa > 0$, then for all $L\in \N$ the bound
\begin{align}\label{Eq_TraceMomentBound}
	& \E\Big[ \tr\Big( \big( \bm{Y} \bm{Y}^\top \big)^L \Big) \Big] \leq \kappa^L (2L-1)!! \, (d+M)^{L+1}
\end{align}
holds.
\end{theorem}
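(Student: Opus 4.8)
The plan is to expand the trace moment combinatorially and control it via Wick's theorem (Isserlis' formula), reducing the estimate to a counting problem over pairings that can be encoded by closed walks on a graph. Writing out the trace, we have
\[
\tr\Big( \big( \bm{Y} \bm{Y}^\top \big)^L \Big) = \sum_{i_1,\dots,i_L} \sum_{s_1,\dots,s_L} \bm{Y}_{i_1,s_1} \bm{Y}_{i_2,s_1} \bm{Y}_{i_2,s_2} \bm{Y}_{i_3,s_2} \cdots \bm{Y}_{i_L,s_L} \bm{Y}_{i_1,s_L},
\]
where the row indices $i_1,\dots,i_L$ range over $\{1,\dots,d\}$ and the column indices $s_1,\dots,s_L$ over $\{1,\dots,M\}$. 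Taking expectations and applying Wick's theorem, $\E$ of this product of $2L$ centered jointly Gaussian variables equals the sum over all perfect matchings $\pi$ of the $2L$ factors of the product of pairwise covariances $\E[\bm{Y}_{a,s}\bm{Y}_{b,t}] = (A_{s,t})_{a,b}$. There are $(2L-1)!!$ such matchings, so it suffices to show that for each fixed matching $\pi$,
\[
\sum_{i_1,\dots,i_L}\ \sum_{s_1,\dots,s_L}\ \prod_{\{(a,s),(b,t)\}\in\pi} \big|(A_{s,t})_{a,b}\big| \ \leq\ \kappa^L (d+M)^{L+1}.
\]

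The key step is to bound, for a fixed matching, the sum over the $i$- and $s$-indices. I would first perform the summation over the row indices $i_1,\dots,i_L$: each $i_\ell$ appears in exactly two of the $2L$ factors (the two factors sharing the column index $s_{\ell-1}$ on one side and $s_\ell$ on the other, i.e.\ $\bm{Y}_{i_\ell,s_{\ell-1}}$ and $\bm{Y}_{i_\ell,s_\ell}$), and the matching $\pi$ pairs these $2L$ factors up; summing a product $\prod_{a,b}|(A)_{a,b}|$ of matrix entries over shared indices along the cycles/paths induced by $\pi$ on the row labels produces a product of traces / operator-norm bounds of products of the matrices $A_{s,t}$, and using $|\tr(C_1\cdots C_r)| \le d\,\|C_1\|\cdots\|C_r\|$ together with submultiplicativity of the operator norm replaces each $\|A_{s,t}\|$ by the corresponding entry $B_{s,t}$, at the cost of a factor $d$ for each cycle in the row-index structure. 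This leaves a sum of the shape $d^{(\#\text{row cycles})}\sum_{s_1,\dots,s_L} \prod B_{s_{\ell},s_{\ell'}}$, a product of entries of $B$ over a graph on the column vertices $\{s_1,\dots,s_L\}$ whose edge multiset is determined by $\pi$; summing this over $s_1,\dots,s_L \in \{1,\dots,M\}$ is bounded, using $\|B\|\le\kappa$ and the fact that each vertex has degree two (so the graph is a disjoint union of cycles), by $\kappa^L M^{(\#\text{column cycles})}$. Finally one checks the Euler-type bookkeeping: the total number of cycles across the row-graph and column-graph is at most $L+1$ for every matching $\pi$ (it is maximized, equal to $L+1$, exactly for the "planar/non-crossing" matchings that give the Catalan-type leading term, and is smaller otherwise), so $d^{(\#\text{row cycles})} M^{(\#\text{column cycles})} \le (d+M)^{L+1}$ after bounding each factor of $d$ or $M$ crudely by $d+M$; summing over the $(2L-1)!!$ matchings gives the claim.

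The main obstacle I expect is the cycle-counting / topological bookkeeping: making precise the bijection between a Wick matching $\pi$ and the pair of graphs (on row labels and on column labels), verifying that in both graphs every vertex has degree exactly two so that each is a disjoint union of cycles, and proving the uniform bound $(\#\text{row cycles}) + (\#\text{column cycles}) \le L+1$. This is essentially the genus expansion underlying Wigner/Marchenko–Pastur combinatorics, and the cleanest route is probably to associate to $(\pi,\text{index walk})$ a gluing of $L$ polygons into a surface and invoke the Euler-characteristic inequality $V - E + F \le 2$, but one must be careful that here we have a bipartite-type structure (alternating row and column contractions) rather than the single-colour Wigner case, so the precise statement to aim for is that the number of faces of the resulting (possibly non-orientable, possibly disconnected) surface is at most $L+1$. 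Once that inequality is in hand, the rest is routine: a clean application of Wick's theorem, submultiplicativity $\|A_{s,t}\| = B_{s,t}$, the trace bound $|\tr(\cdot)|\le d\|\cdot\|$, the operator-norm bound $\sum_{s,t} B_{s,t} x_s x_t \le \kappa\|x\|^2$ for summing over column cycles, and the crude estimate $d^a M^b \le (d+M)^{a+b}$.
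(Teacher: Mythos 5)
Your proposal follows essentially the same route as the paper's proof: Wick expansion of the trace, decomposition of each pairing into row- and column-cycles, von Neumann's trace bound giving a factor $d$ per row-cycle and $\tr(B^{\#\zeta/2})\le M\kappa^{\#\zeta/2}$ per column-cycle, and the key count $\#C_r(\bm{\pi})+\#C_c(\bm{\pi})\le L+1$. The cycle-count inequality you flag as the main obstacle is exactly what the paper settles by the polygon-gluing/ribbon-graph observation (gluing the edges of a single $2L$-gon in pairs leaves at most $L+1$ fused vertices, i.e. the Euler-characteristic bound you sketch), so your outlined argument is the intended one.
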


\begin{remark}[Application of the trace moment bound]\
\\
Theorem \ref{Thm_TraceMomentBound} may be used to bound the effect of errors that are localized in neither columns nor rows. It is the reason why we do not require assumptions on the simultaneous diagonalizability of $(\Psi_k)_{k \in \N_0}$. A standard application would use the Markov bound
\begin{align*}
	& \bP\big( ||\bm{Y}\bm{Y}^\top|| \geq \varepsilon \big) \leq \frac{1}{\varepsilon^L} \E\big[ ||\bm{Y}\bm{Y}^\top||^L \big] \leq \frac{1}{\varepsilon^L} \E\big[ \tr\big( (\bm{Y}\bm{Y}^\top)^L \big) \big]
\end{align*}
and then (\ref{Eq_TraceMomentBound}) to bound the effect of some error $\bm{Y}$. The bound clearly becomes sharper with larger $L \in \N$. By Gershgorin's circle theorem, the assumption (\ref{Eq_BNormCondition}) is satisfied, if
\begin{align*}
	& \forall s \leq M : \ \sum\limits_{s'=1}^M ||A_{s,s'}|| \leq \kappa \ .
\end{align*}
Since $A_{s,s'}$ are the auto-covariance matrices of the columns, this bound may be interpreted as a mixing condition for $d$-dimensional (Gaussian) time series.
\end{remark}

\section{Proof of Theorem \ref{Thm_TraceMomentBound}}
The proof relies heavily on the assumption of Gaussianity and the well-known Wick's formula (\cite{isserlis1918formula,munthe2025short,alberts2018calculus,WienerChaos}) in the formulation of the following lemma.
\begin{lemma}[Wick's formula]\thlabel{Lemma_WicksFormula}\
	\\
	Let $\Pi(2L)$ denote the set of partitions $\bm{\pi}$ of the set $\{1,...,2L\}$, i.e.
	\begin{align}\label{Eq_DefPartitions}
		& \Pi(2L) \coloneq \bigg\{ \bm{\pi} \subset \mathcal{P}(\{1,...,2L\}) \ \bigg| \ \emptyset \notin \bm{\pi} \ , \ \ \bigcup\limits_{A \in \bm{\pi}} A = \{1,...,2L\} \ , \nonumber\\
		& \hspace{6cm} \forall A\neq A' \in \bm{\pi} : \, A \cap A' = \emptyset \bigg\} \ .
	\end{align}
	Further, we call a partition $\bm{\pi} \in \Pi(2L)$ a pairing, if all sets in $\bm{\pi}$ have cardinality $2$. Let
	\begin{align}\label{Eq_DefPairing}
		& \Pi_2(2L) \coloneq \{ \bm{\pi} \in \Pi(2L) \mid \forall A \in \bm{\pi} : \, \#A=2 \}
	\end{align}
	denote the set of pairings of $\{1,...,2L\}$.\\
	\\
	For any random variables $Y_1,...,Y_{2L}$, which jointly follow a centered (possibly degenerate) complex Gaussian distribution, the formula
	\begin{align}\label{Eq_MeanByKumulantPairings}
		& \E\big[ Y_1 \cdots Y_{2L} \big] = \sum\limits_{\bm{\pi} \in \Pi_2(2L)} \prod\limits_{\{a,b\} \in \bm{\pi}} \E[Y_a Y_b]
	\end{align}
	holds.
\end{lemma}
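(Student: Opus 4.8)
The plan is to prove the identity by comparing Taylor coefficients of the exponential moment generating function at the origin. Fix $t = (t_1,\dots,t_{2L}) \in \C^{2L}$ and set $Z(t) \coloneq \sum_{a=1}^{2L} t_a Y_a$. The first step is to establish
\begin{align*}
	& \E\big[ \exp(Z(t)) \big] = \exp\!\Big( \tfrac{1}{2}\, \E\big[ Z(t)^2 \big] \Big) = \exp\!\Big( \tfrac{1}{2} \sum_{a,b=1}^{2L} t_a t_b\, \E[Y_a Y_b] \Big) \ .
\end{align*}
For real-valued, non-degenerate $Y_a$ and real $t$ this is the classical Gaussian Laplace-transform formula. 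I would extend it to a degenerate covariance by perturbing with an independent $\varepsilon N(0,\mathrm{Id})$ and letting $\varepsilon \downarrow 0$ (or directly, via the Laplace transform of a Gaussian supported on an affine subspace), and to complex Gaussians and complex $t$ by splitting each $Y_a$ into its real and imaginary parts and appealing to analytic continuation, since both sides are entire functions of $t$.

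Next I would carry out the coefficient extraction. Both sides above are entire in $t \in \C^{2L}$, so their Taylor coefficients at $t = 0$ agree. On the left, expanding $\exp(Z(t)) = \sum_{n \ge 0} \frac{1}{n!} Z(t)^n$ and collecting the homogeneous part of degree $2L$, the coefficient of the monomial $t_1 t_2 \cdots t_{2L}$ is $\E[Y_1 Y_2 \cdots Y_{2L}]$, because the $(2L)!$ orderings of the distinct indices each contribute $\tfrac{1}{(2L)!}\,\E[Y_1 \cdots Y_{2L}]$. On the right, writing $Q(t) \coloneq \sum_{a,b} t_a t_b\, \E[Y_a Y_b]$ and $\exp(\tfrac12 Q(t)) = \sum_{L' \ge 0} \frac{1}{2^{L'} L'!} Q(t)^{L'}$, only the term $L' = L$ contributes to that monomial; in $Q(t)^L$ the monomial $t_1 \cdots t_{2L}$ is produced exactly by the ordered sequences of $L$ ordered pairs whose underlying unordered pairs partition $\{1,\dots,2L\}$, of which there are $L!\,2^L$ per pairing $\bm{\pi} \in \Pi_2(2L)$, each contributing $\prod_{\{a,b\} \in \bm{\pi}} \E[Y_a Y_b]$. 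Multiplying by $\frac{1}{2^L L!}$ cancels this combinatorial factor, leaving $\sum_{\bm{\pi} \in \Pi_2(2L)} \prod_{\{a,b\}\in\bm{\pi}} \E[Y_a Y_b]$. Equating the two coefficients gives the lemma.

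The hard part is really only the first step, namely justifying the moment generating function identity for complex, possibly degenerate Gaussians; but this is standard and handled cleanly by analytic continuation from the real non-degenerate case, after which everything is bookkeeping. As an alternative route I would use induction on $L$ via Gaussian integration by parts (Stein's identity) in the form $\E[Y_1 Y_2 \cdots Y_{2L}] = \sum_{j=2}^{2L} \E[Y_1 Y_j]\, \E\big[\prod_{k \ne 1,j} Y_k\big]$, which peels off the block containing the index $1$ and directly produces the sum over pairings; the complex and degenerate cases are again reduced to real parts together with an approximation argument.
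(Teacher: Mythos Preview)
Your proof is correct and follows one of the standard approaches to Isserlis' theorem. The paper, however, does not actually prove this lemma: it treats Wick's formula as a well-known result and simply cites the literature (Isserlis 1918 and later expositions). So there is nothing to compare in terms of argument; your generating-function proof, as well as the Stein-identity induction you mention as an alternative, are both standard routes to the result the paper is quoting.
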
\
\\
The simple bound
\begin{align}\label{Eq_TraceBound_Neumann}
	& |\tr(A)| \leq \mathrm{rank}(A) \, ||A|| \ ,
\end{align}
that holds for any square matrix $A$ and follows directly from von Neumann's trace inequality (\cite{mirsky1975trace}) will also be of use.
\\
\\
We now begin the proof of Theorem \ref{Thm_TraceMomentBound}.
\\
\\
\textit{Application of Wick's formula:}\\
We apply Wick's formula to the right hand mean in the expression
\begin{align}\label{Eq_TraceExpansion}
	& \E\Big[\tr\Big( \big( \bm{Y} \bm{Y}^\top \big)^L \Big) \Big] = \sum\limits_{j_1,...,j_L=1}^d \sum\limits_{\substack{s_1,...,s_L=1}}^M \E\Big[ \bm{Y}_{j_1,s_1} \bm{Y}_{j_2,s_1} \cdots \bm{Y}_{j_L,s_L} \bm{Y}_{j_1,s_L} \Big]
\end{align}
that arises from expanding all sums in the matrix products. For notational convenience, we re-order the product from the right hand mean into \begin{align}\label{Eq_ProdNiceOrder_New}
	& \E\Big[ \bm{Y}_{j_1,s_1} \bm{Y}_{j_2,s_2} \cdots \bm{Y}_{j_L,s_L} \times \bm{Y}_{j_2,s_1} \bm{Y}_{j_3,s_2} \cdots \bm{Y}_{j_1,s_L} \Big]
\end{align}
and interpret the indexes
\begin{align}\label{Eq_IndexToEdge}
	& (j_1,s_1),(j_2,s_2),...,(j_L,s_L) \ , \ (j_2,s_1),(j_3,s_2),...,(j_1,s_L)\\
	=: & \hspace{0.4cm} e_1, \hspace{0.8cm} e_2, \hspace{0.4cm} ... \hspace{0.7cm} e_L, \hspace{0.9cm} e_{L+1}, \hspace{0.35cm} e_{L+2}, \hspace{0.1cm} ... \hspace{0.4cm} e_{2L} \nonumber
\end{align}
as edges $e_1,...,e_{2L}$ of a $(2L)$-polygon.
\begin{align}\label{Pic_Polygon1}
	\renewcommand{\s}{1}
	\renewcommand{\d}{1cm}
	& \begin{array}{l} \begin{tikzpicture}
			\node (pol1) [draw, minimum size=5*\d, regular polygon, regular polygon sides=8,
			]  at (0,0) {};
			\foreach \x/\y/\i/\j in {1/2/1/1,2/3/{8},3/4/7/4,4/5/6/7,5/6/5/3,6/7/4/6,7/8/3/2,8/1/2/5}
			\path[auto=left, -]
			(pol1.corner \x)--(pol1.corner \y)
			node[midway, circle, inner sep=0cm, draw=none](e1_\i){$e_ {\j}$};
			\foreach \x/\y/\i/\j in {1/2/1/1,3/4/7/4,5/6/5/3,7/8/3/2}
			\path[auto=left, -]
			(pol1.corner \x)--(pol1.corner \y)
			node [whitenode] (s_\i) at (pol1.corner \x) {$s_\j$}
			node [blacknode] (j_\i) at (pol1.corner \y) {$j_\j$};
	\end{tikzpicture} \end{array}
\end{align}
Wick's formula then for (\ref{Eq_ProdNiceOrder_New}) yields
\begin{align*}
	& \E\Big[ \bm{Y}_{j_1,s_1} \bm{Y}_{j_2,s_2} \cdots \bm{Y}_{j_L,s_L} \times \bm{Y}_{j_2,s_1} \bm{Y}_{j_3,s_2} \cdots \bm{Y}_{j_1,s_L} \Big]\\
	& = \E\Big[ \bm{Y}_{e_1} \bm{Y}_{e_2} \cdots \bm{Y}_{e_{2L}} \Big] = \sum\limits_{\bm{\pi} \in \Pi_2(2L)} \prod\limits_{\{a,b\} \in \bm{\pi}} \E[\bm{Y}_{e_a} \bm{Y}_{e_b}]
\end{align*}
and equality (\ref{Eq_TraceExpansion}) becomes
\begin{align}\label{Eq_TraceBoundStep1}
	\E\Big[\tr\Big( \big( \bm{Y} \bm{Y}^\top \big)^L \Big) \Big] & = \sum\limits_{j_1,...,j_L=1}^d \sum\limits_{\substack{s_1,...,s_L=1}}^M \sum\limits_{\bm{\pi} \in \Pi_2(2L)} \prod\limits_{\{a,b\} \in \bm{\pi}} \E[\bm{Y}_{e_a} \bm{Y}_{e_b}] \nonumber\\
	& = \sum\limits_{\bm{\pi} \in \Pi_2(2L)} \sum\limits_{\substack{s_1,...,s_L=1}}^M \sum\limits_{j_1,...,j_L=1}^d \prod\limits_{\{a,b\} \in \bm{\pi}} \E[\bm{Y}_{e_a} \bm{Y}_{e_b}] \ .
\end{align}
Changing the order of summation to sum over pairings $\bm{\pi} \in \Pi_2(2L)$ first, allows us to examine the expression
\begin{align}\label{Eq_TraceBoundExpression1}
	& \sum\limits_{\substack{s_1,...,s_L=1}}^M \sum\limits_{j_1,...,j_L=1}^d \prod\limits_{\{a,b\} \in \bm{\pi}} \E[\bm{Y}_{e_a} \bm{Y}_{e_b}]
\end{align}
for a fixed pairing $\bm{\pi}$.
\\
\\
\textit{Definition of row- and column-cycles:}\\
In addition to the summation-pairing $\bm{\pi}$, two pairings $\bm{\tau}_{r},\bm{\tau}_{c} \in \Pi_2(2L)$ are inherent to our construction of $e_1,...,e_{2L}$. Define the row-pairing $\bm{\tau}_r \in \Pi_2(2L)$ such that $\{a,b\} \in \bm{\tau}_r$ iff the edges $e_a=(j_{x},s_y)$ and $e_b=(j_{x'},s_{y'})$ have the same $j$-index, i.e. $x=x'$. From (\ref{Eq_IndexToEdge}) it is clear that this is indeed a pairing and it may be formalized by
\begin{align*}
	& \{q,\ol{q}\} \in \bm{\tau}_c \ \Leftrightarrow \ \big(q \leq L < \ol{q} \text{ and } \ol{q} = (q-1 \mod L)+L\big) \ \text{ or }\\
	& \hspace{4cm} \big(\ol{q} \leq L < q \text{ and } q = (\ol{q}-1 \mod L)+L\big) \ .
\end{align*}
Similarly, define the column-pairing $\bm{\tau}_c \in \Pi_2(2L)$ such that $\{a,b\} \in \bm{\tau}_c$ iff the edges $e_a=(j_{x},s_y)$ and $e_b=(j_{x'},s_{y'})$ have the same $s$-index, i.e. $y=y'$. This can be formalized more easily by
\begin{align*}
	& q,\tilde{q} \in \bm{\tau}_r \ \Leftrightarrow \ |q-\tilde{q}| = L \ .
\end{align*}
In the polygon-interpretation (\ref{Pic_Polygon1}), we have $\{a,b\} \in \bm{\tau}_r$, iff $e_a$ and $e_b$ share the same $j$-vertex (gray) and $\{a,b\} \in \bm{\tau}_c$, iff $e_a$ and $e_b$ share the same $s$-vertex (white).
\\
\\
With the two pairings $\bm{\pi}$ and $\bm{\tau}_r$ we define a row-cycle $\xi$ as a subset of $\{1,...,2L\}$ that is minimal with the properties
\begin{align*}
	\xi & \neq \emptyset\\
	\forall a \in \xi \, \forall b \in \{1,...,2L\} & : \ \big( \{a,b\} \in \bm{\pi} \Rightarrow b \in \xi \big)\\
	\forall a \in \xi \, \forall b \in \{1,...,2L\} & : \ \big( \{a,b\} \in \bm{\tau}_r \Rightarrow b \in \xi \big) \ .
\end{align*}
Let $C_r(\bm{\pi})$ denote the set of row-cycles to a given $\pi \in \Pi_2(2L)$. Analogously, we define a column-cycle $\zeta$ as a subset of $\{1,...,2L\}$ that is minimal with the properties
\begin{align*}
	\zeta & \neq \emptyset\\
	\forall a \in \zeta \, \forall b \in \{1,...,2L\} & : \ \big( \{a,b\} \in \bm{\pi} \Rightarrow b \in \zeta \big)\\
	\forall a \in \zeta \, \forall b \in \{1,...,2L\} & : \ \big( \{a,b\} \in \bm{\tau}_c \Rightarrow b \in \zeta \big)
\end{align*}
and write $C_c(\bm{\pi})$ for the set of column-cycles to given $\pi \in \Pi_2(2L)$.\\
\\
\textit{Polygon-interpretation of row- and column-cycles:}\\
In the polygon-interpretation, we imagine $\bm{\pi}$ as a pairing of the edges of the polygon. If we fuse the vertices of paired edges (white to white and gray to gray), each remaining vertex will represent a row-cycle (gray) or a column-cycle (white). As an example, we for $L=4$ draw the pairing
\begin{align*}
	& \bm{\pi}_0 = \big\{ \{1,3\}, \{5,6\}, \{2,7\}, \{4,8\} \big\} \in \Pi_2(8)
\end{align*}
into the polygon with blue connections between the paired edges.
\begin{align}\label{Pic_Polygon2}
	\renewcommand{\s}{1}
	\renewcommand{\d}{1cm}
	& \begin{array}{l} \begin{tikzpicture}
			\node (pol1) [draw, minimum size=7*\d, regular polygon, regular polygon sides=8,
			]  at (0,0) {};
			\foreach \x/\y/\i/\j in {1/2/1/1,2/3/{8},3/4/7/4,4/5/6/7,5/6/5/3,6/7/4/6,7/8/3/2,8/1/2/5}
			\path[auto=left, -]
			(pol1.corner \x)--(pol1.corner \y)
			node[midway, circle, inner sep=0cm, draw=none](e1_\i){$e_ {\j}$};
			\foreach \x/\y/\i/\j in {1/2/1/1,3/4/7/4,5/6/5/3,7/8/3/2}
			\path[auto=left, -]
			(pol1.corner \x)--(pol1.corner \y)
			node [whitenode] (s_\j) at (pol1.corner \x) {$s_\j$}
			node [blacknode] (j_\j) at (pol1.corner \y) {$j_\j$};
			\path[-]
			(e1_2) edge [out=-45-45*2,in=-45-45*3, color=blue]  (e1_3)
			(e1_4) edge [out=-45-45*4,in=-45-45*5, color=blue] (e1_5)
			(e1_6) edge [out=-45-45*6,in=-45-45*8, color=blue] (e1_8)
			(e1_7) edge [out=-45-45*7,in=-45-45*1, color=blue] (e1_1)
			;
			\path[-, dashed]
			(s_1) edge [bend right=0, color=red]  (s_2)
			(s_2) edge [bend right=0, color=red]  (s_3)
			(s_3) edge [bend right=0, color=red]  (s_4)
			(s_4) edge [bend right=0, color=red]  (s_1)
			(j_1) edge [bend right=80, color=red]  (j_4)
			(j_3) edge [bend right=80, color=white]  (j_2)
			;
	\end{tikzpicture} \end{array}
\end{align}
The red dashed lines show which vertices must be fused due to the pairing of their connecting edges. Fusion of the vertices thus leads to the ribbon-graph:
\renewcommand{\s}{1}
\renewcommand{\d}{1cm}
\begin{align}\label{Pic_Graph1}
	& \begin{array}{l} \begin{tikzpicture}[node distance=\d and \d,>=stealth',auto, every place/.style={draw}]
			\node [whitenode] (s1) {$\zeta_1$};
			\node [blacknode] (j1) [left=1.4*\d of s1] {$\xi_1$};
			\node [blacknode] (j2) [above right=of s1] {$\xi_2$};
			\node [blacknode] (j3) [below right=of s1] {$\xi_3$};
			\path[-]
			(s1) edge [bend right=10] node[el,below] (e5) {$e_5$} (j2)
			(s1) edge [bend right=-10] node[el,above] (e2) {$e_2$} (j2)
			(s1) edge [bend right=10] node[el,below] (e6) {$e_6$} (j3)
			(s1) edge [bend right=-10] node[el,above] (e3) {$e_3$} (j3)
			(j1) edge [bend right=-50] node[el,above] (e1) {$e_1$} (s1)
			(j1) edge [bend right=-30] node[el,below] (e4) {$e_4$} (s1)
			(j1) edge [bend right=30] node[el,above] (e8) {$e_8$} (s1)
			(j1) edge [bend right=50] node[el,below] (e7) {$e_7$} (s1)
			;
			\path[-]
			;
	\end{tikzpicture} \end{array}
\end{align}
The cycles can be recovered by listing all the edges that connect to a remaining vertex, so the only column-cycle in $C_c(\bm{\pi}_0)$ is
\begin{align*}
	& \zeta_1 = \{1,...,8\}
\end{align*}
and the row-cycles in $C_r(\bm{\pi}_0)$ are
\begin{align*}
	& \xi_1 = \{1,4,7,8\} \ , \ \xi_2 = \{2,5\} \ \text{ and } \ \xi_3 = \{3,6\} \ .
\end{align*}
An advantage of this interpretation is that it is immediately obvious that the number of total cycles cannot exceed $L+1$, i.e.
\begin{align}\label{Eq_CycleBound}
	& \#C_r(\bm{\pi}) + \#C_c(\bm{\pi}) \leq L+1 \ ,
\end{align}
since that is the maximum number of vertices a (connected) ribbon-graph with $2L$ edges can have.
\\
\\
\textit{Splitting the sums according to cycles:}\\
By construction of the row-cycles, each $j_x$ can only occur in edges of a single row-cycle $\xi \in C_r(\bm{\pi})$ (compare (\ref{Pic_Polygon2})). It follows that we can split the sum (\ref{Eq_TraceBoundExpression1}) up by row-cycles to see
\begin{align}\label{Eq_TraceBoundExpression2}
	& \text{(\ref{Eq_TraceBoundExpression1})} = \sum\limits_{\substack{s_1,...,s_L=1}}^M \prod\limits_{\xi \in C_r(\bm{\pi})} \sum\limits_{\substack{j_a=1 \\ \forall a \leq L, a \in \xi}}^d \prod\limits_{\substack{\{a,b\} \in \bm{\pi} \\ a,b \in \xi}}  \E[\bm{Y}_{e_a} \bm{Y}_{e_b}] \ .
\end{align}
The row-cycles $\xi \in C_r(\bm{\pi})$ by construction have a cycle-structure in the sense that there exists an enumeration $q^{\xi}_0,...,q^{\xi}_{\#\xi-1}$ of $\xi$ such that
\begin{align*}
	& \forall \text{ even } i<\#\xi : \ \ \{q^{\xi}_i,q^{\xi}_{i+1}\} \in \bm{\pi} \ \ \text{ and } \ \ \{q^{\xi}_{i-1 \mod \#\xi},q^{\xi}_{i}\} \in \bm{\tau}_r \ .
\end{align*}
This cycle structure turns the second sum in (\ref{Eq_TraceBoundExpression2}) into a trace of the form
\begin{align}\label{Eq_TraceBoundStep2}
	& \sum\limits_{\substack{j_a=1 \\ \forall a \leq L, a \in \xi}}^d \prod\limits_{\substack{\{a,b\} \in \bm{\pi} \\ a,b \in \xi}}  \E[\bm{Y}_{e_a} \bm{Y}_{e_b}] = \tr\big( (A_{\bm{s}(q^\xi_0),\bm{s}(q^\xi_1)}) \cdots (A_{\bm{s}(q^\xi_{\#\xi-2}),\bm{s}(q^\xi_{\#\xi-1})}) \big) \ ,
\end{align}
where $\bm{s}(a)$ denotes the second entry $s_y$ of the edge $e_a = (j_x,s_y)$. We can thus bound
\begin{align}\label{Eq_TraceBoundStep3}
	& \E\Big[\tr\Big( \big( \bm{Y} \bm{Y}^\top \big)^L \Big) \Big] \overset{\text{(\ref{Eq_TraceBoundStep1})}}{=} \sum\limits_{\bm{\pi} \in \Pi_2(2L)} \sum\limits_{\substack{s_1,...,s_L=1}}^M \sum\limits_{j_1,...,j_L=1}^d \prod\limits_{\{a,b\} \in \bm{\pi}} \E[\bm{Y}_{e_a} \bm{Y}_{e_b}] \nonumber\\
	& \overset{\text{(\ref{Eq_TraceBoundExpression2})}}{=} \sum\limits_{\bm{\pi} \in \Pi_2(2L)} \sum\limits_{\substack{s_1,...,s_L=1}}^M \prod\limits_{\xi \in C_r(\bm{\pi})} \sum\limits_{\substack{j_a=1 \\ \forall a \leq L, a \in \xi}}^d \prod\limits_{\substack{\{a,b\} \in \bm{\pi} \\ a,b \in \xi}}  \E[\bm{Y}_{e_a} \bm{Y}_{e_b}] \nonumber\\
	& \overset{\text{(\ref{Eq_TraceBoundStep2})}}{=} \sum\limits_{\bm{\pi} \in \Pi_2(2L)} \sum\limits_{\substack{s_1,...,s_L=1}}^M \prod\limits_{\xi \in C_r(\bm{\pi})} \tr\big( (A_{\bm{s}(q^\xi_0),\bm{s}(q^\xi_1)}) \cdots (A_{\bm{s}(q^\xi_{\#\xi-2}),\bm{s}(q^\xi_{\#\xi-1})}) \big) \nonumber\\
	& \overset{\text{(\ref{Eq_TraceBound_Neumann})}}{\leq} \sum\limits_{\bm{\pi} \in \Pi_2(2L)} \sum\limits_{\substack{s_1,...,s_L=1}}^M \prod\limits_{\xi \in C_r(\bm{\pi})} d \, ||A_{\bm{s}(q^\xi_0),\bm{s}(q^\xi_1)}|| \cdots ||A_{\bm{s}(q^\xi_{\#\xi-2}),\bm{s}(q^\xi_{\#\xi-1})}|| \nonumber\\
	& = \sum\limits_{\bm{\pi} \in \Pi_2(2L)} d^{\#C_r(\bm{\pi})} \sum\limits_{\substack{s_1,...,s_L=1}}^M \prod\limits_{\xi \in C_r(\bm{\pi})}  ||A_{\bm{s}(q^\xi_0),\bm{s}(q^\xi_1)}|| \cdots ||A_{\bm{s}(q^\xi_{\#\xi-2}),\bm{s}(q^\xi_{\#\xi-1})}|| \nonumber\\
	& = \sum\limits_{\bm{\pi} \in \Pi_2(2L)} d^{\#C_r(\bm{\pi})} \sum\limits_{\substack{s_1,...,s_L=1}}^M \prod\limits_{\{a,b\} \in \bm{\pi}}  \underbrace{||A_{\bm{s}(a),\bm{s}(b)}||}_{= B_{\bm{s}(a),\bm{s}(b)}} \ .
\end{align}
We can then analogously split up the second sum according to column-cycles to see
\begin{align*}
	& \sum\limits_{\substack{s_1,...,s_L=1}}^M \prod\limits_{\{a,b\} \in \bm{\pi}} B_{\bm{s}(a),\bm{s}(b)} = \prod\limits_{\zeta \in C_c(\bm{\pi})} \sum\limits_{\substack{s_a=1 \\ \forall a \leq L, a \in \zeta}}^M \prod\limits_{\substack{\{a,b\} \in \bm{\pi} \\ a,b \in \zeta}} B_{\bm{s}(a),\bm{s}(b)}
\end{align*}
and likewise use the cycle structure of $\zeta$ to write the right hand sum as $\tr(B^{\frac{\#\zeta}{2}})$. We finally arrive at
\begin{align*}
	& \E\Big[\tr\Big( \big( \bm{Y} \bm{Y}^\top \big)^L \Big) \Big] \overset{\text{(\ref{Eq_TraceBoundStep3})}}{\leq} \sum\limits_{\bm{\pi} \in \Pi_2(2L)} d^{\#C_r(\bm{\pi})} \sum\limits_{\substack{s_1,...,s_L=1}}^M \prod\limits_{\{a,b\} \in \bm{\pi}} B_{\bm{s}(a),\bm{s}(b)}\\
	& = \sum\limits_{\bm{\pi} \in \Pi_2(2L)} d^{\#C_r(\bm{\pi})} \prod\limits_{\zeta \in C_c(\bm{\pi})} \tr(B^{\frac{\#\zeta}{2}}) \overset{\text{(\ref{Eq_TraceBound_Neumann})}}{\leq} \sum\limits_{\bm{\pi} \in \Pi_2(2L)} d^{\#C_r(\bm{\pi})} \prod\limits_{\zeta \in C_c(\bm{\pi})} M \kappa^{\frac{\#\zeta}{2}}\\
	& = \kappa^L \sum\limits_{\bm{\pi} \in \Pi_2(2L)} d^{\#C_r(\bm{\pi})} M^{\#C_c(\bm{\pi})} \leq \kappa^L \sum\limits_{\bm{\pi} \in \Pi_2(2L)} (d+M)^{\#C_r(\bm{\pi}) + \#C_c(\bm{\pi})}\\
	& \overset{\text{(\ref{Eq_CycleBound})}}{\leq} \kappa^L (d+M)^{L+1} \#\Pi_2(2L) \ .
\end{align*}
It is a simple combinatorial exercise that the number of pairings of the set $\{1,...,2L\}$ is given by $\#\Pi_2(2L) = (2L-1) \cdot (2L-3) \cdots 3 \cdot 1 = (2L-1)!!$, which concludes the proof of Theorem \ref{Thm_TraceMomentBound}.\qed

\section{Proof of Theorem \ref{Thm_MarchenkoPastur_Daniell} in the Gaussian case}

The proof of Theorem \ref{Thm_MarchenkoPastur_Daniell} in the Gaussian case relies strongly on the approximation of the Daniell smoothed periodogram by a random matrix of a structure similar to that of a Sample covariance matrix.
With the notation $\bm{\eta} = \big[ \eta_0,...,\eta_{n-1} \big]$ we define the approximating matrix
\begin{align}\label{Eq_Def_tSbar0}
	& \tilde{S}'\Big( \frac{2\pi r}{n} \Big) \coloneq \frac{1}{2m+1} G\Big( \frac{2\pi r}{n} \Big) \bm{\eta} V D_r V^* \bm{\eta}^\top G\Big( \frac{2\pi r}{n} \Big)^* \ .
\end{align}
The main part of this proof is the following approximation result, made possible by Theorem \ref{Thm_TraceMomentBound}.

\begin{proposition}[Approximation by a simpler model]\label{Prop_ApproxSimpler}\
	\\
	Suppose Assumptions \ref{Assumption0_LinearProcess} and \ref{Assumption_Main} without (A4) hold. Assume further that the innovations $(\eta_t)_{t \in \Z}$ are Gaussian.\\
	There for every (small) $\delta > 0$ and (large) $D>0$ exists a constant $C = C(\delta,D) > 0$, which also depends on the constants $\cK_1,\cK_2,\alpha,\gamma$ from Assumption \ref{Assumption_Main},
	such that
	\begin{align*}
		& \bP\Big( \sup\limits_{r \in \{0,...,n-1\}} d_{\operatorname{BL}}\big( \hat{\mu}_{S(\frac{2\pi r}{n})}, \hat{\mu}_{\tilde{S}'(\frac{2\pi r}{n})} \big) \geq \big( 48\cK_2 + 20\cK_2^2 \big) n^{\max(\frac{\delta+(1-\gamma)\alpha}{1+\gamma}, \delta + \alpha-1)} \Big) \leq C n^{-D}
	\end{align*}
	holds for all $n \in \N$.
\end{proposition}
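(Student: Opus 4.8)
The plan is to show that, in the discrete Fourier domain, the data matrix behaves like $G(\tfrac{2\pi r}{n})\bm{\eta}$ up to errors that are negligible on the scale seen by the empirical spectral distribution. Write $W \coloneq \bm{X}VD_r$ and $\tilde W \coloneq G(\tfrac{2\pi r}{n})\bm{\eta}VD_r$ (both $d\times n$, effectively $d\times(2m{+}1)$ since $D_r$ kills all but $2m{+}1$ columns), so that $S(\tfrac{2\pi r}{n})=\tfrac{1}{2m+1}WW^*$ and $\tilde{S}'(\tfrac{2\pi r}{n})=\tfrac{1}{2m+1}\tilde W\tilde W^*$. The entry point is the elementary Gram-matrix perturbation estimate
\begin{align*}
	d_{\operatorname{BL}}\big(\hat\mu_{S(\frac{2\pi r}{n})},\,\hat\mu_{\tilde{S}'(\frac{2\pi r}{n})}\big)\;\le\;\sqrt{\frac{2\,\tr\!\big((W-\tilde W)(W-\tilde W)^*\big)}{d\,(2m+1)}}\;\cdot\;\sqrt{\frac{\tr(WW^*)+\tr(\tilde W\tilde W^*)}{d\,(2m+1)}}\;,
\end{align*}
which follows from Mirsky's inequality for singular values (to get $\sum_i(\sigma_i(W)-\sigma_i(\tilde W))^2\le\tr((W-\tilde W)(W-\tilde W)^*)$ and hence a bound on $\sum_i|\lambda_i(S)-\lambda_i(\tilde{S}')|$) together with $d_{\operatorname{BL}}\le W_1$. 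The second factor is the sum of the mean eigenvalues of $S$ and $\tilde{S}'$; applying Theorem~\ref{Thm_TraceMomentBound} to $\bm{X}VD_r$ and to $\bm\eta VD_r$ — whose column auto-covariance matrices are controlled, via (A2) and the unitarity of $V$, so that the matrix $B$ of Theorem~\ref{Thm_TraceMomentBound} has operator norm $O(\log n)$ — shows this factor is $O(n^{\delta})$ with probability $1-O(n^{-D})$, uniformly in $r$. It thus remains to bound $\tfrac{1}{d(2m+1)}\tr((W-\tilde W)(W-\tilde W)^*)$.

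To estimate $W-\tilde W$ I would fix a truncation level $K=K(n)$ of polynomial order and telescope through intermediate matrices. Replacing $\Psi_k$ by $0$ for $k>K$ turns $\bm X$ into the $K$-dependent process $\bm X^{(K)}$ with $X_t^{(K)}=\sum_{k=0}^{K}\Psi_k\eta_{t-k}$ and leaves a \emph{tail} matrix $\bm X^{>K}VD_r$. Bin by bin one has the exact identity $(\bm X^{(K)}V)_{\bullet,l}=G^{(K)}(\theta_l)(\bm\eta V)_{\bullet,l}+(R_{\mathrm{edge}})_{\bullet,l}$, where $\theta_l=\tfrac{2\pi(l-1)}{n}$, $G^{(K)}(\theta)=\sum_{k=0}^{K}e^{-ik\theta}\Psi_k$, and $R_{\mathrm{edge}}$ is an explicit \emph{boundary} matrix collecting the innovations $\eta_{-1},\dots,\eta_{-K}$ and $\eta_{n-K},\dots,\eta_{n-1}$ that the finite Fourier transform ``wraps around'' — crucially $R_{\mathrm{edge}}$ is built from only $2K$ innovation vectors, so it has rank at most $2K$. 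Finally, since $D_r$ activates only the $2m{+}1$ bins $l$ with $\rho_n(r,l-1)\le m$, one replaces $G^{(K)}(\theta_l)$ on those bins by the single matrix $G^{(K)}(\tfrac{2\pi r}{n})$, and then $G^{(K)}(\tfrac{2\pi r}{n})$ by $G(\tfrac{2\pi r}{n})$. So $W-\tilde W$ is the sum of four matrices: the tail $\bm X^{>K}VD_r$, the boundary $R_{\mathrm{edge}}$, a frequency-smearing term, and a $G$-detruncation term.

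Each of the four pieces is handled the same way: bound the expectation of $\tfrac{1}{d(2m+1)}\tr(EE^*)$ for the corresponding Gaussian error matrix $E$ (a linear image of the innovations), and then use Theorem~\ref{Thm_TraceMomentBound} to upgrade this to a bound holding with probability $1-O(n^{-D})$, the union bound over the $n$ values of $r$ being absorbed by enlarging the moment order $L=L(\delta,D)$. The tail term and the $G$-detruncation term are governed by $\sum_{k>K}\|\Psi_k\|\le\cK_2K^{-\gamma}$ from (A2) (the tail additionally needs that the auto-covariances of $\bm X^{>K}$ decay like $K^{-2\gamma}$); the frequency-smearing term is governed by $\|G^{(K)}(\theta_l)-G^{(K)}(\tfrac{2\pi r}{n})\|\le\cK_2\cdot 2\pi m/n$ (finiteness of $\sum_k k\|\Psi_k\|$ in (A2)) and produces the exponent $\alpha-1$; and $R_{\mathrm{edge}}$, each of whose column auto-covariance matrices has norm $O(1/n)$ and which has rank $\le 2K$, produces a positive power of $K$ times a negative power of $n$. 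Adding the four contributions and choosing $K$ to balance the decreasing factor $K^{-\gamma}$ against the increasing-in-$K$ boundary contribution — then taking the maximum against the $\alpha-1$ exponent — yields the asserted rate; the explicit constant $48\cK_2+20\cK_2^2$ and the $n^{\delta}$-slack are what survives the bookkeeping of the $\log n$ factors, the combinatorial constant $(2L-1)!!$ in Theorem~\ref{Thm_TraceMomentBound}, and the probabilistic losses.

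I expect the main obstacle to be $R_{\mathrm{edge}}$. Its entries are localised in neither a fixed set of rows nor of columns — every Fourier bin sees the boundary innovations — so column-sparsity or a bare rank bound does not deliver the tail probability $O(n^{-D})$ needed together with the uniformity over $r$. This is exactly the situation Theorem~\ref{Thm_TraceMomentBound} is designed for, and applying it requires bounding the operator norm of the matrix $\big(\|A_{s,s'}\|\big)_{s,s'}$ of column auto-covariances of $R_{\mathrm{edge}}$ (and of $\bm X^{>K}VD_r$), which in turn forces one to sum the discrete-Fourier cross-bin contributions carefully; this is where the $O(\log n)$ and $O(m/n)$ bounds quoted above come from. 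Propagating such estimates through every intermediate matrix while tracking the dependence on $\cK_1,\cK_2$ is the bulk of the work, but once Theorem~\ref{Thm_TraceMomentBound} is available no individual step is more than routine.
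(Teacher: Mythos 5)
Your proposal is workable and, if executed, would prove the proposition, but it takes a genuinely different route to convert matrix errors into an ESD bound. The paper inserts the explicitly wrapped matrix $\tilde{S}$ of (\ref{Eq_Def_tS}) and uses two separate devices: the rank inequality of Lemma~\ref{Lemma_LowRankPerturbation} for the first-$K$-columns (wrap) part — this is where $48K/d$ and hence the exponent $\tfrac{\delta+(1-\gamma)\alpha}{1+\gamma}$ come from after balancing against the coefficient tail in Lemma~\ref{Lemma_ApproxNeighboring} — and the bound $d_{\operatorname{BL}}(\hat{\mu}_A,\hat{\mu}_{A'})\le\|A-A'\|$ for the frequency-smearing step in Lemma~\ref{Lemma_ApproxContinuity}; Theorem~\ref{Thm_TraceMomentBound} enters exactly as in your sketch. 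You instead run one global Mirsky/Cauchy--Schwarz (Bai--Silverstein type) inequality, which costs you an additional factor (the average eigenvalue, needing its own uniform-in-$r$ high-probability bound; this is fine via Theorem~\ref{Thm_TraceMomentBound}, and under (A2) the relevant $\|B\|$ is in fact $O(\cK_2^2)$, so your $O(\log n)$ is a harmless overestimate) but buys insensitivity to whether an error piece is low-rank or merely small in norm, and your truncation-based four-term decomposition is a legitimate reshuffling of the paper's two lemmas. Three cautions. First, your bookkeeping will not reproduce the stated exponent or the constant $48\cK_2+20\cK_2^2$: balancing your boundary contribution (rank $\le 2K$ combined with the Theorem~\ref{Thm_TraceMomentBound} operator-norm bound gives, after the square root, roughly $\cK_2\sqrt{K}\,n^{(\delta-1)/2}$) against $K^{-\gamma}$ yields the exponent $-\gamma/(1+2\gamma)$, not $\tfrac{(1-\gamma)\alpha}{1+\gamma}$; this is immaterial because for small $\delta$ that exponent is always dominated by the stated threshold and the $n^{\delta}$-slack absorbs constants, but you should say so rather than claim the asserted rate falls out. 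Second, you must genuinely exploit the low rank (or compute the Fourier-domain energy of $R_{\mathrm{edge}}$ carefully): the lazy bound $\tr(EE^*)\le d\,\|EE^*\|$ for the boundary piece gives exponent $(\alpha-1)/2$, which exceeds the stated threshold whenever $\alpha>\tfrac{1+\gamma}{3\gamma-1}$, a nonempty regime for every $\gamma>1$. Third, the rank bound for $R_{\mathrm{edge}}$ is correct but not for the reason you give (its columns involve the $s$-dependent matrices $\sum_k e^{-ik\theta_s}\Psi_k$ applied to the boundary innovations, so ``built from $2K$ innovation vectors'' does not by itself cap the rank); the right argument is $R_{\mathrm{edge}}=(\bm{X}^{(K)}-\tilde{\bm{X}}^{(K)})V$ with the circularly wrapped truncated process, which differs from $\bm{X}^{(K)}$ in at most $K$ columns, exactly the identity (\ref{Eq_NeigboringProperty}) the paper uses.
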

\
\\
The usefulness of this result is highlighted by the following lemma. The matrix $\bm{\eta} V D_r V^* \bm{\eta}^\top$ being almost of isotropic Wishart type makes $\tilde{S}'\big( \frac{2\pi r}{n} \big)$ almost of Wishart-type with covariance matrix $F\big( \frac{2\pi r}{n} \big)$.

\begin{lemma}[Almost Wishart matrices]\thlabel{Lemma_AlmostWishart}\
\\
Suppose that the innovations $(\eta_t)_{t \in \Z}$ are Gaussian and $n>2m$.
\begin{itemize}
	\item[a)]
	For $r \in \{1,...,n\}$ such that $\rho_n(r,0) > m$ and $\rho_n(r,\frac{n}{2}) > m$ the matrix $\bm{\eta} V D_r V^* \bm{\eta}^\top$ has isotropic complex Wishart distribution $CW_d(\operatorname{Id}_d,2m+1)$.
	
	\item[b)]
	For $r \in \{0, \lfloor \frac{n}{2} \rfloor\}$ there exists a (Hermitian) matrix $E \in \C^{d \times d}$ with rank no greater than $3$ such that $\bm{\eta} V D_r V^* \bm{\eta}^\top-E$ has isotropic real Wishart distribution $W_d(\operatorname{Id}_d,2m+1)$.
\end{itemize}
\end{lemma}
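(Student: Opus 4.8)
The plan is to pass to the Fourier-side matrix $W \coloneq \bm\eta V$, so that $\bm\eta V D_r V^* \bm\eta^\top = W D_r W^* = \sum_{s\in\mathcal I_r} W_{\bullet,s}W_{\bullet,s}^*$, where $\mathcal I_r \coloneq \{s\in\{1,\dots,n\}:\rho_n(r,s-1)\le m\}$ has exactly $2m+1$ elements since $n>2m$. Being a linear image of the standard real Gaussian matrix $\bm\eta$, the matrix $W$ is jointly Gaussian; $V^*V=\operatorname{Id}$ gives $\langle V_{\bullet,s},\overline{V_{\bullet,s'}}\rangle=\delta_{s,s'}$, while $V=V^\top$ gives $\langle V_{\bullet,s},V_{\bullet,s'}\rangle=\mathbbm 1_{(s-1)+(s'-1)\equiv 0 \bmod n}$, and hence $\overline{W_{\bullet,s}}=W_{\bullet,\sigma(s)}$ for the involution $\sigma(s)\in\{1,\dots,n\}$, $\sigma(s)\equiv 2-s\pmod n$, whose only fixed indices are $s=1$ (frequency $0$) and, when $n$ is even, $s=\tfrac n2+1$ (frequency $\tfrac n2$), at which $W_{\bullet,s}$ is real. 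The structural fact I would record first: if $\mathcal S\subseteq\{1,\dots,n\}$ contains no $\sigma$-pair $\{s,\sigma(s)\}$ with $s\ne\sigma(s)$, then the columns $\{W_{\bullet,s}\}_{s\in\mathcal S}$ are mutually independent, each $\sigma$-fixed one being real $N(0,\operatorname{Id}_d)$ and each non-fixed one standard complex Gaussian $\mathcal{CN}(0,\operatorname{Id}_d)$; this follows directly from the two inner-product identities (for non-fixed $s$ they force $\sqrt2\,\Re V_{\bullet,s}$ and $\sqrt2\,\Im V_{\bullet,s}$ to be orthonormal and orthogonal to the analogous vectors for $s'\ne s$, and $\bm\eta$ sends an orthonormal system to i.i.d.\ $N(0,\operatorname{Id}_d)$ vectors).

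For a) I must show the hypotheses $\rho_n(r,0)>m$ and $\rho_n(r,\tfrac n2)>m$ make $\mathcal I_r$ free of $\sigma$-fixed indices and of $\sigma$-pairs. A $\sigma$-fixed index in $\mathcal I_r$ would be a lattice point $q\in\{0,\tfrac n2\}$ within distance $m$ of $r$, which is excluded; and a $\sigma$-pair needs both $q$ and $-q$ in $\mathcal I_r$. Since we may assume $n\ge 2m+2$ (otherwise $\rho_n(r,0)\le\lfloor n/2\rfloor=m$ always and the hypothesis is vacuous), $\mathcal I_r$ is a proper arc $[r-m,r+m]$ of $\R/n\Z$; if it contained $q$ and $-q$, the reflection $x\mapsto -x$ would map the sub-arc from $q$ to $-q$ inside $\mathcal I_r$ onto itself and hence fix its midpoint, but the only fixed points of this reflection on $\R/n\Z$ are $0$ and $\tfrac n2$, so $0\in\mathcal I_r$ or $\tfrac n2\in\mathcal I_r$ — exactly the excluded cases (for odd $n$ the point $\tfrac n2$ is a half-integer, which is why $\rho_n(r,\tfrac n2)$ is measured against it). By the structural fact $W D_r W^*$ is then a sum of $2m+1$ i.i.d.\ $\mathcal{CN}(0,\operatorname{Id}_d)$ rank-one matrices, i.e.\ $CW_d(\operatorname{Id}_d,2m+1)$.

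For b) I would decompose $\mathcal I_r$ into its $\sigma$-fixed indices, its $\sigma$-pairs, and at most one ``leftover'' index $s$ with $\sigma(s)\notin\mathcal I_r$, pick a transversal $\mathcal S'$ (all $\sigma$-fixed indices, one index per $\sigma$-pair, and the leftover if present) and apply the structural fact to $\mathcal S'$: a $\sigma$-fixed column contributes $W_{\bullet,s}W_{\bullet,s}^\top\sim W_d(\operatorname{Id}_d,1)$ (for $q=0$ the relevant column of $V$ is the normalized all-ones vector, for $q=\tfrac n2$ the normalized alternating-sign vector), while a $\sigma$-pair contributes $W_{\bullet,s}W_{\bullet,s}^*+W_{\bullet,\sigma(s)}W_{\bullet,\sigma(s)}^*=2\Re(W_{\bullet,s}W_{\bullet,s}^*)=(\sqrt2\,\Re W_{\bullet,s})(\sqrt2\,\Re W_{\bullet,s})^\top+(\sqrt2\,\Im W_{\bullet,s})(\sqrt2\,\Im W_{\bullet,s})^\top\sim W_d(\operatorname{Id}_d,2)$, all these pieces being independent. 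If $r=0$, or $r=\tfrac n2$ with $n$ even, or $r=\lfloor n/2\rfloor$ with $n=2m+1$ (where $\mathcal I_r=\{1,\dots,n\}$ and $\bm\eta V D_r V^*\bm\eta^\top=\bm\eta\bm\eta^\top$), then $\mathcal I_r$ is exactly one $\sigma$-fixed index plus $m$ $\sigma$-pairs with no leftover, so $\bm\eta V D_r V^*\bm\eta^\top\sim W_d(\operatorname{Id}_d,2m+1)$ exactly and $E=0$. The one remaining case is $r=\lfloor n/2\rfloor$ with odd $n\ge 2m+3$: here $-r\equiv r+1$, so $-\mathcal I_r$ is $\mathcal I_r$ shifted one lattice step, whence $\mathcal I_r$ has no $\sigma$-fixed lattice point, exactly $m$ $\sigma$-pairs, and exactly one leftover index $s_1$ (the unique element of $\mathcal I_r\setminus(-\mathcal I_r)$, of frequency $r-m\notin\{0,\tfrac n2\}$), whose column contributes the complex Hermitian matrix $W_{\bullet,s_1}W_{\bullet,s_1}^*$ of rank $\le 2$; taking $E\coloneq W_{\bullet,s_1}W_{\bullet,s_1}^*-(\sqrt2\,\Re W_{\bullet,s_1})(\sqrt2\,\Re W_{\bullet,s_1})^\top$, which has image inside $\operatorname{span}\{\Re W_{\bullet,s_1},\Im W_{\bullet,s_1}\}$ and hence rank $\le 2\le 3$, yields $\bm\eta V D_r V^*\bm\eta^\top-E=W_d(\operatorname{Id}_d,2m)+(\sqrt2\,\Re W_{\bullet,s_1})(\sqrt2\,\Re W_{\bullet,s_1})^\top\sim W_d(\operatorname{Id}_d,2m+1)$, since $\sqrt2\,\Re W_{\bullet,s_1}\sim N(0,\operatorname{Id}_d)$ is independent of the $m$-pair part. (Alternatively one can take $E=W_{\bullet,s_1}W_{\bullet,s_1}^*-\xi\xi^\top$ with a fresh independent $\xi\sim N(0,\operatorname{Id}_d)$, of rank $\le 3$.)

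The main obstacle is the discrete geometry of the reflection $q\mapsto -q$ on $\mathbb Z/n$ together with making the statement uniform in the parity of $n$: one has to recognize that for odd $n$ the ``half'' frequency $\tfrac n2$ must be treated as a genuine half-integer point of $\R/n\Z$, verify that ``$\mathcal I_r$ contains no $\sigma$-pair'' is captured \emph{exactly} by the two distance conditions of a), and pin down the precise count of $\sigma$-pairs and of the leftover index in the boundary cases of b). A secondary, routine, point of care is to carry out the orthonormality/independence bookkeeping honestly through the real Gaussian matrix $\bm\eta$ rather than manipulating complex Gaussians informally.
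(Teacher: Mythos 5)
Your proposal is correct and follows essentially the same route as the paper: it computes the covariance/relation structure of $\bm{\eta}V$, notes that column $s$ is the complex conjugate of column $\sigma(s)$ with the only self-conjugate frequencies $0$ and $\tfrac n2$, deduces (a) from the absence of conjugate pairs in the frequency window, and proves (b) by splitting conjugate-paired columns into (rescaled) real and imaginary parts plus a low-rank correction. Your bookkeeping is in fact a little sharper than the paper's (the reflection fixed-point argument for (a), $E=0$ in the $r=0$ and even-$n$ cases, rank at most $2$ in the odd case), but the underlying argument is the same.
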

\
\\
We will also require so-called local laws, which are a generalization of the Marchenko-Pastur law, where the difference between $\cs_{\nu_\infty}$ and the Stieltjes transform of the ESD of a sample covariance matrix is bounded. The following Lemma is a special case of Theorem 2.2 from \cite{GramErdos}. The main advantage for our purposes is that probabilities are bounded with sub-polynomial rates in $m$, which will prove necessary, if we wish to allow $\alpha \in (0,1)$ to be arbitrarily small.

\begin{lemma}[Application of local laws]\thlabel{Lemma_LocalLaw}\
\\
Suppose (A1) and (A3) of Assumption \ref{Assumption_Main} hold. Define $\hat{\nu}_n(\theta) \coloneq \hat{\mu}_{\frac{1}{2m+1}G(\theta)\bm{W}G(\theta)^*}$ and let $\nu_n(\theta)$ denote the probability measure defined by Lemma \ref{Lemma_MPEquation} for $H_n(\theta) = \hat{\mu}_{F^{(n)}(\theta)}$ and $c_n = \frac{d}{2m}$.
For a fixed $\tau \in (0,1)$ define the spectral domain
\begin{align*}
	& \bm{S}(\tau) \coloneq \big\{ z \in \C^+ \ \big| \ \Im(z) \geq \tau , \, |z| \leq \tau^{-1} \big\} \ .
\end{align*}
For every (small) $\delta \in (0,1)$ and (large) $D>0$ there exists a sequence $(a_n)_{n \in \N} \subset (0,1)$ converging to zero and a constant $C=C(\tau,\delta,D)>0$, which also depends on the constants $\cK_1,\cK_2$ and $\alpha$, such that:
\begin{itemize}
	\item[a)]
	For a $(d \times d)$ complex isotropic Wishart matrix $\bm{W} \sim CW_{d}(\operatorname{Id}_d,2m+1)$ it holds that
	\begin{align}\label{Eq_OuterLaw_complex}
		& \bP\Big( \exists r < n \, \exists z \in \bm{S}(\tau) : \  \big| \cs_{\hat{\nu}_n(\frac{2\pi r}{n})}(z) - \cs_{\nu_n(\frac{2\pi r}{n})}(z) \big| \geq a_n \Big) \leq \frac{C(\tau,\delta,D)}{n^D}
	\end{align}
	for every $n \in \N$.
	
	\item[b)]
	For a $(d \times d)$ real isotropic Wishart matrix $\bm{W} \sim W_{d}(\operatorname{Id}_d,2m+1)$ it holds that
	\begin{align}\label{Eq_OuterLaw_real}
		& \bP\Big( \exists z \in \bm{S}(\tau) : \ \big| \cs_{\hat{\nu}_n(\theta)}(z) - \cs_{\nu_n(\theta)}(z) \big| \geq a_n \Big) \leq \frac{C(\tau,\delta,\tilde{D})}{n^D}
	\end{align}
	for every $n \in \N$ and $\theta \in \{0,\pi\}$.
\end{itemize}
\end{lemma}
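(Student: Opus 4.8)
The plan is to deduce the statement from the local Marchenko--Pastur law of \cite{GramErdos} (their Theorem~2.2), the only genuine work being to make its hypotheses and conclusion uniform over the $n$ grid frequencies $\theta = \frac{2\pi r}{n}$. Write $N \coloneq 2m+1$. A complex (resp.\ real) isotropic Wishart matrix $\bm{W}$ can be represented as $\bm{W} = \bm{Z}\bm{Z}^*$ (resp.\ $\bm{W} = \bm{Z}\bm{Z}^\top$) with $\bm{Z}$ a $(d \times N)$ matrix of i.i.d.\ standard complex (resp.\ real) Gaussian entries, so that for each fixed $\theta$
\[
	\tfrac{1}{N}\,G(\theta)\,\bm{W}\,G(\theta)^* \;=\; \tfrac{1}{N}\big(G(\theta)\bm{Z}\big)\big(G(\theta)\bm{Z}\big)^* \;\overset{d}{=}\; \tfrac{1}{N}\,F^{(n)}(\theta)^{1/2}\,\bm{Z}\bm{Z}^*\,F^{(n)}(\theta)^{1/2}\ ,
\]
since every column of $G(\theta)\bm{Z}$ and of $F^{(n)}(\theta)^{1/2}\bm{Z}$ is centered Gaussian with covariance $G(\theta)G(\theta)^* = F^{(n)}(\theta)$ (for $\theta \in \{0,\pi\}$ the matrix $G(\theta)$, hence $F^{(n)}(\theta)$, is real, so part (b) really is the real separable model). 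Thus $\hat{\nu}_n(\theta)$ is the ESD of a separable sample covariance matrix with aspect ratio $d/N$ and population spectral law $H_n(\theta) = \hat{\mu}_{F^{(n)}(\theta)}$, which is exactly the regime governed by the Marchenko--Pastur equation (\ref{Eq_MPEquation}).

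I would then check that the hypotheses of Theorem~2.2 of \cite{GramErdos} hold with constants independent of $\theta$ and $n$. By (\ref{Eq_Assumption_LongRandeDependence_cK2}), $||G(\theta)|| \le ||\Psi_0|| + \sum_{k \ge 1}||\Psi_k|| \le ||\Psi_0|| + \sum_{k \ge 1} k||\Psi_k|| \le \cK_2$, so $||F^{(n)}(\theta)|| = ||G(\theta)||^2 \le \cK_2^2$ uniformly; by (A1) the aspect ratio $d/N$ stays in a fixed compact subset of $(0,\infty)$ and converges to $c/2$; and Gaussian entries satisfy any moment or sub-exponential requirement. Because $\bm{S}(\tau)$ keeps $\Im z \ge \tau$ bounded away from $0$, we sit in the most robust regime of the local law, and a possibly singular $F^{(n)}(\theta)$ (the only restriction from (A3) is $H_\infty(\theta) \neq \delta_0$, not invertibility) is harmless. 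Applying the theorem at a single fixed $r$ gives: for each $\delta \in (0,1)$ and each $D' > 0$ there are $C_1 = C_1(\tau,\delta,D')$ and a null sequence $b_n = b_n(\tau,\delta)$ (of order $\tau^{-1}N^{\delta-1} \asymp n^{(\delta-1)\alpha}$, with $\delta$ the free exponent in the stochastic-domination bound) with
\[
	\bP\Big( \exists z \in \bm{S}(\tau) : \ \big| \cs_{\hat{\nu}_n(\frac{2\pi r}{n})}(z) - \cs_{\nu_n'(\frac{2\pi r}{n})}(z) \big| \ge b_n \Big) \le C_1 N^{-D'}\ ,
\]
where $\nu_n'(\theta)$ is the measure of Lemma \ref{Lemma_MPEquation} for $H_n(\theta)$ and aspect ratio $d/N$. (If only the pointwise-in-$z$ version is available, the ``$\exists z$'' form follows from an $N^{-10}$-net of $\bm{S}(\tau)$ and the fact that both Stieltjes transforms are $\tau^{-2}$-Lipschitz there.)

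The rest is bookkeeping. The measure $\nu_n'(\theta)$ uses aspect ratio $d/(2m+1)$ whereas $\nu_n(\theta)$ uses $d/(2m)$, and these differ by $d/(2m(2m+1)) = O(1/m)$; since the solution of (\ref{Eq_MPEquation}) is Lipschitz in the aspect ratio, uniformly on the compact set $\bm{S}(\tau)$ and over population laws supported in $[0,\cK_2^2]$, one gets $\sup_{z \in \bm{S}(\tau)} | \cs_{\nu_n'(\theta)}(z) - \cs_{\nu_n(\theta)}(z) | \le C_2/m$, and absorbing this into the error defines $a_n \coloneq b_n + C_2/m \in (0,1)$ for $n$ large (with $C$ enlarged to cover small $n$). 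For (a), a union bound over the $n$ frequencies $r \in \{0,\dots,n-1\}$ with $D' \coloneq (D+1)/\alpha$ and $N \ge m \ge \cK_1^{-1}n^{\alpha}$ gives $n\,C_1 N^{-D'} \le C n^{-D}$, which is the claim. Part (b) is the same argument, only easier: sole frequencies $\theta \in \{0,\pi\}$, no growing union bound, and one quotes the real-symmetry version of the local law.

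The only step requiring care — and the reason \cite{GramErdos} is invoked rather than a classical global law — is that the probability bound must beat \emph{every} polynomial in $m$: after the union bound over $\asymp n$ frequencies and the substitution $m \asymp n^{\alpha}$ with $\alpha \in (0,1)$ possibly tiny, an $n^{-D}$ bound for arbitrary $D$ still has to come out, which is why $D'$ is taken as large as $(D+1)/\alpha$. The distributional reduction, the uniform bound on $||F^{(n)}(\theta)||$, and the $(2m+1)$-versus-$(2m)$ reconciliation are all routine.
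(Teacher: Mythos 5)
Your overall route is the same as the paper's: represent $\frac{1}{2m+1}G\bm{W}G^*$ as a Gaussian separable/Gram model, invoke Theorem~2.2 of \cite{GramErdos} at fixed $r$, reconcile the aspect ratios $\frac{d}{2m+1}$ and $\frac{d}{2m}$, and finish with a union bound over the $n$ frequencies, absorbing the loss by taking the exponent in the local law of order $D/\alpha$ (the paper chooses $\frac{\tilde D+1}{\alpha}$ in exactly this way). That bookkeeping, including the Lipschitz-in-$z$ / net remark and the $O(1/m)$ aspect-ratio correction, is fine and even slightly more explicit than the paper on the $2m$ versus $2m+1$ point.

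However, there is a genuine gap at the step you dismiss as ``harmless''. Theorem~2.2 of \cite{GramErdos} is not a pure upper-bound statement: its hypotheses include a \emph{lower} bound (flatness) condition on the variance profile, which for the model $\mathbb{X}=\frac{1}{\sqrt{2m+1}}\Sigma\bm{Z}$ (with $U_1\Sigma U_2$ the SVD of $G(\theta)$) amounts to a uniform lower bound on the singular values of $G(\theta)$ — in the paper's notation, condition (B) is verified with $\psi_1=\varepsilon^2$, $\psi_2=\frac{\varepsilon^2}{4\cK_1^2}$ only under the temporary assumption $\sigma_{\min}(G(\theta))\geq\varepsilon$ uniformly in $n$ and $\theta$. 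Assumption (A3) only excludes $H_\infty(\theta)=\delta_0$; it allows $F^{(n)}(\theta)$ to be singular, indeed to have a positive fraction of (near-)zero eigenvalues, in which case the flatness hypothesis fails and the theorem simply does not apply, regardless of $\Im z\geq\tau$. The constants would also not be uniform in $\theta$ if one tried to use the actual (possibly tiny) $\sigma_{\min}(G(\theta))$. The paper closes this hole with a nontrivial regularization argument: it proves the lemma first under the temporary lower bound, then replaces $G(\theta)$ by $G_\varepsilon(\theta)=U(\Sigma+\varepsilon\operatorname{Id}_d)V$, controls $\big|\big|\frac{1}{2m+1}G\bm{W}G^*-\frac{1}{2m+1}G_\varepsilon\bm{W}G_\varepsilon^*\big|\big|$ via a high-probability Wishart operator-norm bound, transfers this to the Stieltjes transforms (both the random ones, via $\frac{||A-B||}{\Im(z)^2}$, and the deterministic equivalents $\nu_n$ versus $\nu_{n,\varepsilon}$, via a meta-model argument), and finally lets $\varepsilon_n\to0$ slowly enough that the $\varepsilon$-dependent constants stay polynomially controlled — this is in fact where the sequence $a_n$ in the statement comes from. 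Your proposal needs an argument of this type (or a local law valid without any lower bound on the population spectrum) to be complete.
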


\begin{corollary}[Uniform Marchenko-Pastur law]\label{Cor_UniformMP}\
\\
Suppose Assumptions \ref{Assumption0_LinearProcess} and (A1)-(A3) of \ref{Assumption_Main} hold and that the innovations $(\eta_t)_{t \in \Z}$ are Gaussian. For each frequency $\theta \in [0,2\pi)$ let $\nu_\infty(\theta)$ be the probability measure defined as in Lemma \ref{Lemma_MPEquation} through $H_\infty(\theta)$ and $c$.
The uniform almost sure convergence
\begin{align*}
	& 1 = \bP\Big( \forall \theta \in [0,2\pi) : \ \hat{\mu}_{\tilde{S}'(\theta)} \xRightarrow{n \rightarrow \infty} \nu_\infty(\theta) \Big)
\end{align*}
holds.
\end{corollary}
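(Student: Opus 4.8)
The plan is to run the whole argument on grid frequencies $\tfrac{2\pi r}{n}$, using that by $(\ref{Eq_Def_tSbar0})$ the matrix $\tilde{S}'(\tfrac{2\pi r}{n})=\tfrac{1}{2m+1}G(\tfrac{2\pi r}{n})\,\bm{\eta}V D_r V^*\bm{\eta}^\top\,G(\tfrac{2\pi r}{n})^*$ is, up to a correction of bounded rank, a genuine Wishart-type matrix, and then feeding this into the local law of Lemma \ref{Lemma_LocalLaw}. Fix $\theta\in[0,2\pi)$ and set $r=r(\theta)$, so that $\hat\mu_{\tilde{S}'(\theta)}=\hat\mu_{\tilde{S}'(2\pi r/n)}$. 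Because $m\asymp n^{\alpha}$ with $\alpha<1$, for every $\theta\notin\{0,\pi\}$ one has $\rho_n(r,0)>m$ and $\rho_n(r,\lfloor n/2\rfloor)>m$ for all large $n$, and then Lemma \ref{Lemma_AlmostWishart}(a) shows that $\bm{\eta}V D_r V^*\bm{\eta}^\top$ is exactly an isotropic complex Wishart matrix $CW_d(\operatorname{Id}_d,2m+1)$, so $\hat\mu_{\tilde{S}'(\theta)}$ is precisely an instance of the measure $\hat\nu_n(2\pi r/n)$ of Lemma \ref{Lemma_LocalLaw}(a). The finitely many exceptional frequencies $\theta\in\{0,\pi\}$ are handled via Lemma \ref{Lemma_AlmostWishart}(b) (and, for $\theta=\pi$ when $n$ is odd, by the analogous low-rank real-Wishart representation), which expresses the relevant matrix as $\bm{W}+E$ with $\bm{W}$ a real isotropic Wishart matrix and $\operatorname{rank}(E)$ bounded by a constant; eigenvalue interlacing then bounds the Kolmogorov distance between $\hat\mu_{\tilde{S}'(\theta)}$ and $\hat\nu_n(\theta)$ by $3/d\to 0$, equivalently $|\cs_{\hat\mu_{\tilde{S}'(\theta)}}(z)-\cs_{\hat\nu_n(\theta)}(z)|\le 3\pi/(d\,\Im(z))$, so in all cases it suffices to control the Wishart measure $\hat\nu_n(\theta)$.

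Next I would upgrade the high-probability local law to an almost sure statement. Apply Lemma \ref{Lemma_LocalLaw} with a large fixed exponent $D$. In part (a) a single Wishart matrix is used for all $r$, whereas here the matrices $\bm{\eta}V D_r V^*\bm{\eta}^\top$ vary with $r$; a union bound over the at most $n$ generic frequencies loses only a factor $n$, giving $\bP(\exists\,\text{generic } r,\ \exists z\in\bm{S}(\tau):\ |\cs_{\hat\mu_{\tilde{S}'(2\pi r/n)}}(z)-\cs_{\nu_n(2\pi r/n)}(z)|\ge a_n)\le C\,n^{-(D-1)}$, together with the single-frequency version of part (b) at $\theta\in\{0,\pi\}$. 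Choosing $D$ so that these probabilities are summable in $n$, the Borel--Cantelli lemma shows that for each fixed $\tau=1/k$ one has, almost surely and for all large $n$, that the supremum over generic $r$ and over $z\in\bm{S}(1/k)$ of $|\cs_{\hat\mu_{\tilde{S}'(2\pi r/n)}}(z)-\cs_{\nu_n(2\pi r/n)}(z)|$ is at most $a_n\to 0$. Intersecting over $k\in\N$ produces a single almost sure event $\Omega_0$ on which $\cs_{\hat\mu_{\tilde{S}'(2\pi r/n)}}-\cs_{\nu_n(2\pi r/n)}\to 0$ locally uniformly on $\C^+$, uniformly over generic $r$, and likewise at $\theta\in\{0,\pi\}$.

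It remains to identify the limit, which is deterministic. The map $(H,c')\mapsto\nu$ from Lemma \ref{Lemma_MPEquation} is continuous for weak convergence (by the uniqueness statement in that lemma together with a compactness argument), and by (A1) the aspect ratios converge while by (A3) $H_n(\theta)\Rightarrow H_\infty(\theta)$; together with $|[\theta]_n-\theta|\le\pi/n\to 0$ and the continuity in $\theta$ of $F^{(n)}(\theta)$ this yields $\nu_n(2\pi r(\theta)/n)\Rightarrow\nu_\infty(\theta)$. Hence on $\Omega_0$ we have, for every $\theta$, $\cs_{\hat\mu_{\tilde{S}'(\theta)}}(z)\to\cs_{\nu_\infty(\theta)}(z)$ for all $z\in\C^+$; since the left-hand sides are Stieltjes transforms of probability measures and $\nu_\infty(\theta)$ is itself a probability measure, the Stieltjes continuity theorem (which also supplies the required tightness) gives $\hat\mu_{\tilde{S}'(\theta)}\Rightarrow\nu_\infty(\theta)$ simultaneously for all $\theta\in[0,2\pi)$, which is the claim.

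The random-matrix substance is entirely contained in Lemmas \ref{Lemma_AlmostWishart} and \ref{Lemma_LocalLaw}, so the main work is organizational: converting the polynomial-rate, uniform-in-$r$ local law into one almost sure event valid at every frequency — accounting both for the $r$-dependence of the Wishart matrices and for the $O(m)$ near-degenerate windows, which all collapse onto $\{0,\pi\}$ as $n\to\infty$ — and then establishing the deterministic continuity of the Marchenko--Pastur map together with the control of $H_n$ at the moving grid point $[\theta]_n$ that is needed to pin down $\nu_\infty(\theta)$. I expect the latter, i.e. transferring the pointwise-in-$\theta$ hypothesis (A3) to the moving frequency $[\theta]_n$, to be the most delicate point if one wishes to avoid extra regularity hypotheses on the coefficients $\Psi_k$.
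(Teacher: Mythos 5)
Your overall route is the same as the paper's: identify $\tilde{S}'([\theta]_n)$ via Lemma \ref{Lemma_AlmostWishart} as an (almost) isotropic Wishart model, feed it into the local law of Lemma \ref{Lemma_LocalLaw}, upgrade to an almost sure statement by Borel--Cantelli (your explicit union bound over the at most $n$ grid frequencies, needed because the Wishart matrices vary with $r$, is a point the paper leaves implicit), let $\tau\searrow 0$, and identify the deterministic limit $\nu_\infty(\theta)$ through continuity of the Marchenko--Pastur map and the Stieltjes continuity theorem. Your closing worry about transferring (A3) to the moving frequency $[\theta]_n$ is unfounded: (A2) already supplies the needed regularity, since $\|G([\theta]_n)-G(\theta)\|\le\frac{2\pi}{n}\cK_2$ by (\ref{Eq_G_Continuity}), hence $\|F^{(n)}([\theta]_n)-F^{(n)}(\theta)\|=O(1/n)$ and $H_n([\theta]_n)\Rightarrow H_\infty(\theta)$ without extra hypotheses on the $\Psi_k$.

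There is, however, one concrete gap in your treatment of the exceptional frequencies, namely $\theta=\pi$ with $n$ odd (where $[\pi]_n=\pi\pm\frac{\pi}{n}\neq\pi$; your parenthetical about the analogous real-Wishart representation at that $r$ is correct, but it is not the whole issue). After removing the rank-$3$ matrix $E$ from Lemma \ref{Lemma_AlmostWishart}(b), what remains is $\frac{1}{2m+1}G([\pi]_n)\bm{W}G([\pi]_n)^*$, whereas Lemma \ref{Lemma_LocalLaw}(b) controls $\hat\nu_n(\pi)=\hat\mu_{\frac{1}{2m+1}G(\pi)\bm{W}G(\pi)^*}$, and that lemma cannot simply be invoked at $[\pi]_n$ because its proof uses that $G(\theta)$ is real for $\theta\in\{0,\pi\}$. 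The difference $\frac{1}{2m+1}\big(G([\pi]_n)\bm{W}G([\pi]_n)^*-G(\pi)\bm{W}G(\pi)^*\big)$ is not of bounded rank, so your interlacing bound of $3/d$ (equivalently $O(1/(d\,\Im z))$ on Stieltjes transforms) does not cover it. The paper closes exactly this step by combining (\ref{Eq_G_Continuity}) with the high-probability Wishart norm bound (\ref{Eq_WishartNormBound}), which makes the operator-norm error $O(1/n)$ and hence negligible for the ESDs; you need this (easy) additional argument, or an equivalent, to complete the exceptional case. Apart from that, and up to cosmetic differences (you compare ESDs by interlacing/Kolmogorov distance where the paper uses $d_{\operatorname{BL}}$ and Lemma \ref{Lemma_LowRankPerturbation}), your argument matches the paper's proof.
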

\begin{proof}\
\\
For every $\theta \in (0,\pi)\cup(\pi,2\pi)$ let $r_n(\theta)$ denote $\lfloor \frac{n\theta}{2\pi} + \frac{1}{2} \rfloor \in \{0,...,n-1\}$ such that $[\theta]_n = \frac{2\pi r_n(\theta)}{n}$. For large $n$, it by $m \asymp n^{\alpha}$ must hold that $\rho_n(r_n(\theta),0) > m$ and $\rho_n(r_n(\theta),\frac{n}{2}) > m$ and (a) of Lemma \ref{Lemma_AlmostWishart} together with (a) of Lemma \ref{Lemma_LocalLaw} by Borel-Cantelli yields
\begin{align*}
	& 1 = \bP\Big( \forall \theta \in (0,\pi)\cup(\pi,2\pi) \, \forall z \in \bm{S}(\tau) : \ \cs_{\hat{\mu}_{\tilde{S}'([\theta]_n)}}(z) - \cs_{\nu_n([\theta]_n)}(z) \xrightarrow{n \rightarrow \infty} 0 \Big)
\end{align*}
for all $\tau > 0$. Continuity of measures allows us to take the limit for $\tau \searrow 0$ and the convergence $\cs_{\nu_n([\theta]_n)}(z) \xrightarrow{n \rightarrow \infty} \cs_{\nu_\infty(\theta)}(z), \, \forall z \in \C^+$ follows from (\ref{Eq_Assumption_ESD_Convergence}) and well-known analytic properties of the Stieltjes transform. It follows that
\begin{align*}
	& 1 = \bP\Big( \forall \theta \in (0,\pi)\cup(\pi,2\pi) \, \forall z \in \C^+ : \ \cs_{\hat{\mu}_{\tilde{S}'([\theta]_n)}}(z) \xrightarrow{n \rightarrow \infty} \cs_{\nu_\infty(\theta)}(z) \Big)
\end{align*}
and, as it is well known that point-wise convergence of Stieltjes transforms implies weak convergence of the corresponding probability measures (see Theorem 2.4.4 of \cite{AndersonIRM}), we have shown
\begin{align}\label{Eq_ApproxUniformMarchenkoPastur1}
	& 1 = \bP\Big( \forall \theta \in (0,\pi)\cup(\pi,2\pi) : \ \hat{\mu}_{\tilde{S}'([\theta]_n)} \xRightarrow{n \rightarrow \infty} \nu_\infty(\theta) \Big) \ .
\end{align}
For $\theta \in \{0,\pi\}$ there by (b) of Lemma \ref{Lemma_AlmostWishart} exists a Hermitian matrix $E_n(\theta)$ of rank no greater than $3$ such that
\begin{align}\label{Eq_S_tildeBar_Decomp}
	& \tilde{S}'([\theta]_n) = \frac{1}{2m+1} G([\theta]_n) \bm{W} G([\theta]_n)^* + E_n(\theta)
\end{align}
for some isotropic real Wishart matrix $\bm{W} \sim W_d(\operatorname{Id}_d,2m+1)$. Statement (b) of Lemma \ref{Lemma_LocalLaw} by the same arguments as above gives
\begin{align}\label{Eq_ApproxUniformMarchenkoPastur2}
	& 1 = \bP\Big( \forall \theta \in \{0,\pi\} : \ \hat{\mu}_{\frac{1}{2m+1} G(\theta) \bm{W} G(\theta)^*} \xRightarrow{n \rightarrow \infty} \nu_\infty(\theta) \Big) \ .
\end{align}
It remains to follow
\begin{align}\label{Eq_ApproxUniformMarchenkoPastur3}
	& 1 = \bP\Big( \forall \theta \in \{0,\pi\} : \ \hat{\mu}_{\tilde{S}'([\theta]_n)} \xRightarrow{n \rightarrow \infty} \nu_\infty(\theta) \Big) \ ,
\end{align}
which we do by bounding the bounded Lipschitz distance
\begin{align*}
	& d_{\operatorname{BL}}\big(\hat{\mu}_{\tilde{S}'([\theta]_n)},\hat{\mu}_{\frac{1}{2m+1} G(\theta) \bm{W} G(\theta)^*}\big)
\end{align*}
in high probability. The trivial continuity property
\begin{align}\label{Eq_G_Continuity}
	& ||G(\tau)-G(\tau')|| \leq \sum\limits_{k=0}^\infty \big| e^{-ik\tau} - e^{-ik\tau'} \big| \, ||\Psi_k|| \leq \sum\limits_{k=0}^\infty \big| e^{-i\tau} - e^{-i\tau'} \big| \, k \, ||\Psi_k|| \nonumber\\
	& \leq \sum\limits_{k=0}^\infty \operatorname{dist}(\tau+2\pi\Z,\tau'+2\pi\Z) \, k \, ||\Psi_k|| = \frac{2\pi}{n} \rho_n\Big(\frac{n \tau}{2\pi}, \frac{n \tau'}{2\pi}\Big) \sum\limits_{k=0}^\infty k \, ||\Psi_k||
\end{align}
by (\ref{Eq_Assumption_LongRandeDependence_cK2}) yields $||G([\theta]_n) - G(\theta)|| \leq \frac{2\pi}{n} \cK_2$.
The operator norms of isotropic (complex or real) Wishart matrices $\bm{W}$ are very well understood and for example Theorem 2 of \cite{LedouxRider} may be used to show the existence of a constant
$C''(D)>0$ only dependent on $D$ and $\cK_1$ such that
\begin{align}\label{Eq_WishartNormBound}
	& \bP\Big( \frac{\big|\big| \bm{W} \big|\big|}{2m+1} \geq C''(D) \Big) \leq \frac{C''(D)}{n^{D}}
\end{align}
holds for all $n \in \N$. This results in the bound
\begin{align*}
	& \bP\Big( \Big|\Big| \frac{1}{2m+1} G([\theta]_n) \bm{W} G([\theta]_n)^* - \frac{1}{2m+1} G(\theta) \bm{W} G(\theta)^* \Big|\Big| \geq \frac{4\pi\cK_2^2}{n} C''(D) \Big)\\
	& \overset{\text{(\ref{Eq_Assumption_LongRandeDependence_cK2})}}{\leq} \bP\Big( \frac{2\cK_2}{2m+1} \big|\big| \bm{W} \big|\big| \, \big|\big| G([\theta]_n) - G(\theta) \big|\big| \geq \frac{4\pi\cK_2^2}{n} C''(D) \Big)\\
	& \overset{\text{(\ref{Eq_G_Continuity})}}{\leq} \bP\Big( \frac{4\pi\cK_2^2}{n(2m+1)} \big|\big| \bm{W} \big|\big| \geq \frac{4\pi\cK_2^2}{n} C''(D) \Big) \leq \frac{C''(D)}{n^D} \ .
\end{align*}
By Borel-Cantelli and Lemma \ref{Lemma_LowRankPerturbation} we have finally shown
\begin{align}\label{Eq_ApproxUniformMarchenkoPastur4}
	& 1 = \bP\Big( \forall \theta \in \{0,\pi\} : \ d_{\operatorname{BL}}\big(\hat{\mu}_{\tilde{S}'([\theta]_n)},\hat{\mu}_{\frac{1}{2m+1} G(\theta) \bm{W} G(\theta)^*}\big) \xrightarrow{n \rightarrow \infty} 0 \Big)
\end{align}
and the fact that $d_{\operatorname{BL}}$ metricizes weak convergence of probability measures (see Theorem 1.12.4 of \cite{wellner2013weak}) means (\ref{Eq_ApproxUniformMarchenkoPastur1}), (\ref{Eq_ApproxUniformMarchenkoPastur2}) and (\ref{Eq_ApproxUniformMarchenkoPastur4}) together lead to the wanted result.
\end{proof}
\
\\
The property
\begin{align*}
	& 1 = \bP\Big( \forall \theta \in [0,2\pi) : \ \hat{\mu}_{S(\theta)} \xRightarrow{n \rightarrow \infty} \nu_\infty(\theta) \Big)
\end{align*}
then follows directly from Proposition \ref{Prop_ApproxSimpler} and Corollary \ref{Cor_UniformMP} by application of Borel-Cantelli and the fact that $d_{\operatorname{BL}}$ metricizes weak convergence of probability measures (see Theorem 1.12.4 of \cite{wellner2013weak}).\\
This concludes the proof of Theorem \ref{Thm_MarchenkoPastur_Daniell} in the Gaussian case. \qed

\section{Proof of Proposition \ref{Prop_ApproxSimpler}}

The bounded Lipschitz metric $d_{BL}$ is used for this result, since it is known to metricize weak convergence of probability measures and is also robust under low-rank perturbations in the sense of the following lemma.

\begin{lemma}[Low rank perturbation for Hermitian matrices]\thlabel{Lemma_LowRankPerturbation}\
	\\
	For two Hermitian matrices $A,E \in \C^{d \times d}$ the bounded-Lipschitz distance $d_{\operatorname{BL}}$ satisfies the bound $d_{\operatorname{BL}}(\hat{\mu}_{A},\hat{\mu}_{A+E}) \leq \frac{8\mathrm{rank}(E)}{d}$.
\end{lemma}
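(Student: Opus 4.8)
The plan is to reduce the general statement to a single rank-one perturbation, control the two spectra by interlacing, and then estimate the bounded-Lipschitz metric from the resulting control on the empirical distribution functions. The one genuinely delicate point will be this last conversion, and in particular extracting the factor $1/d$.

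\emph{Reduction to rank one.} Since $E$ is Hermitian, write its spectral decomposition $E = \sum_{i=1}^{r} \mu_i u_i u_i^*$ with $r = \operatorname{rank}(E)$, put $A^{(0)} = A$ and $A^{(i)} = A^{(i-1)} + \mu_i u_i u_i^*$, so that $A^{(r)} = A + E$. By the triangle inequality for $d_{\operatorname{BL}}$ it suffices to prove $d_{\operatorname{BL}}(\hat{\mu}_{A^{(i-1)}}, \hat{\mu}_{A^{(i)}}) \le 8/d$ for each single rank-one step and to add the $r$ contributions, noting $\sum_i\operatorname{rank}(\mu_i u_i u_i^*) = \operatorname{rank}(E)$. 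From now on $E = \mu u u^*$ with, without loss of generality, $\mu \ge 0$ (the case $\mu \le 0$ being symmetric).

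\emph{Interlacing.} Ordering eigenvalues decreasingly and writing $a_j = \lambda_j(A)$, $b_j = \lambda_j(A+\mu u u^*)$, the standard interlacing for a positive semidefinite rank-one bump gives $a_j \le b_j \le a_{j-1}$ for all $j$ (with the convention $a_0 = +\infty$), i.e. $b_1 \ge a_1 \ge b_2 \ge \cdots \ge b_d \ge a_d$. Consequently the empirical distribution functions $F_A = F_{\hat{\mu}_A}$ and $F_{A+E} = F_{\hat{\mu}_{A+E}}$ satisfy $F_{A+E} \le F_A \le F_{A+E} + \tfrac1d$ everywhere, they coincide outside $[a_d,b_1]$, and $F_A - F_{A+E}$ is constant on each interval between consecutive $a_j$'s.

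\emph{From the distribution functions to $d_{\operatorname{BL}}$ — the main obstacle.} For $f \in \operatorname{Lip}_1$, integration by parts (legitimate because $F_A - F_{A+E}$ vanishes at both ends of $[a_d,b_1]$) gives $\int f\, d(\hat{\mu}_A - \hat{\mu}_{A+E}) = -\int_{a_d}^{b_1} f'(x)\,\bigl(F_A(x) - F_{A+E}(x)\bigr)\,dx$, where only $f'$ enters. The difference $F_A - F_{A+E}$ is a step function with values in $[0,\tfrac1d]$, constant on each of the at most $d$ sub-intervals $B$ cut out by the $a_j$; the contribution of such a $B$ is at most $\tfrac1d$ times the increment $|f(y)-f(x)|$ of $f$ between the endpoints $x<y$ of $B$, which is $\le \tfrac1d\min\{2,|B|\}$ (the $2$ using $\|f\|_\infty\le 1$, the $|B|$ using that $f$ is $1$-Lipschitz). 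Summing over blocks already yields $d_{\operatorname{BL}}(\hat{\mu}_A,\hat{\mu}_{A+E}) \le 2$ for one rank-one step, hence $\le 2\operatorname{rank}(E)$ in general, but not yet the asserted $1/d$ improvement. Securing that improvement is the heart of the argument: one must show, using the interlacing bounds $0\le b_j-a_j\le a_{j-1}-a_j$ together with the two-sided constraint on $f$, that only $O(1)$ of the blocks can be near saturation — equivalently, performing an Abel-type summation of $\sum_j\bigl(f(b_j)-f(a_j)\bigr)$ along the chain $b_1\ge a_1\ge b_2\ge\cdots\ge b_d\ge a_d$, that a function with $\|f\|_\infty\le1$ and Lipschitz constant $\le1$ cannot be simultaneously near its supremum at many of the $b_j$ and near its infimum at the neighbouring $a_j$. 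Reconciling the supremum bound and the Lipschitz bound against the geometry of the interlaced spectrum is the step I expect to be the main obstacle; once it is carried out with the constant $8$ (which should leave room to spare), the general-rank bound follows by summing the $\operatorname{rank}(E)$ rank-one steps.
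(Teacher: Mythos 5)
Your reduction to rank-one steps and the interlacing control are sound, but the proposal is not a proof: the step you yourself call the heart of the argument --- that only $O(1)$ of the blocks can be near saturation --- is left open, and it is in fact false, so this route cannot be completed. A single rank-one perturbation can shift \emph{every} eigenvalue by a fixed amount when the spectrum is spread out: take $A=\operatorname{diag}(0,4,8,\dots,4(d-1))$, so $a_j=4(j-1)$, and prescribe targets $\mu_j=a_j+2$; since $a_j<\mu_j<a_{j+1}$, the classical inverse interlacing construction (secular equation) produces $w\in\R^d$ with $w_j^2=\prod_k(\mu_k-a_j)/\prod_{k\neq j}(a_k-a_j)>0$ such that $A+ww^\top$ has eigenvalues exactly $\mu_1,\dots,\mu_d$. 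Now let $f$ be the $1$-Lipschitz triangle wave of period $4$ with $f(4k)=-1$ and $f(4k+2)=+1$; then $f\in\operatorname{Lip}_1$ and $\int f\,d\big(\hat{\mu}_{A+ww^\top}-\hat{\mu}_{A}\big)=\frac1d\sum_j\big(f(\mu_j)-f(a_j)\big)=2$, so $d_{\operatorname{BL}}(\hat{\mu}_{A},\hat{\mu}_{A+ww^\top})=2$ although the perturbation has rank one --- exceeding $8/d$ as soon as $d>4$. All $d$ blocks saturate simultaneously, the Abel-summation/counting step you hoped for is impossible, and only your trivial bound $2\operatorname{rank}(E)$ survives: interlacing alone, with no control on the spectral diameter, cannot produce a $1/d$.

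For comparison, the paper's own argument does not split $E$ into rank-one pieces; it writes a test function $F\in\operatorname{Lip}_1$ as a difference of two non-decreasing functions via its total variation $V$ on $[-K,K]$, $K=\max(\|A\|,\|A+E\|)$, and telescopes over the $\operatorname{rank}(E)$-shifted interlacing $\lambda_{j-\rho}(A)\ge\lambda_j(A+E)\ge\lambda_{j+\rho}(A)$, leaving only $O(\operatorname{rank}(E))$ boundary terms; the constant $8$ is then obtained by asserting that these monotone pieces lie in $\operatorname{Lip}_2$, i.e.\ are uniformly bounded by $2$. That is exactly the point your ``reconciling the supremum bound with the Lipschitz bound'' worries about, and the example above (where $V$ grows like the spectral diameter, of order $d$) shows it does not hold in this generality: the delicate point you flagged is real, and neither your counting strategy nor the bare statement survives it. What both routes do yield is a bound of the form $d_{\operatorname{BL}}(\hat{\mu}_{A},\hat{\mu}_{A+E})\le\min\{2,\,C(1+K)\operatorname{rank}(E)/d\}$ with $K=\max(\|A\|,\|A+E\|)$, which is the version actually compatible with how the lemma is invoked in the paper, where the relevant operator norms are bounded by constants with high probability. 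In short: you located the genuine obstacle correctly, but the proposal does not overcome it, and as stated no argument can.
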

\
\\
Define the $(d \times n)$ matrix $\tilde{\bm{X}}$ by its columns as
\begin{align}\label{Eq_Def_tX}
	& \tilde{\bm{X}} \coloneq \bigg[ \sum\limits_{k=0}^{\infty} \Psi_k \eta_{(0-k) \mod n} , ... , \sum\limits_{k=0}^{\infty} \Psi_k \eta_{(n-1-k) \mod n} \bigg] \ .
\end{align}
The following lemma applies a neighboring technique from time series analysis to show that the ESD of
\begin{align}\label{Eq_Def_tS}
	& \tilde{S}\Big(\frac{2\pi r}{n}\Big) \coloneq \frac{1}{2m+1} \tilde{\bm{X}} V D_r V^* \tilde{\bm{X}}^\top
\end{align}
approximates the ESD of the Daniell smoothed periodogram. This approximation method becomes applicable in the high-dimensional setting by application of our Theorem \ref{Thm_TraceMomentBound}.

\begin{lemma}[Approximation by neighboring]\thlabel{Lemma_ApproxNeighboring}\
\\
Suppose Assumptions \ref{Assumption0_LinearProcess} and (A1) of \ref{Assumption_Main} hold. Additionally, assume the innovations $(\eta_t)_{t \in \Z}$ to be Gaussian.\\
For any (small) $\delta>0$ and (large) $D>0$ there exists a constant $C'=C'(\delta,D)>0$, which also depends on the constants $\cK_1,\alpha,\gamma$ from Assumption \ref{Assumption_Main}, such that
\begin{align}\label{Eq_ApproxNei_Result1}
	& \bP\Big( d_{\operatorname{BL}}\big( \hat{\mu}_{S(\frac{2\pi r}{n})}, \hat{\mu}_{\tilde{S}(\frac{2\pi r}{n})} \big) \geq \frac{48K}{d} + \varepsilon_{\Psi}(K) \, n^{\delta + \alpha} \Big) \leq C' n^{-D}
\end{align}
holds with
\begin{align*}
	& \varepsilon_{\Psi}(K) \coloneq 4 \Big(\sum\limits_{k=K+1}^\infty ||\Psi_k||\Big) \Big(\sum\limits_{k=0}^\infty ||\Psi_k||\Big)
\end{align*}
for all $n \in \N$ and $r,K \in \{0,...,n-1\}$.
\end{lemma}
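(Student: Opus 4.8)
The plan is to control the bounded-Lipschitz distance between the ESDs of $S(\tfrac{2\pi r}{n})$ and $\tilde S(\tfrac{2\pi r}{n})$ via a rank bound plus an operator-norm bound, exploiting the fact that $d_{\operatorname{BL}}$ is robust under both low-rank perturbations (Lemma~\ref{Lemma_LowRankPerturbation}) and small-norm perturbations. Writing $\bm X = \tilde{\bm X} + (\bm X - \tilde{\bm X})$, observe that $\bm X - \tilde{\bm X}$ has columns $\sum_{k=0}^\infty \Psi_k(\eta_{t-k} - \eta_{(t-k)\bmod n})$, so the $t$-th column only differs from zero through innovations $\eta_{t-k}$ with $t-k \notin \{0,\dots,n-1\}$, i.e.\ essentially $k \gtrsim t$ or $k$ large. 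I would split at a cut-off $K$: let $\bm X = \bm X^{\le K} + \bm X^{>K}$ where $\bm X^{\le K}$ keeps only the summands with $k \le K$, and similarly for $\tilde{\bm X}$. The point is that $\bm X^{\le K}$ and $\tilde{\bm X}^{\le K}$ agree except in the first $K$ and last $K$ columns (those are the only columns for which wrapping modulo $n$ can touch a retained innovation index), so $\bm X^{\le K} - \tilde{\bm X}^{\le K}$ has rank at most $2K$. Feeding this into $S = \tfrac{1}{2m+1}\bm X V D_r V^* \bm X^\top$ and using the cyclicity/rank subadditivity of the trace, the rank-$2K$ discrepancy contributes a perturbation of rank $O(K)$ to the matrices whose ESDs we compare, which by Lemma~\ref{Lemma_LowRankPerturbation} costs at most $O(K/d)$ in $d_{\operatorname{BL}}$; this yields the $48K/d$ term.

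Next I would handle the high-order tails $\bm X^{>K}$ and $\tilde{\bm X}^{>K}$. These are genuine full-rank errors, so the rank bound is useless and one must bound them in operator norm. This is where Theorem~\ref{Thm_TraceMomentBound} enters: $\bm Y := \bm X^{>K}$ (resp.\ its wrapped analogue) is a centered Gaussian $(d\times n)$ matrix, and its column auto-covariances $A_{s,s'} = \E[\bm Y_{\bullet,s}\bm Y_{\bullet,s'}^\top]$ satisfy $\|A_{s,s'}\| \le \big(\sum_{k>K}\|\Psi_k\|\big)\big(\sum_{k\ge 0}\|\Psi_k\|\big)$ up to the usual cross terms (each column is $\sum_{k>K}\Psi_k\eta_{t-k}$, and Cauchy--Schwarz on the innovation overlaps gives this product bound uniformly in $s,s'$), so $\|B\| \le n \cdot \big(\tfrac14 \varepsilon_\Psi(K)\big)$ crudely, or better, $\|B\|$ is controlled by a summable-overlap / mixing argument giving $\|B\| \lesssim \varepsilon_\Psi(K)$ times a constant; in either case Theorem~\ref{Thm_TraceMomentBound} gives $\E[\tr((\bm Y\bm Y^\top)^L)] \le \kappa^L (2L-1)!!(d+n)^{L+1}$ with $\kappa$ proportional to $\varepsilon_\Psi(K)$ (the relevant scale after dividing by $2m+1 \asymp n^\alpha$). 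A Markov bound with $L$ chosen large (of order $1/\delta$) then shows $\|\tfrac{1}{2m+1}\bm X^{>K} V D_r V^* (\bm X^{>K})^\top\| \le \varepsilon_\Psi(K)\, n^{\delta+\alpha}$ outside an event of probability $\le C' n^{-D}$; the mixed cross-term $\tfrac{1}{2m+1}\bm X^{\le K} V D_r V^*(\bm X^{>K})^\top + (\text{h.c.})$ is handled by $\|UV^*\| \le \tfrac12(\|UU^*\| + \|VV^*\|)$ together with a similar (easier) norm bound on the low-order part, so it is absorbed into the same $\varepsilon_\Psi(K)\,n^{\delta+\alpha}$ term. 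Since $V D_r V^*$ is a projection, conjugation by it does not inflate norms, and the same arguments apply to the wrapped matrix $\tilde{\bm X}$.

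Assembling: by the perturbation stability of $d_{\operatorname{BL}}$ (rank part from Lemma~\ref{Lemma_LowRankPerturbation}, norm part from the elementary fact that $\|A_1 - A_2\|$ small forces $d_{\operatorname{BL}}(\hat\mu_{A_1},\hat\mu_{A_2})$ small, via a $1$-Lipschitz test function) we get, on the intersection of the good events,
\[
 d_{\operatorname{BL}}\big(\hat\mu_{S(\frac{2\pi r}{n})},\hat\mu_{\tilde S(\frac{2\pi r}{n})}\big) \le \frac{48K}{d} + \varepsilon_\Psi(K)\,n^{\delta+\alpha},
\]
and a union bound over the two (or three) error events plus the fact that they each have probability $\le C'n^{-D}$ (absorbing constants into $C'$) finishes the proof, with $r$ and $K$ arbitrary in $\{0,\dots,n-1\}$ as claimed. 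The main obstacle is the operator-norm control of the high-order tail: naively $\bm X^{>K}$ is a full-rank Gaussian matrix with correlated columns and correlated entries, so no elementary bound on its largest singular value is available; it is precisely Theorem~\ref{Thm_TraceMomentBound}, together with the correct identification of $\kappa$ as (a constant times) $\varepsilon_\Psi(K)$ via the mixing/Cauchy--Schwarz estimate on the $A_{s,s'}$, that makes this step go through, and getting the exponent $\delta+\alpha$ (rather than something worse) requires dividing by the sharp normalization $2m+1\asymp n^\alpha$ and choosing $L$ large enough that $(d+n)^{1/L}$ and $(2L-1)!!^{1/L}$ are both $n^{o(1)}$.
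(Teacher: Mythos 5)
There is a genuine quantitative gap in the operator-norm step, and it is exactly the step the lemma's exponent $n^{\delta+\alpha}$ hinges on. You apply Theorem \ref{Thm_TraceMomentBound} to the raw $(d\times n)$ tail matrix $\bm X^{>K}$, i.e.\ with $M=n$, and only afterwards conjugate by the projection $VD_rV^*$ and divide by $2m+1$. With $M=n$ the Markov/trace-moment bound forces a threshold of order $\kappa\,(d+n)\,n^{\delta}\asymp\kappa\,n^{1+\delta}$ for the squared norm (the factor $(d+n)^{L+1}$ cannot be beaten by taking $L$ large), so after dividing by $2m+1\asymp n^{\alpha}$ you obtain at best $\varepsilon_\Psi(K)\,n^{1+\delta-\alpha}$, not $\varepsilon_\Psi(K)\,n^{\delta+\alpha}$. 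These agree only when $\alpha\geq\tfrac12$; the lemma is stated (and later needed, in the Gaussian case where no lower bound on $\alpha$ is assumed) for all $\alpha\in(0,1)$, and with the paper's later choice $K\asymp n^{(\delta+2\alpha)/(1+\gamma)}$ your exponent $1+\delta-\alpha$ would even make the error diverge for small $\alpha$. The paper avoids this by applying the trace-moment bound to the already projected and normalized matrices (real and imaginary parts of $\tfrac{1}{\sqrt{2m+1}}(\cdot)VD_r^{1/2}$), which have only $2m+1$ effectively nonzero columns, so $M=2m+1\asymp n^{\alpha}$ while the column covariances carry the factor $\tfrac{1}{2m+1}$; this is what produces the $(d+2m+1)^{L+1}$ factor and hence the rate $n^{\delta+\alpha}$. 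Your remark that ``conjugation by a projection does not inflate norms'', while true, is precisely what throws away this reduction of the effective column count.

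A second flaw is the cross term: bounding $\|UV^*\|\leq\tfrac12(\|UU^*\|+\|VV^*\|)$ destroys the smallness in $K$, because the low-order block contributes $\|Z^{\le K}\|^2$, which carries no factor $\sum_{k>K}\|\Psi_k\|$ at all (it is of order $(\sum_k\|\Psi_k\|)^2 n^{\delta+\alpha}$ under the same crude bound). You must keep the product structure $\|U\|\,\|V\|$ and bound the two factors separately at scales $(\sum_{k>K}\|\Psi_k\|)\,n^{(\delta+\alpha)/2}$ and $(\sum_{k\ge0}\|\Psi_k\|)\,n^{(\delta+\alpha)/2}$ in high probability; this product is exactly where the constant $\varepsilon_\Psi(K)=4(\sum_{k>K}\|\Psi_k\|)(\sum_{k\ge0}\|\Psi_k\|)$ comes from in the paper (there via the sum/difference factorization of $Z_2Z_2^*-\tilde Z_2\tilde Z_2^*$, with $\varepsilon_-\varepsilon_+$). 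Your overall architecture (low-rank part controlled by Lemma \ref{Lemma_LowRankPerturbation}, full-rank tail controlled in operator norm via Theorem \ref{Thm_TraceMomentBound} plus a high-moment Markov bound), and your lag-cutoff decomposition in place of the paper's column split, are viable; but as written the two steps above do not yield the claimed bound, and fixing them essentially brings you back to the paper's argument (projected matrices with $M=2m+1$, real/imaginary parts separated since Theorem \ref{Thm_TraceMomentBound} is for real Gaussian matrices, and product-of-norms estimates for the mixed terms). Minor additional points: the difference $\bm X^{\le K}-\tilde{\bm X}^{\le K}$ is supported on the first $K$ columns only, and the specific constants $48K/d$ and $4$ in $\varepsilon_\Psi(K)$ would need to be rechecked for your decomposition.
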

\
\\
Assumption (A2) of \ref{Assumption_Main} may be used to bound $\varepsilon_{\Psi}(K) \leq 4\cK_2^2 K^{-\gamma}$. The property (\ref{Eq_ApproxNei_Result1}) will be most useful for the choice $K=\lfloor n^{\frac{\delta + 2\alpha}{1+\gamma}} \rfloor$. We assume here that $\delta > 0$ is small enough for $\frac{\delta+2\alpha}{1+\gamma} < 1$ to hold, such that this choice of $K \in \{0,...,n-1\}$ is valid. The bound
\begin{align*}
	& \frac{48K}{d} + \varepsilon_{\Psi}(K) \, n^{\delta + \alpha} \overset{\text{(\ref{Eq_Assumption_Asymptotics})}}{\leq} 48K \cK_2 n^{-\alpha} + \varepsilon_{\Psi}(K) \, n^{\delta + \alpha} \leq 48K \cK_2 n^{-\alpha} + 4\cK_2^2 K^{-\gamma} \, n^{\delta + \alpha}\\
	& \leq 48 n^{\frac{\delta + 2\alpha}{1+\gamma}} \cK_2 n^{-\alpha} + 4\cK_2^2 n^{-\gamma\frac{\delta + 2\alpha}{1+\gamma}} \, n^{\delta + \alpha} = \big( 48\cK_2 + 4\cK_2^2 \big) n^{\frac{\delta+(1-\gamma)\alpha}{1+\gamma}}
\end{align*}
for this choice of $K$ allows us to follow
\begin{align}\label{Eq_ApproxNei_Result2}
	& \bP\Big( d_{\operatorname{BL}}\big( \hat{\mu}_{S(\frac{2\pi r}{n})}, \hat{\mu}_{\tilde{S}(\frac{2\pi r}{n})} \big) \geq \big( 48\cK_2 + 4\cK_2^2 \big) n^{\frac{\delta+(1-\gamma)\alpha}{1+\gamma}} \Big) \leq C' n^{-D}
\end{align}
from (\ref{Eq_ApproxNei_Result1}).
\\
\\
It remains to show that $\tilde{S}(\frac{2\pi r}{n})$ is approximated sufficiently well by $\tilde{S}'(\frac{2\pi r}{n})$ as defined in (\ref{Eq_Def_tSbar0}). We do this in the following lemma, where it is once more Theorem \ref{Thm_TraceMomentBound} that allows for the application of the continuity property (\ref{Eq_G_Continuity}) in this high-dimensional setting.

\begin{lemma}[Approximation by continuity]\thlabel{Lemma_ApproxContinuity}\
\\
Suppose Assumptions \ref{Assumption0_LinearProcess} and (A1) of \ref{Assumption_Main} hold. Additionally assume the innovations $(\eta_t)_{t \in \Z}$ to be Gaussian.\\
For any (small) $\delta>0$ and (large) $D>0$ there exists a constant $C''=C''(\delta,D)>0$, which also depends on the constants $\cK_1,\alpha,\gamma$ from Assumption \ref{Assumption_Main}, such that
\begin{align}\label{Eq_ApproxContinuity_result}
	& \bP\Big( \Big|\Big| \tilde{S}\Big( \frac{2\pi r}{n} \Big) - \tilde{S}'\Big( \frac{2\pi r}{n} \Big)\Big|\Big| \geq \varepsilon_{\Psi} \, n^{\delta + \alpha-1} \Big) \leq C'' n^{-D}
\end{align}
holds with
\begin{align*}
	& \varepsilon_{\Psi} = 16 \bigg(\sum\limits_{k=0}^\infty k ||\Psi_k||\bigg) \bigg(\sum\limits_{k=0}^\infty ||\Psi_k||\bigg)
\end{align*}
for all $n \in \N$ and $r \in \{0,...,n-1\}$.
\end{lemma}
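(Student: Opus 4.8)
The plan is to pass to the frequency domain, where the circulant structure of $\tilde{\bm X}$ becomes pointwise multiplication by $G$, to write $\tilde S-\tilde S'$ as a product of two random matrices via polarization, and to bound the two factors --- the delicate one using Theorem \ref{Thm_TraceMomentBound}. I may assume $g_0:=\sum_{k}||\Psi_k||<\infty$ and $h_0:=\sum_{k}k||\Psi_k||<\infty$, since otherwise $\varepsilon_\Psi=\infty$ and there is nothing to prove; and I may assume $\delta<2(1-\alpha)$, since enlarging $\delta$ only weakens the claim. Fix $r$, abbreviate $\hat G:=G(\tfrac{2\pi r}{n})$, and let $P\subset\{1,\dots,n\}$ be the set of the $2m+1$ indices $p$ with $\rho_n(r,p-1)\le m$, so that $D_r$ is the coordinate projection onto $P$. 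Since each column of $\tilde{\bm X}$ is a circular convolution of the columns of $\bm\eta$ with the filter $(\Psi_k)_k$, conjugating by the unitary DFT matrix $V$ diagonalizes it: a direct computation with the index $(l-k)\mod n$ gives
\[
(\tilde{\bm X}V)_{\bullet,p}=G\big(\tfrac{2\pi(p-1)}{n}\big)(\bm\eta V)_{\bullet,p}\qquad(p=1,\dots,n),
\]
so, writing $\xi_p:=(\bm\eta V)_{\bullet,p}$ and $G_p:=G(\tfrac{2\pi(p-1)}{n})$, one gets $(2m+1)\tilde S(\tfrac{2\pi r}{n})=\sum_{p\in P}G_p\xi_p\xi_p^*G_p^*$ and $(2m+1)\tilde S'(\tfrac{2\pi r}{n})=\sum_{p\in P}\hat G\xi_p\xi_p^*\hat G^*$.

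Next I would polarize using $AXA^*-BXB^*=\tfrac12\big[(A+B)X(A-B)^*+(A-B)X(A+B)^*\big]$ with $A=G_p$, $B=\hat G$. Setting $\Delta_p:=G_p-\hat G$, $\Xi:=[\xi_p]_{p\in P}$, $\Xi_0:=[\Delta_p\xi_p]_{p\in P}$ and $\Xi_1:=[(G_p+\hat G)\xi_p]_{p\in P}=2\hat G\,\Xi+\Xi_0$, this gives $(2m+1)(\tilde S-\tilde S')=\tfrac12(\Xi_1\Xi_0^*+\Xi_0\Xi_1^*)$, hence
\[
||\tilde S-\tilde S'||\;\le\;\frac{1}{2m+1}\,||\Xi_1||\,||\Xi_0||\,,
\]
while the continuity bound (\ref{Eq_G_Continuity}) together with $\rho_n(r,p-1)\le m$ gives $||\Delta_p||\le\tfrac{2\pi m}{n}h_0$ for every $p\in P$. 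The factor $||\Xi||$ is easy: the nonzero columns of $VD_r$ are orthonormal columns of $V$, so $\Xi$ has the same singular values as a $d\times(2m+1)$ matrix whose real and imaginary parts are each a Gaussian matrix with i.i.d.\ $N(0,1)$ entries composed with a contraction; the Wishart norm estimate (\ref{Eq_WishartNormBound}) then gives $||\Xi||\lesssim\sqrt m$ with probability at least $1-Cn^{-D}$, and on the same event $||\Xi_1||\le 2g_0||\Xi||+||\Xi_0||\lesssim g_0\sqrt m$ once $||\Xi_0||=o(\sqrt m)$ is established.

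The heart of the argument, and the only place Theorem \ref{Thm_TraceMomentBound} is used, is the bound on $||\Xi_0||$. Write $\Xi_0=\bm A+i\bm B$ with $\bm A,\bm B$ real, so that $\bm Z:=[\bm A\mid\bm B]$ is a real centered Gaussian $d\times 2(2m+1)$ matrix with $||\Xi_0||\le\sqrt2\,||\bm Z||$. Each column of $\bm Z$ is a fixed linear combination of $\bm\eta c_p$ and $\bm\eta s_p$, where $c_p,s_p$ are the real and imaginary parts of the $p$-th DFT column; since $\E[(\bm\eta c)(\bm\eta c')^\top]=\langle c,c'\rangle\,\operatorname{Id}_d$ for fixed vectors $c,c'$, and since the orthogonality relations of the DFT basis force $\langle c_p,c_{p'}\rangle,\langle c_p,s_{p'}\rangle,\langle s_p,c_{p'}\rangle,\langle s_p,s_{p'}\rangle$ all to vanish unless $p=p'$ or $\{p,p'\}$ is a conjugate pair, the auto-covariance $\E[\bm Z_{\bullet,j}\bm Z_{\bullet,j'}^\top]$ vanishes unless columns $j,j'$ belong to the same frequency or to conjugate frequencies --- in which case its norm is at most $||\Delta_p||\,||\Delta_{p'}||\le(\tfrac{2\pi m}{n}h_0)^2$. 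Hence the matrix $B$ of Theorem \ref{Thm_TraceMomentBound} has at most four nonzero entries in each row, each bounded by $(\tfrac{2\pi m}{n}h_0)^2$, so $||B||\le\kappa:=4(\tfrac{2\pi m}{n}h_0)^2$. Theorem \ref{Thm_TraceMomentBound} then gives $\E[\tr((\bm Z\bm Z^\top)^L)]\le\kappa^L(2L-1)!!\,(d+2(2m+1))^{L+1}$ for all $L$, and Markov's inequality with a fixed $L=L(\delta,D)$ satisfying $\delta L\ge D+1$ yields $||\bm Z\bm Z^\top||\le\kappa\,(d+2(2m+1))\,n^{\delta}$ with probability at least $1-Cn^{-D}$, whence $||\Xi_0||\lesssim h_0\tfrac{m}{n}n^{(\alpha+\delta)/2}$, which is $o(\sqrt m)$ exactly because $\delta<2(1-\alpha)$.

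Combining the estimates, on the intersection of the above high-probability events
\[
||\tilde S-\tilde S'||\;\le\;\frac{||\Xi_1||\,||\Xi_0||}{2m+1}\;\lesssim\;\frac{g_0\sqrt m}{m}\cdot h_0\frac{m}{n}n^{(\alpha+\delta)/2}\;\asymp\;g_0h_0\,n^{\alpha-1+\delta/2}\,;
\]
since the implicit constant depends only on $\cK_1$ (and on $\alpha,\delta,D$), it is at most $16\,n^{\delta/2}$ for all $n$ beyond a threshold, so $||\tilde S-\tilde S'||\le 16\,g_0h_0\,n^{\alpha-1+\delta}=\varepsilon_\Psi\,n^{\alpha-1+\delta}$ there, and the finitely many remaining $n$ are absorbed into $C''$. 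The one real obstacle is the bound $||B||\le\kappa\asymp(m/n)^2h_0^2$: the naive estimate $||B||\lesssim\sum_{p\in P}||\Delta_p||^2\asymp(m/n)^2h_0^2\cdot m$ loses a factor $m\asymp n^{\alpha}$, which would replace the exponent $\alpha-1$ by $\tfrac{3\alpha}{2}-1$ and break the bound for every $\alpha\in(0,1)$; it is the exact vanishing of the off-diagonal auto-covariance blocks --- Fourier-basis orthogonality --- that lets Theorem \ref{Thm_TraceMomentBound} be fed the correct small $\kappa$, and the rest is routine Gaussian concentration and constant bookkeeping.
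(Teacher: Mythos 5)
Your route is, in substance, the paper's: the frequency-domain identity $(\tilde{\bm X}V)_{\bullet,s}=G(\tfrac{2\pi(s-1)}{n})(\bm\eta V)_{\bullet,s}$, a polarization of $\tilde S-\tilde S'$ into a ``difference'' factor and a ``sum'' factor, the observation that the column auto-covariances of the difference factor vanish off the diagonal/conjugate pairs and are bounded by $\big(\tfrac{2\pi m}{n}\sum_k k\|\Psi_k\|\big)^2$ via (\ref{Eq_G_Continuity}), and then Theorem \ref{Thm_TraceMomentBound} with Gershgorin's bound for $\|B\|$ followed by Markov with a fixed large power $L(\delta,D)$. That part, including the reduction of $\|\Xi\|$ to real i.i.d.\ Gaussian matrices composed with contractions (which neatly sidesteps the conjugate-column issue when $r$ is near $0$ or $n/2$), is sound and matches how the paper treats $\bm Y'_{\Re/\Im,-}$.

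The gap is in the sum factor. You bound $\|\Xi_1\|\le 2g_0\|\Xi\|+\|\Xi_0\|$ (with $g_0=\sum_k\|\Psi_k\|$, $h_0=\sum_k k\|\Psi_k\|$) and then absorb the second term, claiming $\|\Xi_1\|\lesssim g_0\sqrt m$ ``once $\|\Xi_0\|=o(\sqrt m)$''. But your bound on $\|\Xi_0\|$ is of size $h_0\tfrac{m}{n}n^{(\alpha+\delta)/2}$, while the absorption needs $\|\Xi_0\|\lesssim g_0\sqrt m$ with a constant depending only on $\cK_1,\alpha,\delta,D$; nothing in the hypotheses controls the ratio $h_0/g_0$ (the lemma assumes only Assumption \ref{Assumption0_LinearProcess} and (A1), and even under (A2) a single nonzero coefficient $\Psi_{k_0}$ at a lag $k_0=k_0(n)\gg n^{1-\alpha}$ gives $h_0/g_0=k_0$ arbitrarily large). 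Carrying the dropped term honestly, your estimate contains $\tfrac{\|\Xi_0\|^2}{2m+1}\asymp h_0^2\,n^{2\alpha-2+\delta}$, which exceeds the stated threshold $16\,g_0h_0\,n^{\alpha-1+\delta}$ whenever $g_0\ll h_0 n^{\alpha-1}$; in that regime your derivation is off by an unbounded factor, so it does not yield the lemma with a $C''$ independent of the $\Psi_k$, which is part of the claim. The repair is short and stays inside your framework: either bound the sum factor by the same covariance/trace-moment argument applied to $\Re/\Im(\tilde Z+\tilde Z')$, using $\|G(\tfrac{2\pi(s-1)}{n})+G(\tfrac{2\pi r}{n})\|\le 2g_0$ so that $\kappa_+\asymp g_0^2/m$ and the ``plus'' bound carries only $g_0$ (this is exactly what the paper does via (\ref{Eq_ApproxCont4})), or replace your $\kappa$ for $\Xi_0$ by $4\min\big(2g_0,\tfrac{2\pi m}{n}h_0\big)^2$, after which the square term is also $\lesssim g_0h_0\,n^{\alpha-1+\delta}$. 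With either fix your argument coincides with the paper's proof up to bookkeeping.
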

\
\\
It is from the definition of the bounded Lipschitz metric easy to see that
\begin{align*}
	& d_{BL}(\hat{\mu}_{A},\hat{\mu}_{A'}) \leq ||A-A'||
\end{align*}
holds for any Hermitian matrices $A,A'$. Additionally, (A2) of Assumption \ref{Assumption_Main} allows for $\varepsilon_{\Psi} \leq 16 \cK_2^2$. We combine the results (\ref{Eq_ApproxNei_Result2}) and (\ref{Eq_ApproxContinuity_result}) into
\begin{align*}
	& \bP\Big( \sup\limits_{r \in \{0,...,n-1\}} d_{\operatorname{BL}}\big( \hat{\mu}_{S(\frac{2\pi r}{n})}, \hat{\mu}_{\tilde{S}'(\frac{2\pi r}{n})} \big) \geq \big( 48\cK_2 + 20\cK_2^2 \big) n^{\max(\frac{\delta+(1-\gamma)\alpha}{1+\gamma}, \delta + \alpha-1)} \Big)\\
	& \leq \bP\Big( \sup\limits_{r \in \{0,...,n-1\}} d_{\operatorname{BL}}\big( \hat{\mu}_{S(\frac{2\pi r}{n})}, \hat{\mu}_{\tilde{S}'(\frac{2\pi r}{n})} \big) \geq \big( 48\cK_2 + 4\cK_2^2 \big) n^{\frac{\delta+(1-\gamma)\alpha}{1+\gamma}} + 16\cK_2^2 n^{\delta + \alpha-1} \Big)\\
	& \leq \sum\limits_{r=0}^{n-1} \bP\Big( d_{\operatorname{BL}}\big( \hat{\mu}_{S(\frac{2\pi r}{n})}, \hat{\mu}_{\tilde{S}(\frac{2\pi r}{n})} \big) \geq \big( 48\cK_2 + 4\cK_2^2 \big) n^{\frac{\delta+(1-\gamma)\alpha}{1+\gamma}} \Big)\\
	& \hspace{1cm} + \sum\limits_{r=0}^{n-1} \bP\Big( d_{\operatorname{BL}}\big( \hat{\mu}_{\tilde{S}(\frac{2\pi r}{n})}, \hat{\mu}_{\tilde{S}'(\frac{2\pi r}{n})} \big) \geq 16\cK_2^2 n^{\delta + \alpha-1} \Big)\\
	& \leq n C' n^{-D} + n C'' n^{-D} = (C'+C'') n^{-(D-1)} \ .
\end{align*}
By choosing $C(\delta,D) = C'(\delta,D+1)+C''(\delta,D+1)$ we have proved Proposition \ref{Prop_ApproxSimpler}. \qed

\section{Proof of Theorem \ref{Thm_MarchenkoPastur_Daniell} by universality}
This proof goes along similar lines as section 9 in \cite{Aue} or section A.3 in \cite{StrongMP}.
\\
\\
Let $(\xi_t)_{t \in \Z}$ be $d$-dimensional i.i.d. standard normal innovations and let the innovations $(\eta_t)_{t \in \N}$ be as described in Assumptions \ref{Assumption0_LinearProcess} and \ref{Assumption_Main}. For any fixed small $\delta > 0$ let
\begin{align}\label{Eq_Univ_DefK}
	& K = \Big\lfloor n^{\min(\frac{1-\alpha}{2}, \alpha-\frac{1}{2})-\delta} \Big\rfloor
\end{align}
and define the approximating processes
\begin{align}\label{Eq_FiniteApprox}
	& \mathcal{X}^{(K)}_t \coloneq \sum\limits_{k=0}^K \Psi_{k} \xi_{t-k} \ \ \text{ and } \ \ X_t^{(K)} \coloneq \sum\limits_{k=0}^K \Psi_{k} \eta_{t-k}
\end{align}
as well as their corresponding Daniell smoothed periodograms $\mathcal{S}^{(K)}$ and $S^{(K)}$ analogous to (\ref{Eq_Def_Daniell}) and (\ref{Eq_DaniellAsProduct}), i.e.
\begin{align}\label{Eq_Def_cSK}
	& \mathcal{S}^{(K)}\Big( \frac{2\pi r}{n} \Big) = \frac{1}{2m+1} \bm{\mathcal{X}}^{(K)} VD_rV^* (\bm{\mathcal{X}}^{(K)})^\top
\end{align}
and
\begin{align}\label{Eq_Def_SK}
	& S^{(K)}\Big( \frac{2\pi r}{n} \Big) = \frac{1}{2m+1} \bm{X}^{(K)} VD_rV^* (\bm{X}^{(K)})^\top
\end{align}
for $\bm{\mathcal{X}}^{(K)} = [\mathcal{X}^{(K)}_0,...,\mathcal{X}^{(K)}_{n-1}]$ and $\bm{X}^{(K)} = [X^{(K)}_0,...,X^{(K)}_{n-1}]$.
\\
\\
The first step will be to show the convergence
\begin{align}\label{Eq_Univ_Step1}
	& \E\big[ \cs_{\hat{\mu}_{\mathcal{S}^{(K)}(\theta)}}(z) - \cs_{\hat{\mu}_{S^{(K)}(\theta}}(z) \big] \xrightarrow{n \rightarrow \infty} 0
\end{align}
for all $r \in \{0,...,n-1\}$ and $z \in \C^+$ with $\Im(z) \in (0,1)$. The second step will be to show that the Stieltjes transforms will both be close to their respective means in the sense
\begin{align}\label{Eq_Univ_Step2}
	& 1 = \bP\Big( \forall\theta\in [0,2\pi) \, \forall z \in \C^+ : \ \cs_{\hat{\mu}_{\mathcal{S}^{(K)}(\theta)}}(z) - \E[\cs_{\hat{\mu}_{\mathcal{S}^{(K)}(\theta)}}(z)] \xrightarrow{n \rightarrow \infty} 0 \Big) \nonumber\\
	& 1 = \bP\Big( \forall \theta \in [0,2\pi) \, \forall z \in \C^+ : \ \cs_{\hat{\mu}_{S^{(K)}(\theta)}}(z) - \E\big[ \cs_{\hat{\mu}_{S^{(K)}(\theta)}}(z) \big] \xrightarrow{n \rightarrow \infty} 0 \Big) \ .
\end{align}
Finally, the third step will show the approximations
\begin{align}\label{Eq_Univ_Step3}
	& 1 = \bP\Big( \forall \theta \in [0,2\pi) \, \forall z \in \C^+ : \ \cs_{\hat{\mu}_{\mathcal{S}^{(K)}(\theta)}}(z) - \cs_{\hat{\mu}_{\mathcal{S}(\theta)}}(z) \xrightarrow{n \rightarrow \infty} 0 \Big) \nonumber\\
	& 1 = \bP\Big( \forall \theta \in [0,2\pi) \, \forall z \in \C^+ : \ \cs_{\hat{\mu}_{S^{(K)}(\theta)}}(z) - \cs_{\hat{\mu}_{S(\theta)}}(z) \xrightarrow{n \rightarrow \infty} 0 \Big) \ .
\end{align}
It suffices to show (\ref{Eq_Univ_Step2}) and (\ref{Eq_Univ_Step3}) for all $z \in \C^+$ with $\Im(z) \in (0,1)$, which can be seen by expressing the Stieltjes transforms for $\Im(z) > 1$ as complex curve integrals.

\subsection{Step 1: Proving convergence of means (\ref{Eq_Univ_Step1})}

The first tool for this step is a minor adaptation of the Lindeberg principle from \cite{ChatterjeeLindeberg}. Recall the definition
\begin{align*}
	& \mathrm{u}_i^{(d)} \coloneq (\underbrace{0,...,0}_{\times(i-1)},1,0,...,0)^\top \in \R^d \ .
\end{align*}

\begin{lemma}[Lindeberg principle]\label{Lemma_Lindeberg}\
	\\
	Let $f: \R^N \rightarrow \C$ be a smooth, bounded map and let $X_1,...,X_N,Y_1,...,Y_N$ be independent random variables with the same mean and variance as well as uniformly bounded sixth moments, i.e. $\E[X_i^6] \leq C_6 \geq \E[Y_i^6]$ for some constant $C_6>0$. The bound
	\begin{align*}
		& \big| \E\big[ f(X_1,...,X_N) - f(Y_1,...,Y_N) \big] \big| \leq \frac{\sqrt{C_6}}{2} \sum\limits_{i=1}^N M^X_i + M^Y_i
	\end{align*}
	holds for
	\begin{align*}
		& M^X_i = \max\limits_{\tau \in [0,1]} \E\big[ |\partial_i^3 f(\bm{Z}^0_i + \tau X_i \mathrm{u}_i^{(N)})|^2 \big]^\frac{1}{2} \ \ \text{ and } \ \ M^Y_i = \max\limits_{\tau \in [0,1]} \E\big[ |\partial_i^3 f(\bm{Z}^0_i + \tau Y_i \mathrm{u}_i^{(N)})|^2 \big]^\frac{1}{2} \ .
	\end{align*}
\end{lemma}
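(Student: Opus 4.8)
The plan is to prove this by the standard Lindeberg swapping argument: the $X_i$ are replaced by the $Y_i$ one coordinate at a time, and the error of each swap is controlled by a third-order Taylor expansion. If the right-hand side is infinite the bound is vacuous, so I may assume each $M_i^X,M_i^Y$ is finite (this is also where any implicit integrability of the derivatives of $f$ is used). First I would set up the hybrid vectors $\bm{Z}^{j} \coloneq (Y_1,\dots,Y_j,X_{j+1},\dots,X_N)$ for $j=0,\dots,N$, so that $\bm{Z}^{0}=(X_1,\dots,X_N)$ and $\bm{Z}^{N}=(Y_1,\dots,Y_N)$, and for each $i$ let $\bm{Z}^0_i$ denote the vector obtained from $\bm{Z}^{i-1}$ (equivalently from $\bm{Z}^{i}$) by setting its $i$-th coordinate to zero, so that $\bm{Z}^{i-1} = \bm{Z}^0_i + X_i\,\mathrm{u}_i^{(N)}$ and $\bm{Z}^{i} = \bm{Z}^0_i + Y_i\,\mathrm{u}_i^{(N)}$. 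The point is that $\bm{Z}^0_i$ depends only on $Y_1,\dots,Y_{i-1},X_{i+1},\dots,X_N$, hence is independent of the pair $(X_i,Y_i)$. A telescoping sum then gives
\begin{align*}
	& \E\big[ f(X_1,\dots,X_N) - f(Y_1,\dots,Y_N) \big] = \sum_{i=1}^N \Big( \E\big[ f(\bm{Z}^0_i + X_i\,\mathrm{u}_i^{(N)}) \big] - \E\big[ f(\bm{Z}^0_i + Y_i\,\mathrm{u}_i^{(N)}) \big] \Big) \ .
\end{align*}

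Next, for fixed $i$ I would Taylor expand $f$ in its $i$-th variable around $\bm{Z}^0_i$ with integral remainder,
\begin{align*}
	& f(\bm{Z}^0_i + h\,\mathrm{u}_i^{(N)}) = f(\bm{Z}^0_i) + h\,\partial_i f(\bm{Z}^0_i) + \tfrac{h^2}{2}\,\partial_i^2 f(\bm{Z}^0_i) + \tfrac{h^3}{2}\int_0^1 (1-\tau)^2\,\partial_i^3 f(\bm{Z}^0_i + \tau h\,\mathrm{u}_i^{(N)})\,d\tau \ ,
\end{align*}
evaluate at $h=X_i$ and at $h=Y_i$, and take expectations. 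Using independence of $\bm{Z}^0_i$ from $X_i$ and from $Y_i$, together with $\E[X_i]=\E[Y_i]$ and $\E[X_i^2]=\E[Y_i^2]$ (equal means and variances), the zeroth-, first- and second-order terms coincide for the two expansions and cancel, leaving
\begin{align*}
	& \E\big[ f(\bm{Z}^0_i + X_i\,\mathrm{u}_i^{(N)}) \big] - \E\big[ f(\bm{Z}^0_i + Y_i\,\mathrm{u}_i^{(N)}) \big] = \E[R_i^X] - \E[R_i^Y]
\end{align*}
with $R_i^X = \tfrac{X_i^3}{2}\int_0^1(1-\tau)^2\,\partial_i^3 f(\bm{Z}^0_i + \tau X_i\,\mathrm{u}_i^{(N)})\,d\tau$ and $R_i^Y$ defined analogously.

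Finally I would bound $|\E[R_i^X]|$: by Fubini, the elementary estimate $(1-\tau)^2\le 1$, and the Cauchy--Schwarz inequality applied in the probability space for each fixed $\tau$ — pairing $|X_i|^3$, whose square has expectation $\E[X_i^6]\le C_6$, against $\partial_i^3 f(\bm{Z}^0_i + \tau X_i\,\mathrm{u}_i^{(N)})$ — one obtains $|\E[R_i^X]| \le \tfrac{\sqrt{C_6}}{2}\,M_i^X$, and symmetrically $|\E[R_i^Y]| \le \tfrac{\sqrt{C_6}}{2}\,M_i^Y$. Summing over $i$ and using the triangle inequality yields the asserted bound; in fact keeping the harmless factor $\int_0^1(1-\tau)^2\,d\tau=\tfrac13$ would give the sharper constant $\tfrac16$, so the statement has some slack.

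I do not expect a genuine obstacle here: the lemma is a mild restatement of Chatterjee's Lindeberg principle \cite{ChatterjeeLindeberg}, and the only points that need to be handled with some care are the independence of $\bm{Z}^0_i$ from $(X_i,Y_i)$ — which is exactly what legitimizes both the cancellation of the low-order Taylor terms and the Cauchy--Schwarz step — and choosing the Cauchy--Schwarz split so that the third-order remainder is controlled precisely by the quantities $M_i^X,M_i^Y$ appearing in the statement rather than by a supremum norm of $\partial_i^3 f$.
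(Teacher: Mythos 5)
Your proposal is correct and follows essentially the same route as the paper's proof: a telescoping sum over coordinate swaps, a third-order Taylor expansion with integral remainder around the vector with the $i$-th coordinate zeroed out, cancellation of the low-order terms via independence and matched first and second moments, and Cauchy--Schwarz pairing $|X_i|^3$ (or $|Y_i|^3$) against the third partial derivative. The only differences are immaterial: you swap in the $Y_i$ from the left rather than keeping the $X_i$ on the left (which just flips the explicit form of $\bm{Z}^0_i$, left unspecified in the statement), and you note the slack factor $\int_0^1(1-\tau)^2\,d\tau=\tfrac13$ that the paper also discards.
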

\
\\
In order to apply this lemma to $\eta \mapsto \cs_{\hat{\mu}_{S^{(K)}(\theta)}}(z)$, we must first calculate some derivatives.
The derivative of $X^{(K)}$ with regards to a component $(\eta_{t_0})_i$ is
\begin{align*}
	& \frac{\partial}{\partial (\eta_{t_0})_i} X^{(K)}_t = \sum\limits_{k=0}^K \Psi_{k} \frac{\partial }{\partial (\eta_{t_0})_i} \eta_{t-k} = \mathbbm{1}_{t-t_0 \in \{0,...,K\}} \, \Psi_{t-t_0} \mathrm{u}_i^{(d)} \ ,
\end{align*}
which for the matrix
\begin{align}\label{Eq_Def_B}
	& \bm{B} \coloneq \frac{1}{\sqrt{2m+1}} \bm{X}^{(K)} V D_r^{\frac{1}{2}}
\end{align}
immediately gives
\begin{align*}
	& \frac{\partial^2}{\partial (\eta_{t_1})_{i_1} \, \partial (\eta_{t_2})_{i_2}} \bm{B} = 0 \ \ \text{ and } \ \ \mathrm{rank}\Big(\frac{\partial}{\partial (\eta_{t_0})_i}\bm{B}\Big) \leq K \ .
\end{align*}
We further observe
\begin{align*}
	& \frac{\partial}{\partial (\eta_{t_0})_i}\bm{B}_{j,s} = \frac{1}{\sqrt{2m+1}} \Big(\frac{\partial}{\partial (\eta_{t_0})_i} \bm{X}^{(K)} V D_r^{\frac{1}{2}}\Big)_{j,s}\\
	& = \frac{\mathbbm{1}_{\rho_n(r,s-1)\leq m}}{\sqrt{2m+1} \sqrt{n}} \sum\limits_{l=1}^n \frac{\partial}{\partial (\eta_{t_0})_i} \bm{X}^{(K)}_{j,l} e^{-2\pi i \frac{(l-1)(s-1)}{n}}\\
	& = \frac{\mathbbm{1}_{\rho_n(r,s-1)\leq m}}{\sqrt{2m+1} \sqrt{n}} \sum\limits_{t=0}^{n-1} e^{-2\pi i \frac{t(s-1)}{n}} \mathbbm{1}_{t-t_0 \in \{0,...,K\}} \, (\mathrm{u}_j^{(d)})^\top \Psi_{t-t_0} \mathrm{u}_i^{(d)} \ ,
\end{align*}
which gives
\begin{align*}
	& \Big|\Big| \frac{\partial}{\partial (\eta_{t_0})_i}\bm{B} \Big|\Big|^2 \leq \Big|\Big| \frac{\partial}{\partial (\eta_{t_0})_i}\bm{B} \Big|\Big|_F^2\\
	& = \frac{1}{(2m+1)n} \sum\limits_{\substack{s=1 \\ \rho_n(r,s-1) \leq m}}^n \Big|\Big| \sum\limits_{t=0}^{n-1} e^{-2\pi i \frac{t(s-1)}{n}} \mathbbm{1}_{t-t_0 \in \{0,...,K\}} \, \Psi_{t-t_0} \mathrm{u}_i^{(d)} \Big|\Big|_2^2\\
	& \leq \frac{1}{(2m+1)n} \sum\limits_{\substack{s=1 \\ \rho_n(r,s-1) \leq m}}^n \bigg(\sum\limits_{t=0}^{n-1} \mathbbm{1}_{t-t_0 \in \{0,...,K\}} \, ||\Psi_{t-t_0}||\bigg)^2\\
	& \leq \frac{1}{n} \bigg(\sum\limits_{k=0}^K ||\Psi_k||\bigg)^2 \overset{\text{(\ref{Eq_Assumption_LongRandeDependence_cK2})}}{\leq} \frac{\cK_2^2}{n} \ .
\end{align*}
We can thus apply the following lemma to $\eta \mapsto \bm{B}$ with $\kappa = \frac{\cK_2^2}{n}$.

\begin{lemma}[Bounding derivatives of Stieltjes transforms]\label{Lemma_BoundingStieltjesDerivatives}\
	\\
	Let $B : \R^N \rightarrow \C^{d \times m}$ be a smooth map with the properties
	\begin{align*}
		& \Big|\Big|\frac{\partial}{\partial {x_r}} B(x)\Big|\Big| \leq \kappa \ \ ; \ \ \frac{\partial^2}{\partial {x_{r_1}} \, \partial {x_{r_2}}} B(x) = 0 \ \ ; \ \ \mathrm{rank}\Big( \frac{\partial}{\partial {x_r}} B(x) \Big) \leq K \ ,
	\end{align*}
	then for the function
	\begin{align*}
		& f_{z}(x) \mapsto \cs_{\hat{\mu}_{B(x)B^*(x)}}(z)
	\end{align*}
	and $z \in \C^+$ the bound
	\begin{align*}
		& \Big| \frac{\partial^3}{\partial x_{r_1} \, \partial x_{r_2} \, \partial x_{r_3}} f_z(x) \Big| \leq \frac{48 K ||B(x)||^3 \kappa^3}{d \Im(z)^4} + \frac{12 K ||B(x)|| \kappa^3}{d \Im(z)^3}
	\end{align*}
	holds.
\end{lemma}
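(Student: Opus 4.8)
The plan is to work with the resolvent $R(x)\coloneq (B(x)B(x)^*-zI_d)^{-1}$, so that $f_z(x)=\frac1d\tr R(x)$, and to bound $\partial^3 f_z$ by differentiating the resolvent three times and estimating the resulting traces word by word. Writing $M(x)\coloneq B(x)B(x)^*$ and $B_i\coloneq\partial_{x_{r_i}}B(x)$, the first point is that $M(x)$ is Hermitian and positive semi-definite, so its eigenvalues are real and $||R(x)||=\max_j|\lambda_j(M(x))-z|^{-1}\le\Im(z)^{-1}$; this is the only spectral input needed.

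Next I would record the differentiation rules. From $\partial_i R=-R(\partial_iM)R$ and $\partial_i M=B_iB^*+BB_i^*$, together with the hypothesis $\partial^2_{x_{r_i}x_{r_j}}B=0$, one obtains $\partial_i\partial_jM=B_iB_j^*+B_jB_i^*$ and $\partial_i\partial_j\partial_kM=0$. What matters is the structure: a first derivative $\partial_iM$ splits as a sum of two matrices each of rank $\le K$ and operator norm $\le\kappa\,||B(x)||$, and a second derivative $\partial_i\partial_jM$ splits as a sum of two matrices each of rank $\le K$ and operator norm $\le\kappa^2$. Alongside these I would use only the elementary trace inequality $|\tr(PQ)|\le\mathrm{rank}(P)\,||P||\,||Q||$, a mild strengthening of the bound (\ref{Eq_TraceBound_Neumann}) already used above.

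The computational core is the Leibniz expansion of $\partial_{x_{r_1}}\partial_{x_{r_2}}\partial_{x_{r_3}}\tr R(x)$. Since $\partial^3M=0$, every surviving summand is the trace of a cyclic word in $R$ and the derivatives of $M$ of exactly one of two shapes: three factors $R$ together with one $\partial_i\partial_jM$ and one $\partial_kM$ (``type A''), or four factors $R$ together with three $\partial_iM$'s (``type B''). A direct count of the expansion gives six type-A and six type-B words. Each word is then estimated by expanding its $\partial M$- and $\partial^2M$-factors into the rank-$\le K$ blocks above, pulling a single block $P$ out of the trace via $|\tr(P\cdot)|\le K\,||P||\,||\cdot||$, and bounding all remaining factors in operator norm by $||R||\le\Im(z)^{-1}$, $||B_i||\le\kappa$ and $||B(x)||$. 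A type-B word splits into $2^3$ sub-words, each at most $K\kappa^3||B(x)||^3\Im(z)^{-4}$, and a type-A word is at most a fixed constant times $K\kappa^3||B(x)||\,\Im(z)^{-3}$; summing the twelve contributions and dividing by $d$ produces the two displayed summands.

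I expect the main obstacle to be precisely this bookkeeping: keeping the Leibniz expansion organized into the two shapes, and exercising the discipline of pulling out exactly one low-rank factor per word. It is the affine dependence of $B$ on $x$ (so that $\partial^3M=0$ and no $\partial^2B$ cross-terms arise) that keeps the expansion finite and of the two stated forms, and it is the rank hypothesis, applied once per word, that makes the final bound linear --- rather than cubic --- in $K$; tracking the powers of $||B(x)||$ and $\Im(z)^{-1}$ separately for the two shapes is what yields the split into the two summands.
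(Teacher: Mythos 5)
Your proposal follows essentially the same route as the paper: differentiate the resolvent of $B(x)B^*(x)$ three times (the paper packages this as Lemma \ref{Lemma_ResolventDerivatives}), use $\partial^2 B=0$ so that only the twelve words with either three first-derivative factors or one second- and one first-derivative factor survive, and bound each word by pulling out one low-rank derivative block via $|\tr(A)|\le \mathrm{rank}(A)\,||A||$ while estimating the remaining factors in operator norm by $||(B B^*-z)^{-1}||\le \Im(z)^{-1}$, $||\partial_r B||\le\kappa$ and $||B(x)||$. The only divergence is constant bookkeeping in the six ``type-A'' words (your one-block-per-word count yields $24$ rather than $12$ in the second summand, and the paper's own count there is comparably rough), which is immaterial to how the lemma is applied.
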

\
\\
Since $S^{(K)}(\frac{2\pi r}{n}) = \bm{B} \bm{B}^*$, we for $f_z(\eta) \coloneq \cs_{\hat{\mu}_{S^{(K)}(\frac{2\pi r}{n})}}(z)$ get
\begin{align}\label{Eq_Univ_fzThirdDerivativeBound}
	& \Big| \frac{\partial^3}{(\partial (\eta_{t_0})_{i_0})^3} \cs_{\hat{\mu}_{S^{(K)}(\frac{2\pi r}{n})}}(z) \Big| \leq \frac{48 \cK_2^3 K ||\bm{B}||^3}{d \Im(z)^4 n^{\frac{3}{2}}} + \frac{12 \cK_2^3 K ||\bm{B}||}{d \Im(z)^3 n^{\frac{3}{2}}}
\end{align}
for all $z \in \C^+$, $t_0 \in \{-K,...,n-1\}$ and $i_0 \leq d$. By employing more specific and less general forms of arguments from the proof of Theorem \ref{Thm_TraceMomentBound}, we can bound $\E\big[ ||\bm{B}||^{2} \big]$ and $\E\big[ ||\bm{B}||^{6} \big]$ as follows.

\begin{lemma}[Crude trace moment bound]\label{Lemma_CrudeTraceMomentBound}\
	\\
	Under the assumptions of Theorem \ref{Thm_MarchenkoPastur_Daniell}, it for $\bm{B} \coloneq\frac{1}{\sqrt{2m+1}} \bm{X}^{(K)} V D_r^{\frac{1}{2}}$ with $\bm{X}^{(K)}$ as in (\ref{Eq_Def_SK}) holds that
	\begin{align}\label{Eq_CrudeTB_Result1}
		& \E\big[ ||\bm{B}||^{2} \big] \leq \cK_2^2 d
	\end{align}
	and
	\begin{align}\label{Eq_CrudeTB_Result2}
		& \E\big[ ||\bm{B}||^{6} \big] \leq 15 \sup\limits_{t \in \Z} \E\big[ \max_{j\leq d} |(\eta_{t})_j|^6 \big] \cK_2^6 \Big( \frac{max(d, 2m+1)^4}{(2m+1)^3} + \frac{15 d^2}{(2m+1)n} \Big) \ .
	\end{align}
\end{lemma}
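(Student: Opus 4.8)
The plan is to deduce both inequalities from the elementary bound $\|\bm{B}\|^{2L}=\|\bm{B}\bm{B}^*\|^{L}\le\tr\big((\bm{B}\bm{B}^*)^{L}\big)$, valid for any positive semi-definite matrix, applied with $L=1$ and $L=3$. I abbreviate $M\coloneq VD_rV^*$, which is the Hermitian orthogonal projection of rank $\min(2m+1,n)\le 2m+1$ onto $\operatorname{span}\{v_s:\rho_n(r,s-1)\le m\}$, where $v_s$ denotes the $s$-th column of $V$; note that $\bm{B}\bm{B}^*=\frac{1}{2m+1}\bm{X}^{(K)}M(\bm{X}^{(K)})^{\top}$ and $(\bm{\eta}'VD_r^{1/2})(\bm{\eta}'VD_r^{1/2})^*=\bm{\eta}'M(\bm{\eta}')^*$ for any matrix $\bm{\eta}'$.

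For (\ref{Eq_CrudeTB_Result1}) I write $\E\big[\tr(\bm{B}\bm{B}^*)\big]=\frac{1}{2m+1}\sum_{s\,:\,\rho_n(r,s-1)\le m}v_s^*\,\E\big[(\bm{X}^{(K)})^{\top}\bm{X}^{(K)}\big]\,v_s$ and compute $\E\big[(X^{(K)}_{l-1})^{\top}X^{(K)}_{l'-1}\big]=c_{l'-l}$ with $c_u\coloneq\sum_{a,\,a+u\in\{0,\dots,K\}}\tr(\Psi_a^{\top}\Psi_{a+u})$. Hence $\E\big[(\bm{X}^{(K)})^{\top}\bm{X}^{(K)}\big]$ is a real symmetric Toeplitz matrix, whose operator norm is at most the supremum of its symbol, so at most $\sum_u|c_u|$; using $|\tr(\Psi_a^{\top}\Psi_{a+u})|\le\|\Psi_a\|_F\|\Psi_{a+u}\|_F\le d\|\Psi_a\|\,\|\Psi_{a+u}\|$ and (\ref{Eq_Assumption_LongRandeDependence_cK2}) gives $\sum_u|c_u|\le d\big(\sum_k\|\Psi_k\|\big)^2\le d\cK_2^2$. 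Since $\|v_s\|=1$, each of the at most $2m+1$ summands is at most $d\cK_2^2$, and dividing by $2m+1$ proves (\ref{Eq_CrudeTB_Result1}).

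For (\ref{Eq_CrudeTB_Result2}) I first strip the filter off the noise. Writing $\bm{\eta}^{(k)}\coloneq[\eta_{-k},\dots,\eta_{n-1-k}]$, a $(d\times n)$ matrix with independent, mean-zero, unit-variance entries, one has $\bm{B}=\frac{1}{\sqrt{2m+1}}\sum_{k=0}^{K}\Psi_k\,\bm{\eta}^{(k)}VD_r^{1/2}$, so submultiplicativity of the operator norm, Jensen's inequality applied to $x\mapsto x^6$ with weights proportional to $\|\Psi_k\|$, and $\|\bm{\eta}^{(k)}VD_r^{1/2}\|^{6}=\|\bm{\eta}^{(k)}M(\bm{\eta}^{(k)})^*\|^{3}\le\tr\big((\bm{\eta}^{(k)}M(\bm{\eta}^{(k)})^*)^{3}\big)$ together yield $\E[\|\bm{B}\|^6]\le\frac{\cK_2^6}{(2m+1)^3}\sup_{0\le k\le K}\E[\tr((\bm{\eta}^{(k)}M(\bm{\eta}^{(k)})^*)^{3})]$. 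It thus suffices to bound $\E[\tr((\bm{\eta}M\bm{\eta}^*)^{3})]$, for a $(d\times n)$ matrix $\bm{\eta}$ with i.i.d. entries, by $15\sup_{t}\E[\max_{j}|(\eta_t)_j|^6]\big(\max(d,2m+1)^4+15\,d^2(2m+1)^2/n\big)$. To prove this I expand $\tr\big((\bm{\eta}M\bm{\eta}^*)^{3}\big)=\sum_{j_1,j_2,j_3\le d}\sum_{s_1,s_2,s_3}W_{j_1,s_1}\overline{W_{j_2,s_1}}W_{j_2,s_2}\overline{W_{j_3,s_2}}W_{j_3,s_3}\overline{W_{j_1,s_3}}$ with $W\coloneq\bm{\eta}V$ and the $s_i$-sums running over the $\le 2m+1$ admissible frequencies, and, since entries of $W$ in different rows are independent, apply inside each row the moment--cumulant expansion $\E[Z_1\cdots Z_p]=\sum_{\bm{\pi}\in\Pi(p)}\prod_{A\in\bm{\pi}}\operatorname{cum}\big((Z_q)_{q\in A}\big)$, in which every partition with a singleton block drops out. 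Organising the surviving sums by row- and column-cycles exactly as in the proof of Theorem \ref{Thm_TraceMomentBound} with $L=3$—so that by (\ref{Eq_CycleBound}) there are at most four cycles—the pure-pairing part coincides with the Gaussian expression, each of the at most $\#\Pi_2(6)=15$ pairings contributing at most $\sup_t\E[\max_j|(\eta_t)_j|^6]\,\max(d,2m+1)^4$. Every partition with a block $A$ of size $\ge 3$ produces a cumulant $\operatorname{cum}(W_{j,s_{a_1}},\dots,W_{j,s_{a_{|A|}}})$, and since $W_{j,s}=\frac{1}{\sqrt{n}}\sum_l\eta_{j,l}e^{\mp 2\pi i(l-1)(s-1)/n}$, such a cumulant is a bounded multiple of $n^{1-|A|/2}$ times a Kronecker delta in the indices $s_{a_1},\dots,s_{a_{|A|}}$; summing over the remaining indices, the total contribution of all these partitions is at most $15\sup_t\E[\max_j|(\eta_t)_j|^6]\,d^2(2m+1)^2/n$. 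Combining the two contributions and using $\sum_k\|\Psi_k\|\le\cK_2$ gives (\ref{Eq_CrudeTB_Result2}).

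The Toeplitz-norm estimate and the reduction to i.i.d. noise are routine; the genuine difficulty is the last step—carrying the Fourier phases of $V$ through the row/column-cycle machinery of Theorem \ref{Thm_TraceMomentBound} and verifying that every deviation from a Wick pairing, equivalently every block of size at least three in the moment--cumulant expansion, costs a full power of $n$, which is exactly what demotes the prospective $d^2/((2m+1)n)$ term below the leading $\max(d,2m+1)^4/(2m+1)^3$ one.
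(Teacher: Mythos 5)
Your proof of (\ref{Eq_CrudeTB_Result1}) is correct and in fact cleaner than the paper's direct Fourier computation: the Toeplitz/Schur-type bound $\|\E[(\bm{X}^{(K)})^\top\bm{X}^{(K)}]\|\le\sum_u|c_u|\le d\cK_2^2$ together with $VD_rV^*=\sum_{s}v_sv_s^*$ gives the same constant. Your organization of (\ref{Eq_CrudeTB_Result2}) is also genuinely different from the paper's: you strip the filter by triangle inequality and Jensen, reducing to $\E\big[\tr\big((\bm{\eta}M\bm{\eta}^*)^3\big)\big]$ for a matrix with independent standardized entries, and your pairing part (which only uses that the second moments of $W=\bm{\eta}V$ are $0$/$1$ indicators, plus the cycle count (\ref{Eq_CycleBound})) correctly reproduces the leading term $15\,\max(d,2m+1)^4$; the paper instead carries the $\Psi_k$ through the whole sixth-moment expansion and handles deviations from pairings by an overcounting correction rather than by a cumulant expansion.

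The gap is in the step you yourself identify as the crux: the partitions with a block of size at least three are only asserted to be small, and both quantitative claims you make about them fail. First, the claim that $\operatorname{cum}\big(W_{j,s_{a_1}},\dots,W_{j,s_{a_p}}\big)$ is a bounded multiple of $n^{1-p/2}$ \emph{times a Kronecker delta} in the frequencies requires $\operatorname{cum}_p\big((\eta_t)_j\big)$ to be constant in $t$; Assumption \ref{Assumption0_LinearProcess} and (A4) give independence, unit variance and uniformly bounded moments, not identical distribution over time, so in general you only get $|\operatorname{cum}|\le C_p\,n^{1-p/2}$ with no constraint on $s_{a_1},\dots,s_{a_p}$ — and even for i.i.d. innovations the delta can be vacuous because the phases cancel (for the block $\{W_{j,s_2},\ol{W}_{j,s_2},W_{j,s_3},\ol{W}_{j,s_3}\}$ the cumulant equals $\kappa_4/n$ for \emph{all} $s_2,s_3$). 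Second, the resulting count does not give $15\sup_t\E[\max_j|(\eta_t)_j|^6]\,d^2(2m+1)^2/n$: the $4{+}2$ partitions whose $4$-block meets all three row-groups (forcing only $j_1=j_2=j_3$, hence $d$ choices of rows but $(2m+1)^3$ free frequencies) and all ten $3{+}3$ partitions contribute terms of order $d(2m+1)^3/n$, which exceeds $d^2(2m+1)^2/n$ by the factor $(2m+1)/d$ whenever $2m+1>d$ (i.e. whenever $c<2$); in addition, bounding $\kappa_3^2,\kappa_4,\kappa_6$ by $\sup_t\E[\max_j|(\eta_t)_j|^6]$ costs further constants you do not track. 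Since $d\asymp m$, your route does yield the correct order — a non-pairing contribution of size $C\,\max(d,2m+1)\,d\,(2m+1)^2/n$ with $C$ depending on $\cK_1$ and the partition count, which would serve the lemma's later use just as well — but it does not establish inequality (\ref{Eq_CrudeTB_Result2}) with the stated constants; to close this you need an explicit enumeration of the surviving partitions, recording per block how many row-equalities are forced and how many frequency indices remain free, instead of the delta heuristic.
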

\
\\
So far, none of our steps have relied on the variances of $(\eta_t)_i$ being one. We can thus replace the innovations $\big( (\xi_t)_i \big)_{i \leq d, \, t \in \{-K,n-1\}}$ relevant to $\mathcal{\bm{X}}^{(K)}$ with
\begin{align}\label{Def_zeta1}
	& \big( (\zeta^{,\eta,t_0,i_0,\tau}_t)_i \big)_{i \leq d, \, t \in \{-K,n-1\}} = {\footnotesize \left( \begin{array}{ccccccc}
		(\eta_{-K})_1 & \cdots & (\eta_{t_0-1})_1 & (\eta_{t_0})_1 & (\xi_{t_0+1})_1 & \cdots & (\xi_{n-1})_1\\
		\vdots & & \vdots & \vdots & \vdots & & \vdots\\
		\vdots & & \vdots & (\eta_{t_0})_{i_0-1} & \vdots & & \vdots\\
		\vdots & & \vdots & \tau\times(\eta_{t_0})_{i_0} & \vdots & & \vdots\\
		\vdots & & \vdots & (\xi_{t_0})_{i_0+1} & \vdots & & \vdots\\
		\vdots & & \vdots & \vdots & \vdots & & \vdots\\
		(\eta_{-K})_d & \cdots & (\eta_{t_0-1})_d & (\xi_{t_0})_d & (\xi_{t_0+1})_d & \cdots & (\xi_{n-1})_d
	\end{array} \right)}
\end{align}
or
\begin{align}\label{Def_zeta2}
	& \big( (\zeta^{\xi,t_0,i_0,\tau}_t)_i \big)_{i \leq d, \, t \in \{-K,n-1\}} = {\footnotesize \left( \begin{array}{ccccccc}
		(\eta_{-K})_1 & \cdots & (\eta_{t_0-1})_1 & (\eta_{t_0})_1 & (\xi_{t_0+1})_1 & \cdots & (\xi_{n-1})_1\\
		\vdots & & \vdots & \vdots & \vdots & & \vdots\\
		\vdots & & \vdots & (\eta_{t_0})_{i_0-1} & \vdots & & \vdots\\
		\vdots & & \vdots & \tau\times(\xi_{t_0})_{i_0} & \vdots & & \vdots\\
		\vdots & & \vdots & (\xi_{t_0})_{i_0+1} & \vdots & & \vdots\\
		\vdots & & \vdots & \vdots & \vdots & & \vdots\\
		(\eta_{-K})_d & \cdots & (\eta_{t_0-1})_d & (\xi_{t_0})_d & (\xi_{t_0+1})_d & \cdots & (\xi_{n-1})_d
	\end{array} \right)}
\end{align}
for any $\tau \in [0,1]$ and get the same bounds as (\ref{Eq_Univ_fzThirdDerivativeBound}) and Lemma \ref{Lemma_CrudeTraceMomentBound} with $C_{6,\operatorname{new}} = \max(C_6, 15)$, since the sixth moment of a standard normal random variable is $15$. We can thus apply Lemma \ref{Lemma_Lindeberg} to $f_z(\eta) \coloneq \cs_{\hat{\mu}_{S^{(K)}(\frac{2\pi r}{n})}}(z)$ and get
\begin{align}\label{Eq_UnivLindebergStep}
	& \big| \E\big[ f_z(\eta) - f_z(\xi) \big] \big| \leq \frac{\sqrt{\max(C_6, 15)}}{2} \sum\limits_{t_0=-K}^{n-1} \sum\limits_{i_0=1}^d M^{\eta}_{t_0,i_0} + M^{\xi}_{t_0,i_0} \ ,
\end{align}
where
\begin{align}\label{Eq_univ_MBound1}
	& M^{\eta}_{t_0,i_0} = \max\limits_{\tau \in [0,1]} \E\big[ |\partial_i^3 f_z(\zeta^{\eta,t_0,i_0,\tau})|^2 \big]^\frac{1}{2} \nonumber\\
	& \overset{\text{(\ref{Eq_Univ_fzThirdDerivativeBound})}}{\leq} \max\limits_{\tau \in [0,1]} \E\Bigg[ \bigg| \frac{48 \cK_2^3 K ||\bm{B}^{t_0,i_0,\tau}||^3}{d \Im(z)^4 n^{\frac{3}{2}}} + \frac{12 \cK_2^3 K ||\bm{B}^{t_0,i_0,\tau}||}{d \Im(z)^3 n^{\frac{3}{2}}} \bigg|^2 \Bigg]^{\frac{1}{2}} \nonumber\\
	& \leq \max\limits_{\tau \in [0,1]} \frac{48 \cK_2^3 K \E\big[||\bm{B}^{t_0,i_0,\tau}||^6\big]^\frac{1}{2}}{d \Im(z)^4 n^{\frac{3}{2}}} + \max\limits_{\tau \in [0,1]} \frac{12 \cK_2^3 K \E\big[||\bm{B}^{t_0,i_0,\tau}||^2\big]^{\frac{1}{2}}}{d \Im(z)^3 n^{\frac{3}{2}}} \nonumber\\
	& \overset{\text{Lemma \ref{Lemma_CrudeTraceMomentBound}}}{\leq} \frac{48 \cK_2^3 K \sqrt{C\max\limits_{\tau \in [0,1]}\sup\limits_{t \in \Z}\E\big[ \max_{j\leq d} |(\zeta^{\eta,t_0,i_0,\tau}_{t})_j|^6 \big]}}{d \Im(z)^4 n^{\frac{3}{2}}} \Big( \overbrace{\frac{\max(d, 2m+1)^4}{(2m+1)^3} + \frac{15 d^2}{(2m+1)n}}^{\leq C'' d \text{, since $\frac{d}{2m+1} \rightarrow c$ and $n \gg d$}} \Big)^{\frac{1}{2}} \nonumber\\
	& \hspace{1cm} + \frac{12 \cK_2^3 K \sqrt{C} d^{\frac{1}{2}}}{d \Im(z)^3 n^{\frac{3}{2}}} \ .
\end{align}
It is clear that 
\begin{align*}
	& \E\big[ \max_{j\leq d} |(\zeta^{\eta,t_0,i_0,\tau}_{t})_j|^6 \big] \leq \E\big[ \max_{j\leq d} |(\eta_{t})_j|^6 \big] + \E\big[ \max_{j\leq d} |(\xi_{t})_j|^6 \big]
\end{align*}
and we can for any $L \in \N$ by Jensen's inequality easily bound
\begin{align}\label{Eq_Univ_MeanMax}
	& \E\big[ \max_{j\leq d} |(\eta_{t})_j|^6 \big] \leq \E\big[ \max_{j\leq d} |(\eta_{t})_j|^{6L} \big]^{\frac{1}{L}} \leq \bigg(\sum\limits_{j=1}^d \E\big[ |(\eta_{t})_j|^{6L} \big]\bigg)^{\frac{1}{L}} \overset{\text{(\ref{Eq_Assumption_InnovationMoments})}}{\leq} \big( d C_{6L} \big)^{\frac{1}{L}} \nonumber\\
	& \E\big[ \max_{j\leq d} |(\xi_{t})_j|^6 \big] \leq \E\big[ \max_{j\leq d} |(\xi_{t})_j|^{6L} \big]^{\frac{1}{L}} \leq \bigg(\sum\limits_{j=1}^d \E\big[ |(\xi_{t})_j|^{6L} \big]\bigg)^{\frac{1}{L}} \leq \big( d (6L-1)!! \big)^{\frac{1}{L}} \ ,
\end{align}
so there exists a constant $C'_L>0$ such that
\begin{align*}
	& \sup\limits_{t \in \Z}\E\big[ \max_{j\leq d} |(\zeta^{\eta,t_0,i_0,\tau}_{t})_j|^6 \big] \leq d^{\frac{1}{L}} C'_L \ .
\end{align*}
and (\ref{Eq_univ_MBound1}) for all $z \in \C^+$ with $\Im(z) \in (0,1)$ becomes
\begin{align*}
	& M^{\eta}_{t_0,i_0} \leq \frac{48 \cK_2^3 K \sqrt{CC'_LC''} d^{\frac{1}{2L}} }{d \Im(z)^4 n^{\frac{3}{2}}} d^{\frac{1}{2}} + \frac{12 \cK_2^3 K \sqrt{C} d^{\frac{1}{2}}}{d \Im(z)^3 n^{\frac{3}{2}}} \leq \frac{\cK_2^3 \sqrt{C} (48\sqrt{C'_LC''}+12)}{\Im(z)^4} \frac{K d^{\frac{1}{2L}}}{d^{\frac{1}{2}} n^{\frac{3}{2}}}
\end{align*}
and analogously $M^{\xi}_{t_0,i_0} \leq \frac{\cK_2^3 \sqrt{C} (48\sqrt{C'_LC''}+12)}{\Im(z)^4} \frac{K d^{\frac{1}{2L}}}{d^{\frac{1}{2}} n^{\frac{3}{2}}}$. Plugging these bounds back into (\ref{Eq_UnivLindebergStep}) we see
\begin{align*}
	& \big| \E\big[ \cs_{\hat{\mu}_{\mathcal{S}^{(K)}(\frac{2\pi r}{n})}}(z) - \cs_{\hat{\mu}_{S^{(K)}(\frac{2\pi r}{n})}}(z) \big] \big| \leq \frac{\sqrt{\max(C_6, 15)}}{2} nd \, 2\frac{\cK_2^3 \sqrt{C} (48\sqrt{C'_LC''}+12)}{\Im(z)^4} \frac{K d^{\frac{1}{2L}}}{d^{\frac{1}{2}} n^{\frac{3}{2}}}\\
	& = \underbrace{\sqrt{\max(C_6, 15)} \, \cK_2^3 \sqrt{C} (48\sqrt{C'_LC''}+12)}_{=: C(L)} \frac{1}{\Im(z)^4} \frac{K d^{\frac{1}{2}+\frac{1}{2L}}}{n^{\frac{1}{2}}} \overset{\text{(\ref{Eq_Assumption_Asymptotics})}}{\leq} \frac{C(L) \cK_1}{\Im(z)^4} \frac{K n^{\frac{\alpha}{2}+\frac{\alpha}{2L}}}{n^{\frac{1}{2}}}
\end{align*}
and can choose $L \in \N$ large enough that $\delta > \frac{\alpha}{2L}$, which by (\ref{Eq_Univ_DefK}) gives (\ref{Eq_Univ_Step1}).

\subsubsection{Step 2: Proving concentration of Stieltjes transforms (\ref{Eq_Univ_Step2})}
This step is similar to its counterpart in A.3 of \cite{StrongMP}. The key ingredient will be McDiarmid's inequality, for which we must bound the effect of changing a single innovation vector $\eta_{t_0}$ on the Stieltjes transform $\cs_{S(\frac{2\pi r}{n})}$.
\\
\\
Let $(\ul{\eta}_t)_{t \in \Z}$ be a copy of the innovations $(\eta_t)_{t \in \Z}$ which differs at exactly one point in time $t_0$, i.e.
\begin{align*}
	& \ul{\eta}_t = \eta_t \ , \ \forall t \in \Z \setminus \{t_0\} \ .
\end{align*}
Define
\begin{align*}
	& X^{(K)}_t \coloneq \sum\limits_{k=0}^K \Psi_{k} \Sigma^{\frac{1}{2}} \eta_{t-k} \ \ ; \ \ \ul{X}^{(K)}_t \coloneq \sum\limits_{k=0}^K \Psi_{k} \Sigma^{\frac{1}{2}} \ul{\eta}_{t-k}
\end{align*}
and the corresponding Daniell smoothed periodograms $S^{(K)},\ul{S}^{(K)}$ in analogy to (\ref{Eq_Def_SK}). 
Next, split the matrices $\bm{X}^{(K)} = \big[X^{(K)}_0,...,X^{(K)}_{n-1}\big]$ and $\ul{\bm{X}}^{(K)} = \big[\ul{X}^{(K)}_0,...,\ul{X}^{(K)}_{n-1}\big]$ into
\begin{align*}
	& \bm{X}^{(K)}_1 = \big[ \mathbbm{1}_{t-t_0 \in \{0,...,K-1\}} X^{(K)}_t \big]_{t \in \{0,...,n-1\}} \ \ ; \ \ \bm{X}^{(K)}_2 = \big[ \mathbbm{1}_{t-t_0 \notin \{0,...,K-1\}} X^{(K)}_t \big]_{t \in \{0,...,n-1\}}\\
	& \ul{\bm{X}}^{(K)}_1 = \big[ \mathbbm{1}_{t-t_0 \in \{0,...,K-1\}} \ul{X}^{(K)}_t \big]_{t \in \{0,...,n-1\}} \ \ ; \ \ \ul{\bm{X}}^{(K)}_2 = \big[ \mathbbm{1}_{t-t_0 \notin \{0,...,K-1\}} \ul{X}^{(K)}_t \big]_{t \in \{0,...,n-1\}}
\end{align*}
and observe $\bm{X}^{(K)}_2=\ul{\bm{X}}^{(K)}_2$.
Decompose $S^{(K)}(\frac{2\pi r}{n})$ and $\ul{S}^{(K)}(\frac{2\pi r}{n})$ each into
\begin{align*}
	& S^{(K)}\Big( \frac{2\pi r}{n} \Big) = \frac{1}{2m+1} \bm{X}^{(K)}_2 V D_r V^* (\bm{X}^{(K)}_2)^\top + E\\
	& \ul{S}^{(K)}\Big( \frac{2\pi r}{n} \Big) = \frac{1}{2m+1} \ul{\bm{X}}^{(K)}_2 V D_r V^* (\ul{\bm{X}}^{(K)}_2)^\top + \ul{E} \ ,
\end{align*}
where $E$ and $\ul{E}$ are random Hermitian matrices with rank no greater than $3K$. Lemma \ref{Lemma_LowRankPerturbation} can be combined with the following lemma to for all $z \in \C$ with $\Im(z) \in (0,1)$ see
\begin{align}\label{Eq_McDiarmid_Preparation1}
	& \big| \cs_{\hat{\mu}_{S^{(K)}(\frac{2\pi r}{n})}}(z) - \cs_{\hat{\mu}_{\frac{1}{2m+1} \bm{X}^{(K)}_2 V D_r V^* (\bm{X}^{(K)}_2)^\top}}(z) \big| \leq \frac{48K}{d \Im(z)^2} \nonumber\\
	& \big| \cs_{\hat{\mu}_{\ul{S}^{(K)}(\frac{2\pi r}{n})}}(z) - \cs_{\hat{\mu}_{\frac{1}{2m+1} \ul{\bm{X}}^{(K)}_2 V D_r V^* (\ul{\bm{X}}^{(K)}_2)^\top}}(z) \big| \leq \frac{48K}{d \Im(z)^2} \ .
\end{align}

\begin{lemma}[Stieltjes transforms and the bounded Lipschitz metric]\label{Lemma_StieltjesPerturbation}\
	\\
	For any probability measures $\mu_1,\mu_2$ on $\R$ and all $z \in \C$ with $\Im(z) \in (0,1)$ holds that 
	\begin{align}\label{Eq_StieltjesPerturbation}
		& \frac{\Im(z)^2}{2} |\cs_{\mu_1}(z) - \cs_{\mu_2}(z)| \leq \, d_{\operatorname{BL}}(\mu_1,\mu_2) \ .
	\end{align}
\end{lemma}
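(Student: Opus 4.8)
The plan is to write the difference of Stieltjes transforms as an integral against a single (complex) test function and then control that function through the bounded Lipschitz metric. Put $z = x + iy$ with $y = \Im(z) \in (0,1)$, and note that $\cs_{\mu_1}(z) - \cs_{\mu_2}(z) = \int_\R f_z \, d(\mu_1 - \mu_2)$ for $f_z(\lambda) \coloneq (\lambda - z)^{-1}$. Since $|\lambda - z| \geq y$ for every $\lambda \in \R$, the elementary bounds $|f_z(\lambda)| \leq y^{-1}$ and $|f_z'(\lambda)| = |\lambda - z|^{-2} \leq y^{-2}$ hold uniformly in $\lambda$. Because $y \in (0,1)$ gives $y^{-1} \leq y^{-2}$, both the real part $g_1 \coloneq \Re f_z$ and the imaginary part $g_2 \coloneq \Im f_z$ are continuous real functions lying in $\operatorname{Lip}_{y^{-2}}$: each has sup norm at most $y^{-2}$ and Lipschitz constant at most $y^{-2}$ (the latter because $|g_k(\lambda) - g_k(\lambda')| \leq |f_z(\lambda) - f_z(\lambda')| \leq y^{-2}|\lambda - \lambda'|$).

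Next I would use the homogeneity of the classes $\operatorname{Lip}_a$ defined in (\ref{Eq_Def_Lip}) — namely $g \in \operatorname{Lip}_a$ implies $a^{-1} g \in \operatorname{Lip}_1$ — together with the definition of $d_{\operatorname{BL}}$ in (\ref{Eq_Def_BL_distance}), to get $\big| \int_\R g_k \, d(\mu_1 - \mu_2) \big| \leq y^{-2} \, d_{\operatorname{BL}}(\mu_1,\mu_2)$ for $k \in \{1,2\}$. Splitting $\int_\R f_z \, d(\mu_1-\mu_2)$ into its real and imaginary parts and applying the triangle inequality then yields $|\cs_{\mu_1}(z) - \cs_{\mu_2}(z)| \leq 2 y^{-2} \, d_{\operatorname{BL}}(\mu_1,\mu_2)$, which is exactly (\ref{Eq_StieltjesPerturbation}) after multiplying through by $y^2/2$.

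I do not expect a genuine obstacle: the argument is elementary. The only two points that need a little care are that $f_z$ is complex-valued, so one must pass to the two real-valued Lipschitz test functions $g_1, g_2$ (this is the source of the factor $2$), and that the hypothesis $\Im(z) < 1$ is precisely what lets the sup-norm bound $y^{-1}$ be absorbed into the Lipschitz-constant bound $y^{-2}$, so that a single membership statement $g_k \in \operatorname{Lip}_{y^{-2}}$ suffices without having to track two separate constants in the definition of $\operatorname{Lip}$.
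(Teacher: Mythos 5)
Your proof is correct and follows essentially the same route as the paper: bound $|f_z|\leq \Im(z)^{-1}$ and the Lipschitz constant by $\Im(z)^{-2}$, use $\Im(z)<1$ to conclude $\Im(z)^2(\Re\circ f_z),\,\Im(z)^2(\Im\circ f_z)\in\operatorname{Lip}_1$, and split into real and imaginary parts, which produces the factor $2$. No issues.
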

\begin{proof}\
	\\
	For any $z \in \C^+$ the function $f_z : \R \rightarrow \C \ ; \ x \mapsto \frac{1}{x-z}$ is easily seen to satisfy
	\begin{align*}
		& |f_z(x)| \leq \frac{1}{\Im(z)} \ \ \text{ and } \ \ |f_z(x) - f_z(y)| \leq \frac{|x-y|}{\Im(z)^2}
	\end{align*}
	for all $x,y \in \R$. Consequently, for all $z \in \C^+$ with $\Im(z) \in (0,1)$, we have
	\begin{align*}
		& \Im(z)^2 (\Re \circ f_z) , \, \Im(z)^2 (\Im \circ f_z) \in \operatorname{Lip}_1 \ .
	\end{align*}
	The definitions of $\operatorname{d}_{BL}$ and $\cs_{\mu}$ yield the wanted bound.
\end{proof}
\
\\
Since $\bm{X}^{(K)}_2=\ul{\bm{X}}^{(K)}_2$, (\ref{Eq_McDiarmid_Preparation1}) shows
\begin{align}\label{Eq_McDiarmid_Preparation2}
	& \big| \cs_{\hat{\mu}_{S^{(K)}(\frac{2\pi r}{n})}}(z) - \cs_{\hat{\mu}_{\ul{S}^{(K)}(\frac{2\pi r}{n})}}(z) \big| \leq \frac{96K}{d \Im(z)^2}
\end{align}
and we are ready to apply McDiarmid's inequality. Define the map
\begin{align*}
	& f_{r,z} : (\xi_{-K},...,\xi_{n-1}) \mapsto \cs_{\hat{\mu}_{\mathcal{S}^{(K)}(\frac{2\pi r}{n})}}(z)
\end{align*}
and use McDiarmind's inequality (Theorem 6.2 in \cite{BoucheronConcentrationInequalities}) with (\ref{Eq_McDiarmid_Preparation2}) for
\begin{align*}
	& \bP\Big( \Big| f_{r,z}(\xi_{-K},...,\xi_{n-1}) - \E[f_{r,z}(\xi_{-K},...,\xi_{n-1})] \Big| > \varepsilon \Big) \leq 2 \exp\bigg( -\frac{4 \varepsilon^2}{(\underbrace{n+K}_{\leq 2n}) \big(\frac{96 K}{d \Im(z)^2}\big)^2} \bigg) \ ,
\end{align*}
which by the choice of $f_{r,z}$ can be written as
\begin{align}\label{Eq_McDiarmidApplication}
	& \bP\Big( \Big| \cs_{\hat{\mu}_{S^{(K)}(\frac{2\pi r}{n})}}(z) - \E[\cs_{\hat{\mu}_{S^{(K)}(\frac{2\pi r}{n})}}(z)] \Big| > \varepsilon \Big) \leq 2 \exp\bigg( -\frac{2\varepsilon^2 d^2 \Im(z)^4}{n 96^2 K^2} \bigg)
\end{align}
holds for all $\varepsilon > 0$, $r \in \{0,...,n-1\}$ and $z \in \C$ with $\Im(z) \in (0,1)$. By (\ref{Eq_Assumption_Asymptotics}) and (\ref{Eq_Univ_DefK}) the right hand side is less than $2 \exp\big( -C\varepsilon^2 \Im(z)^4 n^{2\delta} \big)$ for some constant $C>0$. We proceed with an $\varepsilon$-net argument to follow (\ref{Eq_Univ_Step2}).
\\
\\
The Stieltjes transform satisfies the Lipschitz property
\begin{align*}
	& \big|\cs_{\mu}(z_1) - \cs_{\mu}(z_2)\big| \leq \int_\R \Big| \frac{1}{\lambda-z_1} - \frac{1}{\lambda-z_2} \Big| \, d\mu(\lambda)\\
	& = \int_\R \frac{|z_2-z_1|}{|(\lambda-z_1)(\lambda-z_2)|} \, d\mu(\lambda) \leq \frac{|z_1-z_2|}{\Im(z)^2}
\end{align*}
for any probability measure $\mu$ on $\R$.
Let $J_n$ be a $\frac{1}{n}$-net of the set $Q_n = [-n,n] \times i[n^{-\delta/4},n]$, which can be chosen with $\#J_n \leq 4n^4$. The above Lipschitz property implies
\begin{align*}
	& \sup\limits_{z \in Q_n} \Big| \cs_{\hat{\mu}_{\mathcal{S}^{(K)}(\frac{2\pi r}{n})}}(z) - \E[\cs_{\hat{\mu}_{\mathcal{S}^{(K)}(\frac{2\pi r}{n})}}(z)] \Big| \leq \sup\limits_{z \in J_n} \Big| \cs_{\hat{\mu}_{\mathcal{S}^{(K)}(\frac{2\pi r}{n})}}(z) - \E[\cs_{\hat{\mu}_{\mathcal{S}^{(K)}(\frac{2\pi r}{n})}}(z)] \Big| + 2\frac{1/n}{n^{-\delta/2}}
\end{align*}
and we see
\begin{align*}
	& \bP\Big( \sup\limits_{r \in \{0,...,n-1\}} \sup\limits_{z \in Q_n} \Big| \cs_{\hat{\mu}_{\mathcal{S}^{(K)}(\frac{2\pi r}{n})}}(z) - \E[\cs_{\hat{\mu}_{\mathcal{S}^{(K)}(\frac{2\pi r}{n})}}(z)] \Big| > \varepsilon \Big)\\
	& \leq \bP\Big( \sup\limits_{r \in \{0,...,n-1\}} \sup\limits_{z \in J_n} \Big| \cs_{\hat{\mu}_{\mathcal{S}^{(K)}(\frac{2\pi r}{n})}}(z) - \E[\cs_{\hat{\mu}_{\mathcal{S}^{(K)}(\frac{2\pi r}{n})}}(z)] \Big| + 2n^{\delta/2-1} > \varepsilon \Big)\\
	& \leq \sum\limits_{r=0}^{n-1} \sum\limits_{z \in J_n} \bP\Big( \Big| \cs_{\hat{\mu}_{\mathcal{S}^{(K)}(\frac{2\pi r}{n})}}(z) - \E[\cs_{\hat{\mu}_{\mathcal{S}^{(K)}(\frac{2\pi r}{n})}}(z)] \Big| > \varepsilon - 2n^{\delta/2-1} \Big)\\
	& \hspace{-0.25cm} \overset{\text{(\ref{Eq_McDiarmidApplication})}}{\leq} \sum\limits_{r=0}^{n-1} \sum\limits_{z \in J_n} 2 \exp\big( -C(\varepsilon-2n^{\delta/2-1})^2 \Im(z)^4 n^{2\delta} \big)\\
	& \leq n \, 4n^4 \, 2 \exp\big( -C(\varepsilon-2n^{\delta/2-1})^2 n^{-\delta} n^{2\delta} \big) = 8n^5 \exp\big( -C(\varepsilon-2n^{\delta/2-1})^2 n^{\delta} \big) \ ,
\end{align*}
where the right hand side is clearly summable over $n$ and Borel-Cantelli yields
\begin{align*}
	& 1 = \bP\Big( \forall\theta\in [0,2\pi) \, \forall z \in \C^+ : \ \cs_{\hat{\mu}_{S^{(K)}(\theta)}}(z) - \E[\cs_{\hat{\mu}_{S^{(K)}(\theta)}}(z)] \xrightarrow{n \rightarrow \infty} 0 \Big) \ .
\end{align*}
This has worked completely without examining the distributions of the innovations $(\eta_t)_{t \in \Z}$, so the above property also holds for $\mathcal{S}^{(K)}$ and we have shown (\ref{Eq_Univ_Step2}).

\subsubsection{Step 3: Proving approximation by finite linear processes (\ref{Eq_Univ_Step3})}

With standard analysis we may make use of the bounds (\ref{Eq_Univ_MeanMax}) and (\ref{Eq_Assumption_LongRandeDependence_gamma}) to get the following result.

\begin{lemma}[Approximation by a finite linear process]\label{Lemma_Approximation_FiniteLinear}\
	\\
	Under the conditions of Theorem \ref{Thm_MarchenkoPastur_Daniell} for fixed $K \in \N$ and $S^{(K)}$ as in (\ref{Eq_Def_SK}) the bound
	\begin{align*}
		& \E\Big[ \big| \cs_{\hat{\mu}_{S(\frac{2\pi r}{n})}}(z) - \cs_{\hat{\mu}_{S^{(K)}(\frac{2\pi r}{n})}}(z) \big|^6 \Big] \leq C(L) \frac{d^{\frac{2}{L}} K^{-6\gamma}}{\Im(z)^4}
	\end{align*}
	holds for any $L \in \N$ and some constant $C(L) > 0$.
\end{lemma}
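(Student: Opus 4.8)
The plan is to reduce the claim to a moment bound for the operator norm of the ``tail part'' of $\bm X$, and to prove that bound by the trace–expansion technology of Theorem~\ref{Thm_TraceMomentBound} / Lemma~\ref{Lemma_CrudeTraceMomentBound}. Write $S \coloneq S(\tfrac{2\pi r}{n}) = \bm A\bm A^{*}$ with $\bm A \coloneq \frac{1}{\sqrt{2m+1}}\bm X V D_{r}^{1/2}$, and $S^{(K)} \coloneq S^{(K)}(\tfrac{2\pi r}{n}) = \bm B\bm B^{*}$ with $\bm B$ as in (\ref{Eq_Def_B}); put $\bm B^{E} \coloneq \bm A-\bm B = \frac{1}{\sqrt{2m+1}}\bm E\,V D_{r}^{1/2}$, where $\bm E \coloneq \bm X-\bm X^{(K)}$ is the $(d\times n)$ matrix with $t$-th column $E_{t} \coloneq \sum_{k>K}\Psi_{k}\eta_{t-k}$. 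First I would prove the deterministic estimate
\[
\big|\cs_{\hat\mu_{S}}(z)-\cs_{\hat\mu_{S^{(K)}}}(z)\big|\ =\ \tfrac1d\big|\tr\!\big[(S-S^{(K)})(S^{(K)}-z)^{-1}(S-z)^{-1}\big]\big|\ \le\ \frac{C_{\cK_{1}}}{\Im(z)^{2}}\,\|\bm B^{E}\|\,\big(\|\bm B\|+\|\bm B^{E}\|\big),
\]
which follows from the resolvent identity, the bound (\ref{Eq_TraceBound_Neumann}) applied with $\operatorname{rank}(S-S^{(K)})\le 3(2m+1)$ (as $S-S^{(K)}=\bm B(\bm B^{E})^{*}+\bm B^{E}\bm B^{*}+\bm B^{E}(\bm B^{E})^{*}$ and $VD_{r}V^{*}$ has rank $2m+1$), $\|(S^{(K)}-z)^{-1}(S-z)^{-1}\|\le\Im(z)^{-2}$, and $\tfrac{2m+1}{d}\le C_{\cK_{1}}$.

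Raising this to the sixth power, discarding the lower–order term $\|\bm B^{E}\|^{12}$ against $\|\bm B^{E}\|^{6}\|\bm B\|^{6}$, taking expectations, and bounding $\E[\|\bm B^{E}\|^{6}\|\bm B\|^{6}]\le\E[\|\bm B^{E}\|^{12}]^{1/2}\E[\|\bm B\|^{12}]^{1/2}$ by Cauchy--Schwarz, the lemma is reduced to the moment estimates $\E[\|\bm B\|^{2p}]\le C_{p,L}\,d^{\,1+1/L}$ and $\E[\|\bm B^{E}\|^{2p}]\le C_{p,L}\,(\cK_{2}^{2}K^{-2\gamma})^{p}\,d^{\,1+1/L}$ (the $d^{1/L}$ coming from (\ref{Eq_Univ_MeanMax})), valid for every $p$. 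Indeed, a Jensen step $\E[\|\cdot\|^{12}]\le\E[\|\cdot\|^{12P}]^{1/P}$ converts the surplus $d^{\,1+1/L}$ into $d^{\,(1+1/L)\,3/P}$, which is $\le d^{2/L}$ once $P$ is chosen large enough, while the three $\bm B^{E}$–pairs carry the factor $(\cK_{2}^{2}K^{-2\gamma})^{3}=\cK_{2}^{6}K^{-6\gamma}$; combining additionally with the trivial bound $|\cs_{\hat\mu}(z)|\le\Im(z)^{-1}$ then fixes the power of $\Im(z)^{-1}$.

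The two moment estimates are instances of the same combinatorial computation. For $\E[\|\bm B\|^{2p}]\le\E[\tr((\bm B\bm B^{*})^{p})]$ one expands the trace entrywise exactly as in the proof of Theorem~\ref{Thm_TraceMomentBound} (or Lemma~\ref{Lemma_CrudeTraceMomentBound} at moment order $p$ in place of $3$), bounding the innovation moments by (\ref{Eq_Univ_MeanMax}) and the filter coefficients by $\|\Psi_{0}\|+\sum_{k\ge1}k\|\Psi_{k}\|\le\cK_{2}$. For $\bm B^{E}$ the identical expansion applies with the tail filter $(\Psi_{k})_{k>K}$ replacing $(\Psi_{k})_{k\ge0}$, and its smallness enters through (\ref{Eq_Assumption_LongRandeDependence_gamma}) via
\[
\sum_{u\ge0}\big\|\Cov[E_{0},E_{u}]\big\|\ \le\ \sum_{k>K}\|\Psi_{k}\|\sum_{l\ge k}\|\Psi_{l}\|\ \le\ \cK_{2}K^{-\gamma}\sum_{k>K}\|\Psi_{k}\|\ \le\ \cK_{2}^{2}K^{-2\gamma},
\]
so that each of the $p$ pairs produced by the column summation contributes a factor $\cK_{2}^{2}K^{-2\gamma}$, giving $\E[\|\bm B^{E}\|^{2p}]\le C_{p,L}(\cK_{2}^{2}K^{-2\gamma})^{p}d^{\,1+1/L}$; a harmless $\log m$ arising from the non–exact orthogonality of the discrete Fourier frequencies in this expansion is absorbed into the $d^{2/L}$.

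The main obstacle is precisely this moment bound for $\|\bm B^{E}\|$: one must show that the Daniell–windowed periodogram of the filter tail, whose randomness is concentrated in neither rows nor columns, nevertheless has operator norm of order $K^{-\gamma}$ on average in the regime $d\asymp m\ll n$ --- a naive estimate $\|\bm B^{E}\|\le\|\bm E\|/\sqrt{2m+1}$ is off by a factor $\sqrt{n/d}$ and is useless here, which is exactly the situation the trace–expansion machinery of Theorem~\ref{Thm_TraceMomentBound} and Lemma~\ref{Lemma_CrudeTraceMomentBound} is built to handle. The remaining ingredients --- the deterministic resolvent bound, the Cauchy--Schwarz/Jensen bookkeeping, the collecting of the $\Im(z)$–powers, and the absorption of the logarithm --- are routine.
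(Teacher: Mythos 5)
Your reduction step is sound and is essentially the one the paper uses: the paper replaces your resolvent identity by the interpolation $X^{(K,\tau)}_t=\tau X_t+(1-\tau)X^{(K)}_t$ and differentiates $\cs_{\hat{\mu}_{S^{(K,\tau)}}}(z)$ in $\tau$ via (\ref{Eq_A_derivative1}), arriving at the same type of estimate $|\cs_{\hat{\mu}_{S}}(z)-\cs_{\hat{\mu}_{S^{(K)}}}(z)|\lesssim \Im(z)^{-2}\,\|\bm{B}^{E}\|\,(\|\bm{B}\|+\|\bm{B}^{E}\|)$, and your smallness mechanism $\sum_{k>K}\|\Psi_k\|\le\cK_2K^{-\gamma}$ is also the paper's. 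The genuine gap is in the moment bounds you invoke to close the argument. Your Cauchy--Schwarz step forces control of $\E[\|\bm{B}\|^{12}]$ and $\E[\|\bm{B}^{E}\|^{12}]$, and the Jensen step that is supposed to reduce the surplus power of $d$ to $d^{2/L}$ in fact requires $\E[\|\cdot\|^{2p}]\le C_{p,L}\,d^{1+1/L}$ for arbitrarily large $p$. You justify these by citing Theorem \ref{Thm_TraceMomentBound} and Lemma \ref{Lemma_CrudeTraceMomentBound} ``at moment order $p$''. But this lemma is needed precisely in the universality part of the proof, where the innovations are \emph{not} Gaussian: Theorem \ref{Thm_TraceMomentBound} rests on Wick's formula and does not apply, and Lemma \ref{Lemma_CrudeTraceMomentBound} is proved only for the third power of the Gram matrix --- its treatment of the non-pairing contributions (the remainder $R$) and the cycle counts $I(\bm{\pi}),J(\bm{\pi})$ is tailored to six factors and does not transfer verbatim to $2p$ factors, where general partitions with blocks of size larger than two appear and must be counted against (A4). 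Moreover, since $\bm{B}$ and $\bm{B}^{E}$ are built from overlapping innovations, $\E[\|\bm{B}\|^{6}\|\bm{B}^{E}\|^{6}]$ does not factor, so your route cannot avoid these higher moments; the paper's interpolation bookkeeping is organized exactly so that only sixth moments of the two operator norms enter, which Lemma \ref{Lemma_CrudeTraceMomentBound} and its tail analogue (with $\sum_{k>K}\|\Psi_k\|$ in place of $\sum_{k\ge0}\|\Psi_k\|$) supply. Your claimed scaling $\E[\|\bm{B}^{E}\|^{2p}]\le C_{p,L}(\cK_2^{2}K^{-2\gamma})^{p}d^{1+1/L}$ also presupposes that the column covariances of $\frac{1}{\sqrt{2m+1}}(\bm{X}-\bm{X}^{(K)})VD_r^{1/2}$ are essentially diagonal after the DFT; the ``harmless $\log m$'' you wave at is exactly the place where this has to be checked in the regime $d\asymp m\ll n$. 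In short, the central quantitative input of your proof is asserted rather than proved, and it is not covered by any result in the paper.

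A second, smaller point: your plan to ``fix the power of $\Im(z)$'' by the trivial bound $|\cs_{\hat{\mu}}(z)|\le\Im(z)^{-1}$ cannot produce the stated exponent. Each use of the trivial bound on one of the six factors costs one factor $K^{-\gamma}$; retaining $K^{-6\gamma}$ forces you to keep all six factors of the deterministic estimate and hence $\Im(z)^{-12}$, so no interpolation between the two bounds reaches $\Im(z)^{-4}$ together with $K^{-6\gamma}$. (Downstream, in Step 3, only $\Im(z)\ge n^{-\iota}$ is used, so an $\Im(z)^{-12}$ version would serve the same purpose, but as a proof of the lemma as stated this step does not go through.)
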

\
\\
We proceed with an $\varepsilon$-net argument to show (\ref{Eq_Univ_Step3}). It is here that Assumption (\ref{Eq_Assumption_UniversalityRequirement}) is needed.
\\
\\
Plugging the definition (\ref{Eq_Univ_DefK}) of $K$ into the above Lemma we get
\begin{align*}
	& \frac{C(L) \cK_2^{\frac{2}{L}}}{\Im(z)^4} n^{\frac{2\alpha}{L}} n^{-6\gamma (\min(\frac{1-\alpha}{2}, \alpha-\frac{1}{2})-\delta)} \ .
\end{align*}
For any fixed small $\iota > 0$ let $J_n$ be an $n^{-2\iota}$-net of the set $Q_n = [-n^{\iota},n^{\iota}] \times i[n^{-\iota},n^{\iota}]$, which can be chosen with $\#J_n \leq 4n^{4\iota}$ and analogously to Step 2 we have
\begin{align*}
	& \sup\limits_{z \in Q_n} \Big| \cs_{\hat{\mu}_{S^{(K)}(\frac{2\pi r}{n})}}(z) - \cs_{\hat{\mu}_{S^{(K)}(\frac{2\pi r}{n})}}(z) \Big| \leq \sup\limits_{z \in J_n} \Big| \cs_{\hat{\mu}_{S^{(K)}(\frac{2\pi r}{n})}}(z) - \cs_{\hat{\mu}_{S^{(K)}(\frac{2\pi r}{n})}}(z) \Big| + 2n^{-\iota}
\end{align*}
We calculate
\begin{align*}
	& \bP\Big( \forall r \in \{0,...,n-1\} , \, \forall t \in Q_n : \ \Big| \cs_{\hat{\mu}_{S^{(K)}(\frac{2\pi r}{n})}}(z) - \cs_{\hat{\mu}_{S(\frac{2\pi r}{n})}}(z) \Big| \geq \varepsilon \Big)\\
	& \leq \bP\Big( \forall r \in \{0,...,n-1\} , \, \forall t \in J_n : \ \Big| \cs_{\hat{\mu}_{S^{(K)}(\frac{2\pi r}{n})}}(z) - \cs_{\hat{\mu}_{S(\frac{2\pi r}{n})}}(z) \Big| \geq \varepsilon - 2n^{-\iota} \Big)\\
	& \leq \sum\limits_{r=0}^{n-1} \sum\limits_{z \in J_n} \bP\Big( \Big| \cs_{\hat{\mu}_{S^{(K)}(\frac{2\pi r}{n})}}(z) - \cs_{\hat{\mu}_{S(\frac{2\pi r}{n})}}(z) \Big| \geq \varepsilon - 2n^{-\iota} \Big)\\
	& \leq \sum\limits_{r=0}^{n-1} \sum\limits_{z \in J_n} (\varepsilon - 2n^{-\iota})^{-6} \frac{C(L) \cK_2^{\frac{2}{L}}}{\Im(z)^4} n^{\frac{2\alpha}{L}} n^{-6\gamma (\min(\frac{1-\alpha}{2}, \alpha-\frac{1}{2})-\delta)}\\
	& \leq n \, 4n^{4\iota} \, (\varepsilon - 2n^{-\iota})^{-6} \frac{C(L) \cK_2^{\frac{2}{L}}}{n^{-4\iota}} n^{\frac{2\alpha}{L}} n^{-6\gamma (\min(\frac{1-\alpha}{2}, \alpha-\frac{1}{2})-\delta)}\\
	& = 4C(L)\cK_2^{\frac{2}{L}} \, (\varepsilon - 2n^{-\iota})^{-6} \, n^{8\iota+\frac{2\alpha}{L}} \, n^{1-6\gamma (\min(\frac{1-\alpha}{2}, \alpha-\frac{1}{2})-\delta)}
\end{align*}
The assumption
\begin{align*}
	& \gamma > \frac{1}{3\min(\frac{1-\alpha}{2}, \alpha-\frac{1}{2})}
\end{align*}
for small enough $\delta, \iota > 0$ and large enough $L \in \N$ guarantees the summability of the right hand side, giving
\begin{align*}
	& 1 = \bP\Big( \forall \theta \in [0,2\pi) \, \forall z \in \C^+ : \ \cs_{\hat{\mu}_{S^{(K)}(\frac{2\pi r}{n})}}(z) - \cs_{\hat{\mu}_{S(\frac{2\pi r}{n})}}(z) \xrightarrow{n \rightarrow \infty} 0 \Big) \ .
\end{align*}
The same arguments work for $\mathcal{S}^{(K)}$ and we have proven (\ref{Eq_Univ_Step3}).
\\
\\
We finally combine the three steps (\ref{Eq_Univ_Step1}), (\ref{Eq_Univ_Step2}) and (\ref{Eq_Univ_Step3}) into
\begin{align*}
	& 1 = \bP\Big( \forall \theta \in [0,2\pi) \, \forall z \in \C^+ : \ \cs_{\hat{\mu}_{\mathcal{S}(\theta)}}(z) - \cs_{\hat{\mu}_{S(\theta)}}(z) \xrightarrow{n \rightarrow \infty} 0 \Big) \ .
\end{align*}
As we have already shown Theorem \ref{Thm_MarchenkoPastur_Daniell} in the Gaussian case, we by Theorem 2.4.4 of \cite{AndersonIRM} know
\begin{align*}
	& 1 = \bP\Big( \forall \theta \in [0,2\pi) \, \forall z \in \C^+ : \ \cs_{\hat{\mu}_{\mathcal{S}(\theta)}}(z) \xrightarrow{n \rightarrow \infty} \cs_{\nu_\infty(\theta)}(z) \Big) \ ,
\end{align*}
which with the above result directly yields
\begin{align*}
	& 1 = \bP\Big( \forall \theta \in [0,2\pi) \, \forall z \in \C^+ : \ \cs_{\hat{\mu}_{S(\theta)}}(z) \xrightarrow{n \rightarrow \infty} \cs_{\nu_\infty(\theta)}(z) \Big) \ .
\end{align*}
Again by Theorem 2.4.4 of \cite{AndersonIRM} this is equivalent to
\begin{align*}
	& 1 = \bP\Big( \forall \theta \in [0,2\pi) : \ \hat{\mu}_{S(\theta)} \xRightarrow{n \rightarrow \infty} \nu_\infty(\theta) \Big) \ .
\end{align*}
This concludes the proof of Theorem \ref{Thm_MarchenkoPastur_Daniell}. \qed

\newpage
\appendix
\section{Proofs of lemmas}

\subsection{Proof of Lemma \ref{Lemma_AlmostWishart}}\label{Proof_Lemma_AlmostWishart}
The entries of $\bm{\eta} V$ are by construction complex Gaussian with covariance structure
\begin{align*}
	& \E\big[ (\bm{\eta}V)_{j_1,s_1} (\bm{\eta}V)_{j_2,s_2} \big] = \mathbbm{1}_{j_1=j_2} \, \mathbbm{1}_{s_1 = n+2-s_2} \ \ ; \ \ \E\big[ (\bm{\eta}V)_{j_1,s_1} \ol{(\bm{\eta}V)_{j_2,s_2}} \big] = \mathbbm{1}_{j_1=j_2} \, \mathbbm{1}_{s_1 = s_2} \ ,
\end{align*}
as we see by the calculations
\begin{align*}
	& \E\big[ (\bm{\eta}V)_{j_1,s_1} (\bm{\eta}V)_{j_2,s_2} \big] = \frac{1}{n} \sum\limits_{l_1,l_2=1}^n \E\big[ \bm{\eta}_{j_1,l_1} \bm{\eta}_{j_2,l_2} \big] e^{-2\pi i \frac{(l_1-1)(s_1-1)}{n}} e^{-2\pi i \frac{(l_2-1)(s_2-1)}{n}}\\
	& = \mathbbm{1}_{j_1=j_2} \, \frac{1}{n} \sum\limits_{l=1}^n e^{-2\pi i \frac{(l-1)(s_1+s_2-2)}{n}} = \mathbbm{1}_{j_1=j_2} \, \mathbbm{1}_{s_1+s_2-2 = 0 \mod n} = \mathbbm{1}_{j_1=j_2} \, \mathbbm{1}_{s_1 = n+2-s_2}
\end{align*}
and
\begin{align*}
	& \E\big[ (\bm{\eta}V)_{j_1,s_1} \ol{(\bm{\eta}V)_{j_2,s_2}} \big] = \frac{1}{n} \sum\limits_{l_1,l_2=1}^n \E\big[ \bm{\eta}_{j_1,l_1} \bm{\eta}_{j_2,l_2} \big] e^{-2\pi i \frac{(l_1-1)(s_1-1)}{n}} e^{2\pi i \frac{(l_2-1)(s_2-1)}{n}}\\
	& = \mathbbm{1}_{j_1=j_2} \, \frac{1}{n} \sum\limits_{l=1}^n e^{-2\pi i \frac{(l-1)(s_1-s_2)}{n}} = \mathbbm{1}_{j_1=j_2} \, \mathbbm{1}_{s_1-s_2 = 0 \mod n} = \mathbbm{1}_{j_1=j_2} \, \mathbbm{1}_{s_1 = s_2} \ .
\end{align*}
It follows that:
\begin{itemize}
	
	\item[i)] The entries of $\big[ (\bm{\eta}V)_{\bullet,1},...,(\bm{\eta}V)_{\bullet,\lfloor \frac{n}{2} \rfloor} \big]$ are i.i.d. standard complex Gaussian. Additionally, for odd $n$, the entries of $(\bm{\eta}V)_{\bullet,\lceil \frac{n}{2} \rceil}$ are i.i.d. standard normal and also independent of the other columns.
	
	\item[ii)] The second half of the columns is the complex conjugate of the first half in the sense that $(\bm{\eta}V)_{\bullet,n+2-s} = \ol{(\bm{\eta}V)_{\bullet,s}}$ for all $s \in \{2,...,n\}$.
\end{itemize}
Property (i) follows immediately from the described covariance/relation structure. Property (ii) follows from the calculation
\begin{align*}
	& \E\big[ \big| (\bm{\eta}V)_{j,n+2-s} - \ol{(\bm{\eta}V)_{j,s}} \big|^2 \big] = \E\big[ \big( (\bm{\eta}V)_{j,n+2-s} - \ol{(\bm{\eta}V)_{j,s}} \big) \big( \ol{(\bm{\eta}V)_{j,n+2-s}} - (\bm{\eta}V)_{j,s} \big) \big]\\
	& = \E\big[ (\bm{\eta}V)_{j,n+2-s} \ol{(\bm{\eta}V)_{j,n+2-s}} \big] - \E\big[ (\bm{\eta}V)_{j,n+2-s} (\bm{\eta}V)_{j,s} \big]\\
	& \hspace{0.5cm} - \E\big[ \ol{(\bm{\eta}V)_{j,s}} \ol{(\bm{\eta}V)_{j,n+2-s}} \big] + \E\big[ \ol{(\bm{\eta}V)_{j,s}} (\bm{\eta}V)_{j,s} \big]\\
	& = 1 - 1 - 1 + 1 = 0 \ .
\end{align*}
With these preliminaries, we now start the proof of (a) and (b).
\begin{itemize}
	\item[a)]
	By definition of $D_r$ and $\rho_n(r,0) > m$ we see
	\begin{align*}
		& \bm{\eta} V D_r^{\frac{1}{2}} = \big[ (\bm{\eta}V)_{\bullet,r+1-m},...,(\bm{\eta}V)_{\bullet,r+1+m} \big] \ .
	\end{align*}
	The assumptions $\rho_n(r,0) > m$ and $\rho_n(r,\frac{n}{2}) > m$ ensure that the range $r+1-m,...,r+1+m$ is either entirely in the first or second half of $\{1,...,n\}$. Property (i) thus guarantees that the entries of $\bm{\eta} V D_r^{\frac{1}{2}}$ are i.i.d. standard complex Gaussian, which proves (a).
	
	\item[b)]
	For $r=0$ we simply with the definition of $D_0$ and (ii) calculate
	\begin{align*}
		& \bm{\eta} V D_0 V^* \bm{\eta}^\top = \big[ (\bm{\eta}V)_{\bullet,1},...,(\bm{\eta}V)_{\bullet,m+1},(\bm{\eta}V)_{\bullet,n-m+1},...,(\bm{\eta}V)_{\bullet,n} \big] \nonumber\\
		& \hspace{3cm} \times \big[ (\bm{\eta}V)_{\bullet,1},...,(\bm{\eta}V)_{\bullet,m+1},(\bm{\eta}V)_{\bullet,n-m+1},...,(\bm{\eta}V)_{\bullet,n} \big]^* \nonumber\\
		& = \big[ (\bm{\eta}V)_{\bullet,1},\sqrt{2}\Re\big((\bm{\eta}V)_{\bullet,2}\big),...,\sqrt{2}\Re\big((\bm{\eta}V)_{\bullet,m+1}\big),\\
		& \hspace{3cm} \sqrt{2}\Im\big((\bm{\eta}V)_{\bullet,2}\big),...,\sqrt{2}\Im\big((\bm{\eta}V)_{\bullet,m+1}\big) \big] \nonumber\\
		& \hspace{0.5cm} \times \big[ (\bm{\eta}V)_{\bullet,1},\sqrt{2}\Re\big((\bm{\eta}V)_{\bullet,2}\big),...,\sqrt{2}\Re\big((\bm{\eta}V)_{\bullet,m+1}\big),\\
		& \hspace{4cm}\sqrt{2}\Im\big((\bm{\eta}V)_{\bullet,2}\big),...,\sqrt{2}\Im\big((\bm{\eta}V)_{\bullet,m+1}\big) \big]^* \ .
	\end{align*}
	Let $\xi \sim \mathcal{N}(0,\frac{1}{2}\operatorname{Id}_d)$ be independent of $\bm{\eta}$.
	With the notation
	\begin{align*}
		& \bm{Z}_0 \coloneq \sqrt{2}\big[\xi, \Re\big((\bm{\eta}V)_{\bullet,2}\big),...,\Re\big((\bm{\eta}V)_{\bullet,m+1}\big), \Im\big((\bm{\eta}V)_{\bullet,2}\big),...,\Im\big((\bm{\eta}V)_{\bullet,m+1}\big) \big]
	\end{align*}
	we have shown
	\begin{align*}
		& \bm{\eta} V D_0 V^* \bm{\eta}^\top - \bm{Z}_0\bm{Z}_0^\top\\
		& = \underbrace{([(\bm{\eta}V)_{\bullet,1}]-\xi) \bm{Z}_0^\top + \bm{Z}_0([(\bm{\eta}V)_{\bullet,1}]-\xi)^\top + ([(\bm{\eta}V)_{\bullet,1}]-\xi)([(\bm{\eta}V)_{\bullet,1}]-\xi)^\top}_{= E_0} \ .
	\end{align*}
	It is clear that $\mathrm{rank}(E_0) \leq 3$ and $\bm{Z}_0$ by (i) and (ii) has real standard normal entries.\\
	For $r=\lfloor \frac{n}{2} \rfloor$ the calculations differ slightly depending on the parity of $n$. If $n$ is even, we have $r=\frac{n}{2}$ and analogously to the ($r=0$)-case have
	\begin{align*}
		& \bm{\eta} V D_{\frac{n}{2}} V^* \bm{\eta}^\top = \big[ (\bm{\eta}V)_{\bullet,\frac{n}{2}-m+1},...,(\bm{\eta}V)_{\bullet,\frac{n}{2}+m+1} \big] \times \big[ (\bm{\eta}V)_{\bullet,\frac{n}{2}-m+1},...,(\bm{\eta}V)_{\bullet,\frac{n}{2}+m+1} \big]^* \nonumber\\
		& = \big[ (\bm{\eta}V)_{\bullet,\frac{n}{2}},\sqrt{2}\Re(\bm{\eta}V)_{\bullet,\frac{n}{2}-m+1},...,\sqrt{2}\Re(\bm{\eta}V)_{\bullet,\frac{n}{2}-1},\\
		& \hspace{3cm} \sqrt{2}\Im(\bm{\eta}V)_{\bullet,\frac{n}{2}-m+1},...,\sqrt{2}\Im(\bm{\eta}V)_{\bullet,\frac{n}{2}-1} \big] \nonumber\\
		& \hspace{0.3cm} \times \big[ (\bm{\eta}V)_{\bullet,\frac{n}{2}},\sqrt{2}\Re(\bm{\eta}V)_{\bullet,\frac{n}{2}-m+1},...,\sqrt{2}\Re(\bm{\eta}V)_{\bullet,\frac{n}{2}-1},\\
		& \hspace{4cm} \sqrt{2}\Im(\bm{\eta}V)_{\bullet,\frac{n}{2}-m+1},...,\sqrt{2}\Im(\bm{\eta}V)_{\bullet,\frac{n}{2}-1} \big]^* \ .
	\end{align*}
	With the notation
	\begin{align*}
		& \bm{Z}_1 = \sqrt{2}\big[\xi, \Re(\bm{\eta}V)_{\bullet,\frac{n}{2}-m+1},...,\Re(\bm{\eta}V)_{\bullet,\frac{n}{2}-1},\Im(\bm{\eta}V)_{\bullet,\frac{n}{2}-m+1},...,\Im(\bm{\eta}V)_{\bullet,\frac{n}{2}-1} \big]
	\end{align*}
	it follows that
	\begin{align*}
		& \bm{\eta} V D_{\frac{n}{2}} V^* \bm{\eta}^\top - \bm{Z}_1\bm{Z}_1^\top\\
		& = \underbrace{\bm{Z}_1([(\bm{\eta}V)_{\bullet,\frac{n}{2}}]-\xi)^\top + (\bm{\eta}V)_{\bullet,\frac{n}{2}}]-\xi) \bm{Z}_1 + (\bm{\eta}V)_{\bullet,\frac{n}{2}}]-\xi)(\bm{\eta}V)_{\bullet,\frac{n}{2}}]-\xi)^\top}_{= E_1}
	\end{align*}
	and again $\mathrm{rank}(E_1) \leq 3$ and $\bm{Z}_1$ has real standard normal entries by (i) and (ii).\\
	Lastly, when $n$ is odd and $r=\lfloor \frac{n}{2} \rfloor = \frac{n-1}{2}$, we have
	\begin{align*}
		& \bm{\eta} V D_{\frac{n-1}{2}} V^* \bm{\eta}^\top = \big[ (\bm{\eta}V)_{\bullet,\frac{n-1}{2}-m+1},...,(\bm{\eta}V)_{\bullet,\frac{n-1}{2}+m+1} \big]\\
		& \hspace{5cm} \times \big[ (\bm{\eta}V)_{\bullet,\frac{n-1}{2}-m+1},...,(\bm{\eta}V)_{\bullet,\frac{n-1}{2}+m+1} \big]^* \nonumber\\
		& = \big[ (\bm{\eta}V)_{\bullet,\frac{n-1}{2}+m+1},\sqrt{2}\Re(\bm{\eta}V)_{\bullet,\frac{n-1}{2}-m+1},...,\sqrt{2}\Re(\bm{\eta}V)_{\bullet,\frac{n-1}{2}},\\
		& \hspace{3.5cm} \sqrt{2}\Im(\bm{\eta}V)_{\bullet,\frac{n-1}{2}-m+1},...,\sqrt{2}\Im(\bm{\eta}V)_{\bullet,\frac{n-1}{2}} \big] \nonumber\\
		& \hspace{0.3cm} \times \big[ (\bm{\eta}V)_{\bullet,\frac{n-1}{2}+m+1},\sqrt{2}\Re(\bm{\eta}V)_{\bullet,\frac{n-1}{2}-m+1},...,\sqrt{2}\Re(\bm{\eta}V)_{\bullet,\frac{n-1}{2}},\\
		& \hspace{4cm}\sqrt{2}\Im(\bm{\eta}V)_{\bullet,\frac{n-1}{2}-m+1},...,\sqrt{2}\Im(\bm{\eta}V)_{\bullet,\frac{n-1}{2}} \big]^*
	\end{align*}
	and the notation
	\begin{align*}
		& \bm{Z}_2 = \sqrt{2}\big[\xi, \Re(\bm{\eta}V)_{\bullet,\frac{n-1}{2}-m+1},...,\Re(\bm{\eta}V)_{\bullet,\frac{n-1}{2}}, \Im(\bm{\eta}V)_{\bullet,\frac{n-1}{2}-m+1},...,\Im(\bm{\eta}V)_{\bullet,\frac{n-1}{2}} \big]
	\end{align*}
	again allows
	\begin{align*}
		& \bm{\eta} V D_{\frac{n-1}{2}} V^* \bm{\eta}^\top = \bm{Z}_2\bm{Z}_2^\top + E_2
	\end{align*}
	for $\mathrm{rank}(E_2)\leq 3$ and (i) and (ii) also shows that $\bm{Z}_2$ has i.i.d. real standard normal entries. \qed
\end{itemize}

\subsection{Proof of Lemma \ref{Lemma_LowRankPerturbation}}\label{Proof_Lemma_LowRankPerturbation}
Let $K$ denote the maximum of $||A||$ and $||A+E||$ such that all eigenvalues under consideration are in the interval $[-K,K]$. As the rank of $E$ is no larger than $\rho$, the eigenvalue interlacing theorem (see for example Theorem 2.12 of \cite{Garoni}) states
\begin{align}\label{Eq_Interlacing0}
	& \lambda_{j-\rho}(A) \geq \lambda_j(A+E) \geq \lambda_{j+\rho}(A)
\end{align}
for all $j \in \{1,...,d\}$, where we can by construction of $K$ set $\lambda_{-k}(A) \coloneq -K$ and $\lambda_{d+1+k}(A) \coloneq K$ for all $k \geq 0$. 
\\
\\
For any fixed $F \in \operatorname{Lip}_1(\R)$ we are only interested in its behavior on the interval $[-K,K]$. Define its total variation $V: [-K,K] \rightarrow [0,\infty)$ as
\begin{align*}
	& V(x) \coloneq \sup\limits_{N \in \N} \, \sup\limits_{-K \leq t_0 \leq ... \leq t_N \leq x} \, \sum\limits_{j=1}^N |F(t_{j}) - F(t_{j-1})| \ ,
\end{align*}
then the functions $F_1,F_2 : [-K,K] \rightarrow \R$ defined by
\begin{align*}
	& F_1 = V \ \ \text{ and } \ \ F_2 = V-F+F(-K)
\end{align*}
are by construction non-decreasing with $F_1(-K) = 0 = F_2(-K)$ and satisfy $F - F(-K) = F_1-F_2$. One easily checks $F_1,F_2 \in \operatorname{Lip}_2(\R)$. The fact that $F_1$ is non-decreasing and the interlacing property (\ref{Eq_Interlacing0}) together yield
\begin{align*}
	& \bigg| \frac{1}{d} \sum\limits_{j=1}^d \big( F_1\big( \lambda_j(A) \big) - F_1\big( \lambda_j(A+E) \big) \big) \bigg|\\
	& \leq \frac{1}{d} \sum\limits_{j=1}^d \big| F_1\big( \lambda_j(A) \big) - F_1\big( \lambda_j(A+E) \big) \big|\\
	& \leq \frac{1}{d} \sum\limits_{j=1}^d \big( F_1( \lambda_{j-\rho}(A)) - F_1( \lambda_{j+\rho}(A)) \big)\\
	& = \frac{1}{d} \sum\limits_{j=1-\rho}^{\rho} F_1( \lambda_{j}(A)) - \frac{1}{d} \sum\limits_{j=d+1-\rho}^{d+\rho} \underbrace{F_1( \lambda_{j}(A))}_{\geq 0}\\
	& \leq \frac{1}{d} \sum\limits_{j=1-\rho}^{\rho} F_1( \lambda_{j}(A)) \leq \frac{2\rho}{d} ||F_1||_{[-K,K]} \leq \frac{4\rho}{d} \ .
\end{align*}
The very same steps can be applied for $F_2$ and we thus see
\begin{align*}
	& d_{\operatorname{BL}}(\hat{\mu}_{A},\hat{\mu}_{A+E}) \nonumber\\
	& = \sup\limits_{F \in \operatorname{Lip}_1(\R)} \bigg| \frac{1}{d} \sum\limits_{j=1}^d \big( F\big( \lambda_j(A) \big) - F\big( \lambda_j(A+E) \big) \big) \bigg| \nonumber\\
	& \leq \sup\limits_{F \in \operatorname{Lip}_1(\R)} \bigg( \bigg| \frac{1}{d} \sum\limits_{j=1}^d \big( F_1\big( \lambda_j(A) \big) - F_1\big( \lambda_j(A+E) \big) \big) \bigg| \nonumber\\
	& \hspace{2.5cm} + \bigg| \frac{1}{d} \sum\limits_{j=1}^d \big( F_2\big( \lambda_j(A) \big) - F_2\big( \lambda_j(A+E) \big) \big) \bigg| \bigg) \leq \frac{8\rho}{d} \ . \qed
\end{align*}

\subsection{Proof of Lemma \ref{Lemma_ApproxNeighboring}}
By (\ref{Eq_DaniellAsProduct}), the Daniell smoothed periodogram $S\big(\frac{2\pi r}{n}\big)$ has the form $Z Z^*$ for
\begin{align*}
	& Z = \frac{1}{\sqrt{2m+1}} \bm{X} V D_r^{\frac{1}{2}}
\end{align*}
and by definition (\ref{Eq_Def_tS}), $\tilde{S}$ has the form $\tilde{S}\big(\frac{2\pi r}{n}\big) = \tilde{Z} \tilde{Z}^*$ for
\begin{align*}
	& \tilde{Z} = \frac{1}{\sqrt{2m+1}} \tilde{\bm{X}} V D_r^{\frac{1}{2}} \ .
\end{align*}
Define
\begin{align*}
	& \mathbb{D}_1 \coloneq \operatorname{diag}(\underbrace{1,...,1}_{\times K},\underbrace{0,...,0}_{\times (n-K)}) \ \ ; \ \ \mathbb{D}_2 \coloneq \operatorname{diag}(\underbrace{0,...,0}_{\times K},\underbrace{1,...,1}_{\times (n-K)})
\end{align*}
and
\begin{align*}
	& Z_{1/2} \coloneq \frac{1}{\sqrt{2m+1}} \bm{X} \mathbb{D}_{1/2} V D_r^{\frac{1}{2}} \ \ ; \ \ \tilde{Z}_{1/2} \coloneq \frac{1}{\sqrt{2m+1}} \tilde{\bm{X}} \mathbb{D}_{1/2} V D_r^{\frac{1}{2}} \ ,
\end{align*}
then the matrix
\begin{align}\label{Eq_NeighboringE}
	& E \coloneq S\Big( \frac{2\pi r}{n} \Big) - Z_2 Z_2^* = (Z_1+Z_2)(Z_1+Z_2)^* - Z_2 Z_2^* = Z_1 Z_1^* + Z_1 Z_2^* + Z_2 Z_1^*
\end{align}
has at most rank $3K$ and Lemma \ref{Lemma_LowRankPerturbation} gives
\begin{align}\label{Eq_dBL_bound1}
	& d_{\operatorname{BL}}\big( \hat{\mu}_{S( \frac{2\pi r}{n})}, \hat{\mu}_{Z_2 Z_2^*} \big) \leq \frac{24K}{d}
\end{align}
and analogously
\begin{align}\label{Eq_dBL_bound2}
	& d_{\operatorname{BL}}\big( \hat{\mu}_{\tilde{S}( \frac{2\pi r}{n})}, \hat{\mu}_{\tilde{Z}_2 \tilde{Z}_2^*} \big) \leq \frac{24K}{d} \ .
\end{align}
We continue to bound the norm $||Z_2 Z_2^* - \tilde{Z}_2 \tilde{Z}_2^*||$. A basic identity is
\begin{align}\label{Eq_PolarizationIdentity}
	& Z_2 Z_2^\top - \tilde{Z}_2 \tilde{Z}_2^\top = \frac{1}{2} \Big( \big(Z_2 + \tilde{Z}_2\big)\big(Z_2 - \tilde{Z}_2\big)^\top + \big(Z_2 - \tilde{Z}_2\big)\big(Z_2 + \tilde{Z}_2\big)^\top \Big)
\end{align}
and we can trivially follow
\begin{align*}
	& ||Z_2 Z_2^* - \tilde{Z}_2 \tilde{Z}_2^*|| \leq ||Z_2 - \tilde{Z}_2|| \, ||Z_2 + \tilde{Z}_2|| \ .
\end{align*}
As Theorem \ref{Thm_TraceMomentBound} can only deal with real Gaussian matrices, we further bound
\begin{align}\label{Eq_ApproxNei1}
	& ||Z_2 Z_2^* - \tilde{Z}_2 \tilde{Z}_2^*|| \leq \big(||\overbrace{\Re(Z_2 - \tilde{Z}_2)}^{=: \bm{Y}_{\Re,-}}|| + ||\overbrace{\Im(Z_2 - \tilde{Z}_2)}^{=: \bm{Y}_{\Im,-}}||\big) \nonumber\\
	& \hspace{4cm} \times \big(||\underbrace{\Re(Z_2 + \tilde{Z}_2)}_{=: \bm{Y}_{\Re,+}}|| + ||\underbrace{\Im(Z_2 + \tilde{Z}_2)}_{=: \bm{Y}_{\Im,+}}||\big) \ .
\end{align}
The columns of $Z_2-\tilde{Z}_2$ are
\begin{align}\label{Eq_ZColumns_Decomp}
	& (Z_2)_{\bullet,s}-(\tilde{Z}_2)_{\bullet,s} = \frac{\mathbbm{1}_{\rho_n(r,s-1)\leq m}}{\sqrt{2m+1}} \sum\limits_{l=K+1}^n \big(\bm{X}_{\bullet,l} - \tilde{\bm{X}}_{\bullet,l}\big) V_{l,s} \nonumber\\
	& = \frac{1}{\sqrt{n}} \frac{\mathbbm{1}_{\rho_n(r,s-1)\leq m}}{\sqrt{2m+1}} \sum\limits_{l=K+1}^n \sum\limits_{k=0}^\infty \Psi_k \underbrace{(\eta_{l-1-k} - \eta_{(l-1-k) \mod n})}_{=0 \text{ for }k < l} \, e^{-2\pi i \frac{(l-1)(s-1)}{n}} \nonumber\\
	& = \frac{1}{\sqrt{n}} \frac{\mathbbm{1}_{\rho_n(r,s-1)\leq m}}{\sqrt{2m+1}} \sum\limits_{k=K+1}^\infty \Psi_k \sum\limits_{l=K+1}^{n \land k} e^{-2\pi i \frac{(l-1)(s-1)}{n}} (\eta_{l-1-k} - \eta_{(l-1-k) \mod n}) \ .
\end{align}
Define the index set $J_r \coloneq \{s \in \{1,...,n\} \mid \rho_n(r,s-1) \leq m\}$, then for the covariance of columns $(Z_2)_{\bullet,s}-(\tilde{Z}_2)_{\bullet,s}$ we see
\begin{align*}
	& \E\big[ \big( (Z_2)_{\bullet,s_1}-(\tilde{Z}_2)_{\bullet,s_1}\big) \big( (Z_2)_{\bullet,s_2}-(\tilde{Z}_2)_{\bullet,s_2}\big)^* \big]\\
	& = \frac{\mathbbm{1}_{s_1,s_2 \in J_r}}{2m+1} \frac{1}{n} \sum\limits_{k_1,k_2=K+1}^\infty \Psi_{k_1} \Psi_{k_2}^\top\\
	& \hspace{2cm} \times \sum\limits_{l_{1/2} = K+1}^{n \land k_{1/2}} e^{-2\pi i \frac{(l_1-1)(s_1-1) - (l_2-1)(s_2-1)}{n}} (\mathbbm{1}_{l_1-k_1=l_2-k_2} + \mathbbm{1}_{l_1-k_1=l_2-k_2 \mod n})\\
	& = \frac{\mathbbm{1}_{s_1,s_2 \in J_r}}{2m+1} \frac{1}{n} \sum\limits_{k_1,k_2=K+1}^\infty \Psi_{k_1} \Psi_{k_2}^\top \sum\limits_{l_{1/2} = K+1}^{n \land k_{1/2}} e^{-2\pi i \frac{(l_1-1)(s_1-1) - (l_2-1)(s_2-1)}{n}} \mathbbm{1}_{l_2=l_1+k_2-k_1}\\
	& \hspace{0.5cm} + \frac{\mathbbm{1}_{s_1,s_2 \in J_r}}{2m+1} \frac{1}{n} \sum\limits_{k_1,k_2=K+1}^\infty \Psi_{k_1} \Psi_{k_2}^\top \sum\limits_{l_{1/2} = K+1}^{n \land k_{1/2}} e^{-2\pi i \frac{(l_1-1)(s_1-1) - (l_2-1)(s_2-1)}{n}} \mathbbm{1}_{l_2=l_1+k_2-k_1 \mod n}
\end{align*}
and elementary bounds yield
\begin{align}\label{Eq_ApproxNei2}
	& \big|\big|\E\big[ (\bm{Y}_{\Re/\Im,-})_{\bullet,s} (\bm{Y}_{\Re/\Im,-})_{\bullet,s'}^\top \big]\big|\big| \leq \frac{\mathbbm{1}_{s_1,s_2 \in J_r}}{2m+1} \, 2 \bigg(\sum\limits_{k=K+1}^\infty ||\Psi_k||\bigg)^2
\end{align}
and analogously one gets
\begin{align}\label{Eq_ApproxNei3}
	& \big|\big|\E\big[ (\bm{Y}_{\Re/\Im,+})_{\bullet,s} (\bm{Y}_{\Re/\Im,+})_{\bullet,s'}^\top \big]\big|\big| \leq \frac{\mathbbm{1}_{s_1,s_2 \in J_r}}{2m+1} \, 4 \bigg(\sum\limits_{k=0}^\infty ||\Psi_k||\bigg)^2 \ .
\end{align}
The matrices $\bm{Y}_{\Re/\Im,\pm}$ are technically $(d \times n)$ matrices, but for the sake of Theorem \ref{Thm_TraceMomentBound} are effectively $(d \times (2m+1))$, since only $2m+1$ columns differ from zero. We apply Theorem \ref{Thm_TraceMomentBound} to matrix $\bm{Y}_{\Re/\Im,-}$ with $M=2m+1$ and $\kappa_- = 2 \big(\sum\limits_{k=K+1}^\infty ||\Psi_k||\big)^2$
to get
\begin{align*}
	& \bP\Big( \big|\big| \bm{Y}_{\Re/\Im,-} \big|\big| \geq \varepsilon_- \Big) = \bP\Big( \big|\big| \bm{Y}_{\Re/\Im,-}\bm{Y}_{\Re/\Im,-}^\top \big|\big| \geq \varepsilon_-^2 \Big) \leq \frac{1}{\varepsilon_-^{2L}} \E\Big[ \big|\big| \bm{Y}_{\Re/\Im,-}\bm{Y}_{\Re/\Im,-}^\top \big|\big|^L \Big]\\
	& \leq \frac{1}{\varepsilon_-^{2L}} \E\Big[ \tr\Big( \big( \bm{Y}_{\Re/\Im,-}\bm{Y}_{\Re/\Im,-}^\top \big)^L \Big) \Big] \overset{\text{Thm. \ref{Thm_TraceMomentBound}}}{\leq} \frac{1}{\varepsilon_-^{2L}} \kappa_-^L (2L-1)!! \, (d+2m+1)^{L+1}\\
	& = 2^L (2L-1)!! \, \bigg(\frac{d+2m+1}{\varepsilon_-^2} \bigg(\sum\limits_{k=K+1}^\infty ||\Psi_k||\bigg)^2\bigg)^L (d+2m+1)\\
	& \overset{\text{(\ref{Eq_Assumption_Asymptotics})}}{\leq} 8^L \cK_1^{L} (2L-1)!! \, \bigg(\frac{n^{\alpha}}{\varepsilon_-^2} \bigg(\sum\limits_{k=K+1}^\infty ||\Psi_k||\bigg)^2\bigg)^L 4\cK_1 n^{\alpha} \ .
\end{align*}
Analogously, one may apply Theorem \ref{Thm_TraceMomentBound} to $\bm{Y}_{\Re/\Im,+}$ with $M=2m+1$ and $\kappa_+ = 4 \bigg(\sum\limits_{k=0}^\infty ||\Psi_k||\bigg)^2$ to get
\begin{align*}
	& \bP\Big( \big|\big| \bm{Y}_{\Re/\Im,-} \big|\big| \geq \varepsilon_+ \Big) \leq 4^L (2L-1)!! \, \bigg(\frac{d+2m+1}{\varepsilon_+^2} \bigg(\sum\limits_{k=0}^\infty ||\Psi_k||\bigg)^2\bigg)^L (d+2m+1)\\
	& \overset{\text{(\ref{Eq_Assumption_Asymptotics})}}{\leq} 16^L \cK_1^{L} (2L-1)!! \, \bigg(\frac{n^{\alpha}}{\varepsilon_+^2} \bigg(\sum\limits_{k=0}^\infty ||\Psi_k||\bigg)^2\bigg)^L 4\cK_1 n^{\alpha}
\end{align*}
As we are still free to choose $\varepsilon_-,\varepsilon_+>0$, we set
\begin{align*}
	& \varepsilon_-^2 = n^{\delta + \alpha} \Big(\sum\limits_{k=K+1}^\infty ||\Psi_k||\Big)^2 \ \ \text{ and } \ \ \varepsilon_+^2 = n^{\delta + \alpha} \Big(\sum\limits_{k=0}^\infty ||\Psi_k||\Big)^2 \ ,
\end{align*}
which yields
\begin{align*}
	& \bP\big( ||Z_2 Z_2^* - \tilde{Z}_2 \tilde{Z}_2^*|| \geq \varepsilon_{\Psi}(K) \, n^{\delta + \alpha} \big)\\
	& = \bP\big( ||Z_2 Z_2^* - \tilde{Z}_2 \tilde{Z}_2^*|| \geq 4\varepsilon_- \, \varepsilon_+ \big) \overset{\text{(\ref{Eq_ApproxNei1})}}{\leq} \bP\Big( \big( \bm{Y}_{\Re,-} + \bm{Y}_{\Im,-} \big) \big( \bm{Y}_{\Re,+} + \bm{Y}_{\Im,+} \big) \geq 2\varepsilon_- \, 2\varepsilon_+ \Big)\\
	& \leq \bP\big( \bm{Y}_{\Re,-} \geq \varepsilon_- \big) + \bP\big( \bm{Y}_{\Im,-} \geq \varepsilon_- \big) + \bP\big( \bm{Y}_{\Re,+} \geq \varepsilon_+ \big) + \bP\big( \bm{Y}_{\Im,+} \geq \varepsilon_+ \big)\\
	& \leq 16\cK_1 16^L \cK_1^{L} (2L-1)!! \, \big(n^{-\delta}\big)^L n^{\alpha} \ .
\end{align*}
Choosing $L = L(\alpha,\delta,D) \in \N$ large enough that $-\delta L + \alpha < n^{-D}$ forces the right hand side of the previous bound to be smaller than
\begin{align*}
	& \underbrace{16\cK_1 16^L \cK_1^{L} (2L-1)!!}_{=: C'} \, n^{-D} \ .
\end{align*}
By combining this with (\ref{Eq_dBL_bound1}) and (\ref{Eq_dBL_bound2}) we have shown
\begin{align*}
	& \bP\Big( d_{\operatorname{BL}}\big( \hat{\mu}_{S(\frac{2\pi r}{n})}, \hat{\mu}_{\tilde{S}(\frac{2\pi r}{n})} \big) > \frac{48K}{d} + \varepsilon_{\Psi}(K) \, n^{\delta + \alpha} \Big)\\
	& \leq \overbrace{\bP\Big( d_{\operatorname{BL}}\big( \hat{\mu}_{S( \frac{2\pi r}{n})}, \hat{\mu}_{Z_2 Z_2^*} \big) > \frac{24K}{d} \Big)}^{=0} + \overbrace{\bP\Big( d_{\operatorname{BL}}\big( \hat{\mu}_{\tilde{S}( \frac{2\pi r}{n})}, \hat{\mu}_{\tilde{Z}_2 \tilde{Z}_2^*} \big) > \frac{24K}{d} \Big)}^{=0}\\
	& \hspace{1cm} + \bP\big( ||Z_2 Z_2^* - \tilde{Z}_2 \tilde{Z}_2^*|| \geq \varepsilon_{\Psi}(K) \, n^{\delta + 2\alpha} \big)\\
	& \leq C' \, n^{-D} \ ,
\end{align*}
which proves the lemma. \qed

\subsection{Proof of Lemma \ref{Lemma_ApproxContinuity}}\label{Proof_Lemma_ApproxContinuity}
By definition, $\tilde{S}$ and $\tilde{S}'$ have the forms $\tilde{S}\big( \frac{2\pi r}{n} \big) = \tilde{Z} \tilde{Z}^*$ and $\tilde{S}'\big( \frac{2\pi r}{n} \big) = \tilde{Z}' (\tilde{Z}')^*$ for
\begin{align*}
	& \tilde{Z} = \frac{1}{\sqrt{2m+1}} \tilde{\bm{X}} V D_r^{\frac{1}{2}} \ \ \text{ and } \ \ \tilde{Z}' = \frac{1}{\sqrt{2m+1}} \tilde{\bm{X}}' V D_r^{\frac{1}{2}} \ .
\end{align*}
In analogy to (\ref{Eq_ApproxNei1}) we bound
\begin{align}\label{Eq_ApproxCont1}
	& ||\tilde{Z} \tilde{Z}^* - \tilde{Z}' (\tilde{Z}')^*|| \nonumber\\
	& \leq \big(||\underbrace{\Re(\tilde{Z} - \tilde{Z}')}_{=: \bm{Y}'_{\Re,-}}|| + ||\underbrace{\Im(\tilde{Z} - \tilde{Z}')}_{=: \bm{Y}'_{\Im,-}}||\big) \, \big(||\underbrace{\Re(\tilde{Z} + \tilde{Z}')}_{=: \bm{Y}'_{\Re,+}}|| + ||\underbrace{\Im(\tilde{Z} + \tilde{Z}')}_{=: \bm{Y}'_{\Im,+}}||\big) \ .
\end{align}
The calculation
\begin{align}\label{Eq_NeigboringProperty}
	& (\tilde{\bm{X}} V)_{\bullet,s} = \sum\limits_{l=1}^n \tilde{\bm{X}}_{\bullet,l} V_{l,s} = \frac{1}{\sqrt{n}} \sum\limits_{l=1}^n \bigg( \sum\limits_{k=0}^\infty \Psi_{k} \eta_{(l-1-k) \mod n} \bigg) e^{-2\pi i \frac{(l-1)(s-1)}{n}} \nonumber\\
	& = \frac{1}{\sqrt{n}} \sum\limits_{k=0}^\infty \Psi_{k} \sum\limits_{l=1}^n \eta_{(l-1-k) \mod n} e^{-2\pi i \frac{(l-1)(l-1)}{n}} = \frac{1}{\sqrt{n}} \sum\limits_{k=0}^\infty \Psi_{k} \sum\limits_{l=0}^{n-1} \eta_{l} e^{-2\pi i \frac{(l+k)(s-1)}{n}} \nonumber\\
	& = \frac{1}{\sqrt{n}} \sum\limits_{k=0}^\infty e^{-2\pi i \frac{k(s-1)}{n}} \Psi_{k} \sum\limits_{l=0}^{n-1} \eta_{l} e^{-2\pi i \frac{l(s-1)}{n}} = G\Big( \frac{2\pi(s-1)}{n} \Big) \big(\bm{\eta} V\big)_{\bullet,s}
\end{align}
shows that the columns of $\tilde{Z} - \tilde{Z}'$ are
\begin{align*}
	& \tilde{Z}_{\bullet,s} - \tilde{Z}'_{\bullet,s} = \frac{\mathbbm{1}_{\rho_n(r,s-1) \leq m}}{\sqrt{2m+1}} \Big( G\Big( \frac{2\pi(s-1)}{n} \Big) - G\Big( \frac{2\pi r}{n} \Big) \Big) \big(\bm{\eta} V\big)_{\bullet,s} \ .
\end{align*}
Define the index set $J_r \coloneq \{s \in \{1,...,n\} \mid \rho_n(r,s-1) \leq m\}$. In the proof of Lemma \ref{Lemma_AlmostWishart} we see
\begin{align*}
	& \E\big[(\bm{\eta} V)_{i_1,s_1} \ol{(\bm{\eta} V)_{i_2,s_2}}\big] \mathbbm{1}_{i_1=i_2, s_1=s_2} \ \ \text{ and } \ \ \E\big[(\bm{\eta} V)_{i_1,l_1} (\bm{\eta} V)_{i_2,l_2}\big] = \mathbbm{1}_{i_1=i_2, s_1-1=n-(s_2-1)} \ ,
\end{align*}
which may be used to calculate the covariance of columns $(\tilde{Z})_{\bullet,s}-(\tilde{Z}')_{\bullet,s}$ as
\begin{align*}
	& \E\big[ \big( (Z_2)_{\bullet,s_1}-(\tilde{Z}_2)_{\bullet,s_1}\big) \big( (Z_2)_{\bullet,s_2}-(\tilde{Z}_2)_{\bullet,s_2}\big)^* \big] \nonumber\\
	& = \frac{\mathbbm{1}_{s_1,s_2 \in J_r}}{2m+1} \Big( G\Big( \frac{2\pi(s_1-1)}{n} \Big) - G\Big( \frac{2\pi r}{n} \Big) \Big) \nonumber\\
	& \hspace{3cm} \times \underbrace{\E\big[ \big(\bm{\eta} V\big)_{\bullet,s_1} \big(\bm{\eta} V\big)_{\bullet,s_2}^* \big]}_{= \mathbbm{1}_{s_1=s_2} \, \operatorname{Id}_d} \Big( G\Big( \frac{2\pi(s_2-1)}{n} \Big) - G\Big( \frac{2\pi r}{n} \Big) \Big)^*
\end{align*}
and elementary bounds yield
\begin{align}\label{Eq_ApproxCont2}
	& \big|\big| \E\big[ (\bm{Y}'_{\Re/\Im,-})_{\bullet,s_1} (\bm{Y}'_{\Re/\Im,-})_{\bullet,s_2}^\top \big] \big|\big| \nonumber\\
	& \leq \frac{\mathbbm{1}_{s_1,s_2 \in J_r}}{2m+1} \Big|\Big| G\Big( \frac{2\pi(s_1-1)}{n} \Big) - G\Big( \frac{2\pi r}{n} \Big) \Big|\Big| \nonumber\\
	& \hspace{2cm} \times \mathbbm{1}_{s_1=s_2 \, \text{ or } \, s_1-1=n-(s_2-1)} \, \Big|\Big| G\Big( \frac{2\pi(s_2-1)}{n} \Big) - G\Big( \frac{2\pi r}{n} \Big) \Big|\Big| \ .
\end{align}
The observation (\ref{Eq_G_Continuity}) and the definition of $J_r$ together turn (\ref{Eq_ApproxCont2}) into
\begin{align*}
	& \big|\big| \E\big[ (\bm{Y}'_{\Re/\Im,-})_{\bullet,s_1} (\bm{Y}'_{\Re/\Im,-})_{\bullet,s_2}^\top \big] \big|\big|\\
	& \leq \frac{\mathbbm{1}_{s_1,s_2 \in J_r}}{2m+1} \Big( \frac{2\pi}{n} \Big)^2 m^2 \bigg( \sum\limits_{k=0}^\infty k ||\Psi_k|| \bigg)^2 \, \mathbbm{1}_{s_1=s_2 \, \text{ or } \, s_1-1=n-(s_2-1)}\\
	& \leq \mathbbm{1}_{s_1,s_2 \in J_r} \, \mathbbm{1}_{s_1=s_2 \, \text{ or } \, s_1-1=n-(s_2-1)} \, 2\pi^2 \frac{m}{n^2} \bigg( \sum\limits_{k=0}^\infty k ||\Psi_k|| \bigg)^2 \ .
\end{align*}
The matrices $\bm{Y}_{\Re/\Im,\pm}$ are technically $(d \times n)$ matrices, but for the sake of Theorem \ref{Thm_TraceMomentBound} are effectively $(d \times (2m+1))$, since only $2m+1$ columns differ from zero.
Application of Theorem \ref{Thm_TraceMomentBound} to the matrices $\bm{Y}'_{\Re/\Im,-}$ with $M=2m+1$ and $\kappa_-' = 4\pi^2 \frac{m}{n^2} \big( \sum\limits_{k=0}^\infty k ||\Psi_k|| \big)^2$ yields
\begin{align}\label{Eq_ApproxCont3}
	& \bP\Big( \big|\big| \bm{Y}'_{\Re/\Im,-} \big|\big| \geq \varepsilon_- \Big) = \bP\Big( \big|\big| \bm{Y}'_{\Re/\Im,-}(\bm{Y}'_{\Re/\Im,-})^\top \big|\big| \geq \varepsilon_-^2 \Big) \nonumber\\
	& \leq \frac{1}{\varepsilon_-^{2L}} \E\Big[ \big|\big| \bm{Y}'_{\Re/\Im,-}(\bm{Y}'_{\Re/\Im,-})^\top \big|\big|^L \Big] \leq \frac{1}{\varepsilon_-^{2L}} \E\Big[ \tr\Big( \big( \bm{Y}'_{\Re/\Im,-}(\bm{Y}'_{\Re/\Im,-})^\top \big)^L \Big) \Big] \nonumber\\
	& \hspace{-0.45cm} \overset{\text{Thm. \ref{Thm_TraceMomentBound}}}{\leq} \frac{1}{\varepsilon_-^{2L}} (\kappa_-')^L (2L-1)!! \, (d+2m+1)^{L+1} \nonumber\\
	& = (4\pi)^L (2L-1)!! \, \bigg(\frac{d+2m+1}{\varepsilon_-^2} \frac{m}{n^2} \bigg(2\sum\limits_{k=0}^\infty k ||\Psi_k||\bigg)^2\bigg)^L (d+2m+1) \nonumber\\
	& \hspace{-0.15cm} \overset{\text{(\ref{Eq_Assumption_Asymptotics})}}{\leq} (16\pi \cK_1^2)^L (2L-1)!! \, \bigg(\frac{n^{2\alpha}}{\varepsilon_-^2 \, n^2} \bigg(2\sum\limits_{k=0}^\infty k ||\Psi_k||\bigg)^2\bigg)^L 4\cK_1 n^{\alpha} \ .
\end{align}
The same steps for $\bm{Y}'_{\Re/\Im,+}$ are simpler, since $||G(\tau)|| \leq \sum\limits_{k=0}^\infty ||\Psi_k||$ analogously to (\ref{Eq_ApproxCont2}) gives
\begin{align}\label{Eq_ApproxCont4}
	& \big|\big| \E\big[ (\bm{Y}'_{\Re/\Im,+})_{\bullet,s_1} (\bm{Y}'_{\Re/\Im,+})_{\bullet,s_2}^\top \big] \big|\big| \nonumber\\
	& \leq \frac{\mathbbm{1}_{s_1,s_2 \in J_r}}{2m+1} \bigg( 2\sum\limits_{k=0}^\infty ||\Psi_k|| \bigg)^2 \, \mathbbm{1}_{s_1=s_2 \, \text{ or } \, s_1-1=n-(s_2-1)}
\end{align}
and we can apply Theorem \ref{Thm_TraceMomentBound} with $M=2m+1$ and $\kappa_+' = \frac{1}{m} \bigg( 2\sum\limits_{k=0}^\infty ||\Psi_k|| \bigg)^2$ to get
\begin{align}\label{Eq_ApproxCont5}
	& \bP\Big( \big|\big| \bm{Y}'_{\Re/\Im,+} \big|\big| \geq \varepsilon_+ \Big) \leq (2L-1)!! \, \bigg(\frac{d+2m+1}{\varepsilon_+^2} \frac{1}{m} \bigg(2\sum\limits_{k=0}^\infty ||\Psi_k||\bigg)^2\bigg)^L (d+2m+1) \nonumber\\
	& \overset{\text{(\ref{Eq_Assumption_Asymptotics})}}{\leq} (4\cK_1^2)^L (2L-1)!! \, \bigg(\frac{1}{\varepsilon_+^2} \bigg(2\sum\limits_{k=0}^\infty ||\Psi_k||\bigg)^2\bigg)^L 4\cK_1 n^{\alpha} \ .
\end{align}
Choosing
\begin{align*}
	& \varepsilon_-^2 = \frac{n^{\delta + 2\alpha}}{n^2} \bigg(2\sum\limits_{k=0}^\infty k ||\Psi_k||\bigg)^2 \ \ \text{ and } \ \ \varepsilon_+^2 = n^{\delta} \bigg(2\sum\limits_{k=0}^\infty ||\Psi_k||\bigg)^2
\end{align*}
we see
\begin{align*}
	& \bP\Big( ||\tilde{Z} \tilde{Z}^* - \tilde{Z}' (\tilde{Z}')^*|| \geq \varepsilon_{\Psi} \, n^{\delta + \alpha-1} \Big)\\
	& = \bP\Big( ||\tilde{Z} \tilde{Z}^* - \tilde{Z}' (\tilde{Z}')^*|| \geq 4\varepsilon_- \varepsilon_+ \Big) \overset{\text{(\ref{Eq_ApproxCont1})}}{\leq} \bP\Big( \big( \bm{Y}'_{\Re,-} + \bm{Y}'_{\Im,-} \big) \big( \bm{Y}'_{\Re,+} + \bm{Y}'_{\Im,+} \big) \geq 2\varepsilon_- \, 2\varepsilon_+ \Big)\\
	& \leq \bP\big( \bm{Y}'_{\Re,-} \geq \varepsilon_- \big) + \bP\big( \bm{Y}'_{\Im,-} \geq \varepsilon_- \big) + \bP\big( \bm{Y}'_{\Re,+} \geq \varepsilon_+ \big) + \bP\big( \bm{Y}'_{\Im,+} \geq \varepsilon_+ \big)\\
	& \leq 16\cK_1(4\cK_1^2)^L (2L-1)!! \, \big(n^{-\delta}\big)^L n^{\alpha} \ .
\end{align*}
For $L=L(\alpha,\delta,D) \in \N$ large enough that $-\delta L + \alpha < n^{-D}$ the right hand side of the previous bound is smaller than
\begin{align*}
	& \underbrace{16\cK_1(4\cK_1^2)^L (2L-1)!!}_{=: C''} \, n^{-D} \ ,
\end{align*}
which proves the lemma. \qed

\subsection{Proof of Lemma \ref{Lemma_LocalLaw}}\label{Proof_Lemma_LocalLaw}
For now assume that there exists a constant $\varepsilon > 0$ such that the singular value bound 
\begin{align}\label{Eq_TempAssumption_LowerBound}
	& \varepsilon \leq \min\limits_{\theta \in [0,2\pi)} \sigma_{\min}(G(\theta))
\end{align}
holds uniformly in $n \in \N$. After this proof, said assumption may be removed by following Section 11 of \cite{Knowles} verbatim.
\begin{itemize}
	\item[a)]
	Let $U_1\Sigma U_2$ be the singular value decomposition of $G(\frac{2\pi r}{n})$. We may without loss of generality replace the definition of $\hat{\nu}_n(\frac{2\pi r}{n})$ with
	\begin{align*}
		& \hat{\nu}_n\Big(\frac{2\pi r}{n}\Big) \coloneq \hat{\mu}_{\Sigma \bm{Z} \bm{Z}^* \Sigma^\top} \ ,
	\end{align*}
	where $\bm{Z}$ is a $(d \times 2m+1)$ random matrix with i.i.d. complex standard normal entries. This is due to the fact that $U_1$ does not affect the ESD of $U_1 \Sigma \bm{Z} \bm{Z}^* \Sigma^\top U_1^*$ and the fact that $\bm{Z} \bm{Z}^*$ has the same distribution as $U_2 \bm{W} U_2^*$. The entries of the matrix
	\begin{align*}
		& \mathbb{X} \coloneq \frac{1}{\sqrt{2m+1}} \Sigma \bm{Z}
	\end{align*}
	are independent, centered and have covariance structure $\mathbb{S}_{i,k} \coloneq \E[|\mathbb{X}_{i,k}|^2] = \frac{\Sigma_{i,i}^2}{2m+1}$. We briefly check assumptions (A)-(D) of \cite{GramErdos} for $\mathbb{X}$.
	\begin{itemize}
		\item[A)]
		For any $s_* \geq 2\cK_2^2\max(\cK_1^2, 1)$ the assumption $\bS_{i,k} \leq \frac{s_*}{d+n}$ holds, since
		\begin{align*}
			& \mathbb{S}_{i,k} = \frac{\Sigma_{i,i}^2}{2m+1} \leq \frac{||\Sigma^2||}{2m+1} =  \frac{||G( \frac{2\pi r}{n} )||^2}{2m+1} \overset{\text{(\ref{Eq_Def_G})}}{\leq} \frac{\big(\sum\limits_{k=0}^\infty ||\Psi_k||\big)^2}{2m+1}\\
			& \overset{\text{(\ref{Eq_Assumption_LongRandeDependence_cK2})}}{\leq} \frac{\cK_2^2}{\frac{1}{2}(3m+1)} \overset{\text{(\ref{Eq_Assumption_Asymptotics})}}{\leq} \frac{2\cK_2^2}{\frac{d}{\cK_1^2} + 2m+1} \leq \frac{2\cK_2^2\max(\cK_1^2, 1)}{d + 2m+1} \ .
		\end{align*}
		
		\item[B)]
		By construction of $\Sigma$ and temporary assumption (\ref{Eq_TempAssumption_LowerBound}), it holds that $\Sigma_{i,i} \geq \sqrt{\varepsilon}$ for all $i \leq d$. We calculate
		\begin{align*}
			& (\bS \bS^\top)_{i,j} = \sum\limits_{k=1}^{2m+1} \frac{\Sigma_{i,i}^2}{2m+1} \frac{\Sigma_{j,j}^2}{2m+1} \overset{\text{(\ref{Eq_TempAssumption_LowerBound})}}{\geq} \frac{\varepsilon^2}{2m+1} \geq \frac{\varepsilon^2}{d+2m+1}
		\end{align*}
		and
		\begin{align*}
			& (\bS^\top \bS)_{k,l} = \sum\limits_{i=1}^d \frac{\Sigma_{i,i}^4}{(2m+1)^2} \overset{\text{(\ref{Eq_TempAssumption_LowerBound})}}{\geq} \frac{\varepsilon^2 \, d}{(2m+1)^2}\\
			& \geq \frac{\varepsilon^2 \, d/4m}{2m+1} \overset{\text{(\ref{Eq_Assumption_Asymptotics})}}{\geq} \frac{\varepsilon^2/4\cK_1^2}{d + 2m+1} \geq \frac{\frac{\varepsilon^2}{4\cK_1^2}}{d + 2m+1} \ ,
		\end{align*}
		which shows that assumption (B) from \cite{GramErdos} holds with $L_1=1=L_2$ and $\psi_1 = \varepsilon^2$ as well as $\psi_2 = \frac{\varepsilon^2}{4\cK_1^2}$.
		
		\item[C)]
		Since complex standard normal random variables $Z$ satisfy $\E[|Z|^m] \leq \sqrt{m!}$, the trivial calculation
		\begin{align*}
			& \E\big[ |\mathbb{X}_{i,k}|^m \big] = \frac{\Sigma_{i,i}^m}{\sqrt{2m+1}} \E\big[ |\bm{Z}_{i,k}|^m \big] = \bS_{i,k}^{\frac{m}{2}} \E\big[ |\bm{Z}_{i,k}|^m \big]
		\end{align*}
		shows assumption (D) from \cite{GramErdos} with $\mu_m=\sqrt{m!}$.
		
		\item[D)]
		Assumption (\ref{Eq_Assumption_Asymptotics}) directly yields
		\begin{align*}
			& \frac{1}{3\cK_1^2} \leq \frac{d}{2m+1} \leq \frac{\cK_1^2}{2} \ ,
		\end{align*}
		so assumption (D) from \cite{GramErdos} also holds.
	\end{itemize}
	By Theorem 2.2 of \cite{GramErdos} (see (2.6b) with $w=(1,...,1)^\top$), there for any $\tau,\delta,D>0$ exists a constant $C_{\delta,D}>0$, which in addition to $\delta,D$ only depends on $\cK_1,\cK_2,\tau$ and $\varepsilon$, such that
	\begin{align}\label{Eq_GramErdösResult1}
		& \bP\Big( \exists z \in \C^+ , \, \tau \leq |z| \leq \tau^{-1} , \, \dist(z,\operatorname{supp}(\nu_n) \geq \tau) : \nonumber\\
		& \hspace{3cm} \big| \cs_{\hat{\nu}_n(\frac{2\pi r}{n})}(z) - \cs_{\nu_n(\frac{2\pi r}{n})}(z) \big| \geq \frac{d^\delta}{d} \Big) \leq \frac{C_{\delta,D}}{d^D}
	\end{align}
	holds for all $n \in \N$. Note that all $z \in \bm{S}(\tau)$ satisfy the prerequisites from the above bound and that $C_{\delta,D}$ does not depend on $r \in \{0,...,n-1\}$, which allows us to follow
	\begin{align*}
		& \bP\Big( \exists r < n \, \exists z \in \bm{S}(\tau) : \  \big| \cs_{\hat{\nu}_n(\frac{2\pi r}{n})}(z) - \cs_{\nu_n(\frac{2\pi r}{n})}(z) \big| \geq \frac{d^\delta}{d} \Big) \leq \frac{C_{\delta,D}}{d^D}n \ .
	\end{align*}
	Lastly, with the simple bound
	\begin{align*}
		& \frac{n}{d^D} \overset{\text{(\ref{Eq_Assumption_Asymptotics})}}{\leq} \cK_1^D n^{1-\alpha D}
	\end{align*}
	we by choosing $C(\tau,\delta,\tilde{D}) \geq \cK_1^{\frac{\tilde{D}+1}{\alpha}} C_{\delta,\frac{\tilde{D}+1}{\alpha}}$ get
	\begin{align*}
		& \bP\Big( \exists r < n \, \exists z \in \bm{S}(\tau) : \  \big| \cs_{\hat{\nu}_n(\frac{2\pi r}{n})}(z) - \cs_{\nu_n(\frac{2\pi r}{n})}(z) \big| \geq \underbrace{d^{\delta-1}}_{= a_n} \Big) \leq \frac{C(\tau,\delta,D)}{n^D} \ .
	\end{align*}
	
	\item[b)]
	For $\theta \in \{0,\pi\}$ let $U_1\Sigma U_2$ be the singular value decomposition of $G(\theta)$. We may without loss of generality replace the definition of $\hat{\nu}_n(\frac{2\pi r}{n})$ with
	\begin{align*}
		& \hat{\nu}_n(\theta) \coloneq \hat{\mu}_{\Sigma \bm{Z} \bm{Z}^\top \Sigma^\top} \ ,
	\end{align*}
	where $\bm{Z}$ is a $(d \times 2m)$ random matrix with i.i.d. real standard normal entries. This is due to the fact that $U$ does not affect the ESD of $U_1 \Sigma \bm{Z} \bm{Z}^\top \Sigma^\top U_1^\top$ and the fact that $\bm{Z} \bm{Z}^\top$ has the same distribution as $U_2 \bm{W} U_2^\top$. We have used the fact that $G(\theta)$ has only real entries for $\theta \in \{0,\pi\}$ and thus, $U_1,U_2$ must be real orthogonal matrices instead of only unitary.\\
	\\
	We can now follow the proof of (a) up to (\ref{Eq_GramErdösResult1}) verbatim. The new choice of $\mu_m$ is $\sqrt{(2m-1)!!}$ and we analogously get a constant $C'_{\delta,D} > 0$ also depending on $\cK_1,\cK_2,\tau$ and $\varepsilon$ such that
	\begin{align*}
		& \bP\Big( \exists z \in \bm{S}(\tau) : \ \big| \cs_{\hat{\nu}_n(\theta)}(z) - \cs_{\nu_n(\theta)}(z) \big| \geq \frac{d^\delta}{d} \Big) \leq \frac{C'_{\delta,D}}{d^D}
	\end{align*}
	holds for both $\theta \in \{0,\pi\}$. By choosing $C(\tau,\delta,\tilde{D}) \geq \cK_1^{\frac{\tilde{D}}{\alpha}} C'_{\delta,\frac{\tilde{D}}{\alpha}}$, it analogously follows that
	\begin{align*}
		& \bP\Big( \exists z \in \bm{S}(\tau) : \ \big| \cs_{\hat{\nu}_n(\theta)}(z) - \cs_{\nu_n(\theta)}(z) \big| \geq \underbrace{d^{\delta-1}}_{= a_n} \Big) \leq \frac{C(\tau,\delta,\tilde{D})}{n^D} \ .
	\end{align*}
\end{itemize}
It remains to lose the temporary assumption \ref{Eq_TempAssumption_LowerBound}. For each $\varepsilon > 0$, define $G_\varepsilon(\theta) \coloneq U(\Sigma + \varepsilon\operatorname{Id}_d)V$, where $U\Sigma V$ is the singular value decomposition of $G(\theta)$. Let $C_\varepsilon(\tau,\delta,D)$ be the constant for which (\ref{Eq_OuterLaw_complex}) and (\ref{Eq_OuterLaw_real}) hold with $G_\varepsilon(\theta)$ instead of $G(\theta)$ and $G_\varepsilon(\theta)G_\varepsilon(\theta)^*$ instead of $F(\theta)$, i.e. we have shown
\begin{align}\label{Eq_OuterLaw_complex_epsilon}
	& \bP\Big( \exists r < n \, \exists z \in \bm{S}(\tau) : \  \big| \cs_{\hat{\nu}_{n,\varepsilon}(\frac{2\pi r}{n})}(z) - \cs_{\nu_{n,\varepsilon}(\frac{2\pi r}{n})}(z) \big| \geq \frac{d^\delta}{d} \Big) \leq \frac{C_\varepsilon(\tau,\delta,D)}{n^D}
\end{align}
and
\begin{align}\label{Eq_OuterLaw_real_epsilon}
	& \forall \theta \in \{0,\pi\} : \ \bP\Big( \exists z \in \bm{S}(\tau) : \ \big| \cs_{\hat{\nu}_{n,\varepsilon}(\theta)}(z) - \cs_{\nu_{n,\varepsilon}(\theta)}(z) \big| \geq \frac{d^\delta}{d} \Big) \leq \frac{C_\varepsilon(\tau,\delta,\tilde{D})}{n^D} \ ,
\end{align}
where $\hat{\nu}_{n,\varepsilon}(\theta) = \hat{\mu}(\frac{1}{2m+1}G_\varepsilon(\theta)\bm{W}G_\varepsilon(\theta)^*)$ and $\nu_{n,\varepsilon}(\theta)$ denotes the probability measure defined by Lemma \ref{Lemma_MPEquation} from $H_{n,\varepsilon}(\theta) = \hat{\mu}_{G_\varepsilon(\theta)G_\varepsilon(\theta)^*}$ and $c_n$. We can bound the operator norm perturbation
\begin{align*}
	& \Big|\Big| \frac{1}{2m+1}G(\theta)\bm{W}G(\theta)^* - \frac{1}{2m+1}G_\varepsilon(\theta)\bm{W}G_\varepsilon(\theta)^* \Big|\Big|\\
	& = \frac{1}{2m+1} \big|\big| \Sigma V\bm{W}V^* \Sigma - (\Sigma+\varepsilon\operatorname{Id}_d) V\bm{W}V^* (\Sigma+\varepsilon\operatorname{Id}_d) \big|\big|\\
	& \leq \frac{2\varepsilon||\Sigma||}{2m+1} \big|\big| V\bm{W}V^* \big|\big| + \frac{\varepsilon^2}{2m+1} \big|\big| V\bm{W}V^* \big|\big| \overset{\text{(\ref{Eq_Assumption_LongRandeDependence_cK2})}}{\leq} \varepsilon\frac{2\cK_2 + \varepsilon}{2m+1} \big|\big| \bm{W} \big|\big| \ .
\end{align*}
The operator norms of isotropic (complex or real) Wishart matrices $\bm{W}$ are very well understood and for example Theorem 2 of \cite{LedouxRider} may be used to show the existence of a constant
$C''(D)>0$ only dependent on $D$ and $\cK_1$ such that
\begin{align}\label{Eq_WishartNormBound_old}
	& \bP\Big( \frac{\big|\big| \bm{W} \big|\big|}{2m+1} \geq C''(D) \Big) \leq \frac{C''(D)}{n^{D}}
\end{align}
holds for all $n \in \N$.
By the simple bound
\begin{align*}
	& \big| \cs_{\hat{\mu}(A)}(z) - \cs_{\hat{\mu}(B)}(z) \big| \leq \frac{1}{d} \sum\limits_{j=1}^d \Big| \frac{1}{\lambda_j(A)-z} - \frac{1}{\lambda_j(B)-z} \Big|\\
	& = \frac{1}{d} \sum\limits_{j=1}^d \frac{|\lambda_j(A) - \lambda_j(B)|}{|\lambda_j(A)-z| \, |\lambda_j(B)-z|} \leq \frac{||A-B||}{\Im(z)^2}
\end{align*}
we follow
\begin{align}\label{Eq_TempAssumption_Resolution1}
	& \bP\Big( \forall z \in \bm{S}(\tau) : \ \big| \cs_{\hat{\mu}(\frac{1}{2m+1}G(\theta)\bm{W}G(\theta)^*)}(z) - \cs_{\hat{\mu}(\frac{1}{2m+1}G_\varepsilon(\theta)\bm{W}G_\varepsilon(\theta)^*)}(z) \big|\\
	& \hspace{6cm} \geq \frac{\varepsilon}{\tau^2} (2\cK_2+\varepsilon) C''(D) \Big) \leq \frac{C''(D)}{n^{D}}
\end{align}
for all $n \in \N$. Since $\nu_{n(,\varepsilon)}(\theta)$ is also the limiting spectral distribution of a meta-model with dimension quotient $\frac{\tilde{d}}{\tilde{n}}$ converging to $c_n$ and population spectral distribution converging to $H_{n(,\varepsilon)}(\theta)$, one can from the above bound also follow
\begin{align}\label{Eq_TempAssumption_Resolution2}
	& \big| \cs_{\nu_n(\theta)}(z) - \cs_{\nu_{n,\varepsilon}(\theta)}(z) \big| \leq \frac{\varepsilon}{\tau^2} (2\cK_2+\varepsilon) C''(D)
\end{align}
for all $z \in \bm{S}(\tau)$, $\theta \in [0,2\pi)$ and $n \in \N$. Applying (\ref{Eq_TempAssumption_Resolution1}) and (\ref{Eq_TempAssumption_Resolution2}) to (\ref{Eq_OuterLaw_complex_epsilon}) yields
\begin{align*}
	& \bP\Big( \exists r < n \, \exists z \in \bm{S}(\tau) : \  \big| \cs_{\hat{\nu}_{n}(\frac{2\pi r}{n})}(z) - \cs_{\nu_{n}(\frac{2\pi r}{n})}(z) \big|\\
	& \hspace{2cm} \geq \frac{d^\delta}{d} + \frac{2\varepsilon}{\tau^2} (2\cK_2+\varepsilon) C''(D) \Big) \leq \frac{C_\varepsilon(\tau,\delta,D)+C''(D)}{n^D} \ .
\end{align*}
Let $(\varepsilon_n)_{n \in \N} \subset (0,1)$ be a sequence converging slowly enough that $C_{\varepsilon_n}(\tau,\delta,D) \leq n C_{\varepsilon_1}(\tau,\delta,D)$, then we have shown (\ref{Eq_OuterLaw_complex}) for $a_n = \frac{d^\delta}{d} + \frac{2\varepsilon_n}{\tau^2} (2\cK_2+\varepsilon_n) C''(D)$ and $C(\tau,\delta,D) = C_{\varepsilon_1}(\tau,\delta,D+1) + C''(D+1)$. The proof of (\ref{Eq_OuterLaw_real}) is analogous. \qed

\subsection{Proof of Lemma \ref{Lemma_Lindeberg}}\label{Proof_Lemma_Lindeberg}
By simple telescope sum one sees
\begin{align*}
	& \E\big[ f(X_1,...,X_n) - f(Y_1,...,Y_n) \big]\\
	& = \sum\limits_{j=1}^n \E\big[ f(\underbrace{X_1,...,X_j,Y_{j+1},...,Y_n}_{=: \bm{Z}_{j}}) - f(\underbrace{X_1,...,X_{j-1},Y_{j},...,Y_n}_{=\bm{Z}_{j-1}}) \big] \ .
\end{align*}
For
\begin{align*}
	& \bm{Z}^0_j \coloneq (X_1,...,X_{j-1},0,Y_{j+1},...,Y_n)
\end{align*}
a third order Taylor series with integral remainder gives
\begin{align*}
	& f(\bm{Z}_j) = f(\bm{Z}^0_j) + X_j \partial_j f(\bm{Z}^0_j) + \frac{X_j^2}{2} \partial_j^2 f(\bm{Z}^0_j) + \overbrace{\frac{1}{2}\int_0^{X_j} \tau^2 \partial_j^3 f(\bm{Z}^0_j + \tau \mathrm{u}_j^{(N)}) \, d\tau}^{= \frac{X_j^3}{2}\int_0^{1} \tau^2 \partial_j^3 f(\bm{Z}^0_j + \tau X_j \mathrm{u}_j^{(N)}) \, d\tau =: R^X_j}\\
	& f(\bm{Z}_{j-1}) =  f(\bm{Z}^0_j) + Y_j \partial_j f(\bm{Z}^0_j) + \frac{Y_j^2}{2} \partial_j^2 f(\bm{Z}^0_j) + \underbrace{\frac{1}{2}\int_0^{Y_j} \tau^2 \partial_j^3 f(\bm{Z}^0_j + \tau \mathrm{u}_j^{(N)}) \, d\tau}_{= \frac{Y_j^3}{2}\int_0^{1} \tau^2 \partial_j^3 f(\bm{Z}^0_j + \tau Y_j \mathrm{u}_j^{(N)}) \, d\tau =: R^Y_{j}} \ .
\end{align*}
The independence of $X_1,...,X_n,Y_1,...,Y_n$ may then be applied for
\begin{align*}
	& \E\big[ f(\bm{Z}_j) - f(\bm{Z}_{j-1}) \big]\\
	& = \E\Big[ (X_j-Y_j) \partial_j f(\bm{Z}^0_j) \Big] + \frac{1}{2}\E\big[ (X_j^2-Y_j^2) \partial_j^2 f(\bm{Z}^0_j) \big] + \E\big[ R^X_j-R^Y_j \big]\\
	& = \underbrace{\E[X_j-Y_j]}_{=0} \E\Big[ \partial_j f(\bm{Z}^0_j) \Big] + \frac{1}{2} \underbrace{\E[X_j^2-Y_j^2]}_{=0} \E\big[ \partial_j^2 f(\bm{Z}^0_j) \big] + \E\big[ R^X_j-R^Y_j \big]
\end{align*}
and Fubini and Cauchy-Schwarz yield
\begin{align*}
	& \Big| \E\big[ f(\bm{Z}_j) - f(\bm{Z}_{j-1}) \big] \Big| \leq \big| \E[R^X_j] \big| + \big| \E[R^Y_j] \big|\\
	& = \bigg| \frac{1}{2}\int_0^{1} \tau^2 \E\big[ X_j^3 \partial_j^3 f(\bm{Z}^0_j + \tau X_j \mathrm{u}_j^{(N)}) \big] \, d\tau \bigg| + \bigg| \frac{1}{2}\int_0^{1} \tau^2 \E\big[ Y_j^3 \partial_j^3 f(\bm{Z}^0_j + \tau Y_j \mathrm{u}_j^{(N)}) \big] \, d\tau \bigg|\\
	& \leq \frac{1}{2} \max\limits_{\tau \in [0,1]} \E[X_j^6]^{\frac{1}{2}} \E\big[ |\partial_j^3 f(\bm{Z}^0_j + \tau X_j \mathrm{u}_j^{(N)})|^2 \big]^\frac{1}{2} + \frac{1}{2} \max\limits_{\tau \in [0,1]} \E[X_j^6]^{\frac{1}{2}} \E\big[ |\partial_j^3 f(\bm{Z}^0_j + \tau Y_j \mathrm{u}_j^{(N)})|^2 \big]^\frac{1}{2}\\
	& \leq \frac{\sqrt{\cK}}{2} \big(M^X_j + M^Y_j\big) \ . \qed
\end{align*}

\subsection{Proof of Lemma \ref{Lemma_BoundingStieltjesDerivatives}}\label{Proof_Lemma_BoundingStieltjesDerivatives}
With the notation $A(x) = B(x) B^*(x)$ it is clear that
\begin{align*}
	& \frac{\partial }{\partial x_r} A(x) = \frac{\partial B(x)}{\partial x_r} B^*(x) + B(x) \Big( \frac{\partial B(x)}{\partial x_r} \Big)^*
\end{align*}
and
\begin{align*}
	& \frac{\partial^2}{\partial x_{r_1} \, \partial x_{r_2}} A(x) = \frac{\partial B(x)}{\partial x_{r_1}} \Big( \frac{\partial B(x)}{\partial x_{r_2}} \Big)^* + \frac{\partial B(x)}{\partial x_{r_2}} \Big( \frac{\partial B(x)}{\partial x_{r_1}} \Big)^* \ .
\end{align*}
The bounds
\begin{align}
	& \Big|\Big| \frac{\partial}{\partial x_r} A(x) \Big|\Big| \leq 2 \Big|\Big| \frac{\partial B(x)}{\partial x_r} \Big|\Big| \, \big|\big| B(x) \big|\big| \leq 2||B(x)|| \kappa \label{Eq_SpectralDerivativeBound1}\\
	& \Big|\Big| \frac{\partial^2}{\partial x_{r_1} \, dx_{r_2}} A(x) \Big|\Big| \leq 2 \Big|\Big| \frac{\partial B(x)}{\partial x_{r_1}} \Big|\Big| \, \Big|\Big| \frac{\partial B(x)}{\partial x_{r_2}} \Big|\Big| \leq 2\kappa^2 \label{Eq_SpectralDerivativeBound2}
\end{align}
immediately follow and the fact that $A(x)$ is Hermitian gives
\begin{align}\label{Eq_SpectralResolventBound}
	& ||(A(x)-z\operatorname{Id})^{-1}|| \leq \frac{1}{\Im(z)}
\end{align}
for all $z \in \C$ with $\Im(z) > 0$.
\\
\\
The following Lemma may be applied to $\cs_{B(x)B^*(x)}(z) = \frac{1}{d} \tr\big( (A(x) - z \operatorname{Id})^{-1} \big)$.

\begin{lemma}[Resolvent derivatives]\thlabel{Lemma_ResolventDerivatives}\
	\\
	For any $(d \times d)$ matrix $A$ and $z \in \C$ such that $(A-z\operatorname{Id})^{-1}$ exists we have
	\begin{align}\label{Eq_A_derivative0}
		& \frac{d}{dA_{i,j}} (A-z\operatorname{Id})^{-1} = - (A-z\operatorname{Id})^{-1} \mathrm{u}_i^{(d)} (\mathrm{u}_j^{(d)})^\top (A-z\operatorname{Id})^{-1} \ .
	\end{align}
	Further, for a smooth map $A : \R^n \rightarrow \C^{d \times d}$, we can by chain rule calculate the partial derivatives of the first three orders to be
	\begin{align}\label{Eq_A_derivative1}
		& \frac{\partial}{\partial x_{k_1}} (A(x)-z\operatorname{Id})^{-1} = - (A-z\operatorname{Id})^{-1} \frac{\partial A(x)}{\partial x_{k_1}} (A-z\operatorname{Id})^{-1} \ ,
	\end{align}
	\begin{align}\label{Eq_A_derivative2}
		& \frac{\partial^2}{\partial x_{k_1} \, \partial x_{k_2}} (A(x)-z\operatorname{Id})^{-1} = \sum\limits_{\sigma \in S_2} (A-z\operatorname{Id})^{-1} \frac{\partial A(x)}{\partial x_{k_{\sigma(1)}}} (A-z\operatorname{Id})^{-1} \frac{\partial A(x)}{\partial x_{k_{\sigma(2)}}} (A-z\operatorname{Id})^{-1} \nonumber\\
		& \hspace{4cm} - (A-z\operatorname{Id})^{-1} \frac{\partial^2A(x)}{\partial x_{k_1} \, \partial x_{k_2}} (A-z\operatorname{Id})^{-1}
	\end{align}
	and
	\begin{align}\label{Eq_A_derivative3}
		& \frac{\partial^3}{\partial x_{k_1} \, \partial x_{k_2} \, \partial x_{k_3}} (A(x)-z\operatorname{Id})^{-1} \nonumber\\
		& = - \sum\limits_{\sigma \in S_3} (A-z\operatorname{Id})^{-1} \frac{\partial A(x)}{\partial x_{k_{\sigma(1)}}} (A-z\operatorname{Id})^{-1} \frac{\partial A(x)}{\partial x_{k_{\sigma(2)}}} (A-z\operatorname{Id})^{-1} \frac{\partial A(x)}{\partial x_{k_{\sigma(3)}}} (A-z\operatorname{Id})^{-1} \nonumber\\
		& \hspace{1cm} + \sum\limits_{\substack{\alpha \sqcup \beta = \{1,2,3\} \\ \alpha,\beta \neq \emptyset}} (A-z\operatorname{Id})^{-1} \frac{\partial^{\#\alpha}A(x)}{\partial x_{k_\alpha}} (A-z\operatorname{Id})^{-1} \frac{\partial^{\#\beta}A(x)}{\partial x_{k_\beta}} (A-z\operatorname{Id})^{-1} \nonumber\\
		& \hspace{1cm} - (A-z\operatorname{Id})^{-1} \frac{\partial ^3A(x)}{\partial x_{k_1} \, \partial x_{k_2} \, \partial x_{k_3}} (A-z\operatorname{Id})^{-1} \ ,
	\end{align}
	where $S_L \coloneq \{\sigma : \{1,...,L\} \hookrightarrow \{1,...,L\}\}$ is the symmetric group on $L$ letters.
\end{lemma}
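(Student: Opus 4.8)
The plan is to establish the four identities in the stated order, each one following from its predecessor by an elementary differentiation; there is no analytic subtlety here, only careful bookkeeping.

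For \eqref{Eq_A_derivative0} I would differentiate the defining relation $(A-z\operatorname{Id})^{-1}(A-z\operatorname{Id}) = \operatorname{Id}$ with respect to the scalar entry $A_{i,j}$. Since the right-hand side and $z$ are constant and $\partial(A-z\operatorname{Id})/\partial A_{i,j} = \mathrm{u}_i^{(d)}(\mathrm{u}_j^{(d)})^\top$, the product rule gives
\[
\Big(\tfrac{\partial}{\partial A_{i,j}}(A-z\operatorname{Id})^{-1}\Big)(A-z\operatorname{Id}) + (A-z\operatorname{Id})^{-1}\mathrm{u}_i^{(d)}(\mathrm{u}_j^{(d)})^\top = 0 ,
\]
and right-multiplying by $(A-z\operatorname{Id})^{-1}$ yields \eqref{Eq_A_derivative0}. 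For \eqref{Eq_A_derivative1}, let $A$ depend smoothly on $x$; the chain rule gives
\[
\tfrac{\partial}{\partial x_{k_1}}(A(x)-z\operatorname{Id})^{-1} = \sum_{i,j}\tfrac{\partial A_{i,j}(x)}{\partial x_{k_1}}\,\tfrac{\partial}{\partial A_{i,j}}(A-z\operatorname{Id})^{-1} ,
\]
into which I substitute \eqref{Eq_A_derivative0}; pulling the two outer resolvents out of the double sum and using $\sum_{i,j}\tfrac{\partial A_{i,j}(x)}{\partial x_{k_1}}\mathrm{u}_i^{(d)}(\mathrm{u}_j^{(d)})^\top = \tfrac{\partial A(x)}{\partial x_{k_1}}$ collapses it into \eqref{Eq_A_derivative1}.

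Now write $R \coloneq (A-z\operatorname{Id})^{-1}$ and $\partial_i \coloneq \partial/\partial x_{k_i}$, with $\partial_\alpha \coloneq \prod_{i\in\alpha}\partial_i$ for a subset $\alpha$, so that \eqref{Eq_A_derivative1} reads $\partial_1 R = -R(\partial_1 A)R$. The formulas \eqref{Eq_A_derivative2} and \eqref{Eq_A_derivative3} then follow by iterating this with the Leibniz rule for products of matrix-valued maps. Differentiating the triple product $R(\partial_1 A)R$ with $\partial_2$ produces one term per factor: hitting either outer $R$ gives an insertion $-R(\partial_2 A)R$, and the two terms combine after relabelling into $\sum_{\sigma\in S_2}R(\partial_{\sigma(1)}A)R(\partial_{\sigma(2)}A)R$; hitting the middle factor gives $-R(\partial_1\partial_2 A)R$ --- this is \eqref{Eq_A_derivative2}. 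Applying $\partial_3$ once more: in each term of the $S_2$-sum there are three $R$-slots into which a new factor $\partial_3 A$ can be inserted, and the resulting $2\cdot 3 = 6$ terms assemble (with the sign coming from $\partial_i R = -R(\partial_i A)R$) into $-\sum_{\sigma\in S_3}R(\partial_{\sigma(1)}A)R(\partial_{\sigma(2)}A)R(\partial_{\sigma(3)}A)R$; hitting instead one of the two factors $\partial_{\sigma(i)}A$ raises it to a second-order derivative and contributes to the middle sum; and the lone term $-R(\partial_1\partial_2 A)R$ of \eqref{Eq_A_derivative2} contributes, upon differentiating its two $R$'s, two further second-order terms, and upon differentiating $\partial_1\partial_2 A$ the final term $-R(\partial_1\partial_2\partial_3 A)R$. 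Grouping all the second-order terms according to which flanking factor carries which subset of $\{1,2,3\}$ reproduces exactly $\sum_{\alpha\sqcup\beta=\{1,2,3\},\,\alpha,\beta\neq\emptyset}R(\partial_\alpha A)R(\partial_\beta A)R$ over \emph{ordered} pairs, which is \eqref{Eq_A_derivative3}.

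The one step deserving attention is this last expansion: one must verify that after applying the Leibniz rule the coefficient of each term type is exactly $1$ and the signs are as claimed --- in particular that the middle sum genuinely ranges over ordered pairs $(\alpha,\beta)$ with $\alpha\sqcup\beta=\{1,2,3\}$, so that $(\{1\},\{2,3\})$ and $(\{2,3\},\{1\})$ each appear once, matching the two resolvents that flank a fixed $A$-derivative on either side, and that the fully first-order sum over $S_3$ enters with a minus sign. Smoothness of $A$ makes the mixed partials $\partial_\alpha A$ well defined (symmetry of mixed partials), and all resolvents exist by hypothesis, so beyond this combinatorial check there is nothing further to do.
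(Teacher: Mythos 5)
Your proposal is correct, and for the higher derivatives \eqref{Eq_A_derivative1}--\eqref{Eq_A_derivative3} it follows essentially the paper's own route: chain rule over the entries to get \eqref{Eq_A_derivative1}, then iterated Leibniz differentiation of $\partial_i R = -R(\partial_i A)R$, with exactly the bookkeeping you describe (your count of the six $S_3$-terms, the six ordered pairs $(\alpha,\beta)$ with $\alpha\sqcup\beta=\{1,2,3\}$, and the single third-order term matches \eqref{Eq_A_derivative3}). The one place where you genuinely diverge is the base identity \eqref{Eq_A_derivative0}: you differentiate $(A-z\operatorname{Id})^{-1}(A-z\operatorname{Id})=\operatorname{Id}$ by the product rule, whereas the paper expands the resolvent in a Neumann series $-z^{-1}\sum_k z^{-k}A^k$ for $|z|$ large, differentiates term by term, re-sums, and then extends the identity to all of $\{z:\det(A-z\operatorname{Id})\neq 0\}$ by analyticity. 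Your argument is shorter and avoids both the series manipulation and the analytic-continuation step; its only tacit ingredient is that $A\mapsto(A-z\operatorname{Id})^{-1}$ is differentiable in each entry $A_{i,j}$ before the product rule can be applied, which is standard (each entry of the inverse is a rational function of the entries of $A$ by Cramer's rule, equivalently matrix inversion is smooth on the open set of invertible matrices) and would be worth one explicit sentence. The paper's series computation, by contrast, produces the derivative explicitly in a regime of absolute convergence at the cost of the extra continuation argument; both are complete, and nothing in the later applications depends on which derivation is used.
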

\
\\
By (\ref{Eq_A_derivative3}) we have
\begin{align*}
	& \frac{\partial^3}{\partial x_{r_1} \, \partial x_{r_2} \, \partial x_{r_3}} \cs_{B(x)B^*(x)}(z)\\
	& = - \frac{1}{d} \sum\limits_{\sigma \in S_3} \tr\Big( (A-z\operatorname{Id})^{-1} \frac{\partial A(x)}{\partial x_{r_{\sigma(1)}}} (A-z\operatorname{Id})^{-1} \frac{\partial A(x)}{\partial x_{r_{\sigma(2)}}} (A-z\operatorname{Id})^{-1} \frac{\partial A(x)}{\partial x_{r_{\sigma(3)}}} (A-z\operatorname{Id})^{-1} \Big)\\
	& \hspace{1cm} + \frac{1}{d} \sum\limits_{\substack{\alpha \sqcup \beta = \{1,2,3\} \\ \alpha,\beta \neq \emptyset}} \tr\Big( (A-z\operatorname{Id})^{-1} \frac{\partial^{\#\alpha}A(x)}{\partial x_{r_{\alpha}}} (A-z\operatorname{Id})^{-1} \frac{\partial^{\#\beta}A(x)}{\partial x_{r_{\beta}}} (A-z\operatorname{Id})^{-1} \Big)\\
	& \hspace{1cm} - \frac{1}{d} \tr\Big( (A-z\operatorname{Id})^{-1} \underbrace{\frac{\partial^3A(x)}{\partial x_{r_1} \, \partial x_{r_2} \, \partial x_{r_3}}}_{=0} (A-z\operatorname{Id})^{-1} \Big) \ .
\end{align*}
The trace bound (\ref{Eq_TraceBound_Neumann}) with (\ref{Eq_SpectralDerivativeBound1})-(\ref{Eq_SpectralResolventBound}) then yields
\begin{align*}
	& \Big| \frac{\partial^3}{\partial x_{r_1} \, \partial x_{r_2} \, \partial x_{r_3}} \cs_{B(x)B^*(x)}(z) \Big|\\
	& \leq \frac{6K}{d} \frac{(2||B(x)||\kappa)^3}{\Im(z)^4} + \frac{3K}{d} \frac{2\kappa^2 \, 2||B(x)||\kappa}{\Im(z)^3} = \frac{48 K ||B(x)||^3 \kappa^3}{d \Im(z)^4} + \frac{12 K ||B(x)|| \kappa^3}{d \Im(z)^3} \ . \qed
\end{align*}

\subsection{Proof of Lemma \ref{Lemma_ResolventDerivatives}}\label{Proof_Lemma_ResolventDerivatives}
The set $M \coloneq \{z \in \C \mid \det(A-z\operatorname{Id}) \neq 0\}$ is open, connected and both sides of the equality are analytic on the set, which means it suffices to show the equality on an open subset. For all $z \in M$ large enough that $||z^{-1}A||<1$ we have
\begin{align*}
	& (A-z\operatorname{Id})^{-1} = -z^{-1} (\operatorname{Id}-z^{-1}A)^{-1} = -z^{-1} \sum\limits_{k=0}^\infty z^{-k} A^{k} \ ,
\end{align*}
which yields
\begin{align*}
	& \frac{\partial }{\partial A_{i,j}} (A-z\operatorname{Id})^{-1} = -z^{-1} \sum\limits_{k=0}^\infty z^{-k} \frac{\partial}{\partial A_{i,j}} A^{k}\\
	& = -z^{-1} \sum\limits_{k=0}^\infty z^{-k} \sum\limits_{\substack{r=1}}^k A \cdots A \cdot \underbrace{\mathrm{u}_i^{(d)} (\mathrm{u}_j^{(d)})^\top}_{r \text{-th pos.}} \cdot A \cdots A\\
	& = -z^{-2} \sum\limits_{k=0}^\infty \sum\limits_{\substack{r=1}}^k (z^{-1}A) \cdots (z^{-1}A) \cdot \underbrace{\mathrm{u}_i^{(d)} (\mathrm{u}_j^{(d)})^\top}_{r \text{-th pos.}} \cdot (z^{-1}A) \cdots (z^{-1}A)\\
	& = -z^{-2} (\operatorname{Id}-z^{-1}A)^{-1} \mathrm{u}_i^{(d)} (\mathrm{u}_j^{(d)})^\top (\operatorname{Id}-z^{-1}A)^{-1}\\
	& = - (A-z\operatorname{Id})^{-1} \mathrm{u}_i^{(d)} (\mathrm{u}_j^{(d)})^\top (A-z\operatorname{Id})^{-1} \ .
\end{align*}
The previous observation for $L=1$ with chain rule yields
\begin{align*}
	& \frac{\partial}{\partial x_{k_1}} (A(x)-z\operatorname{Id})^{-1} = \sum\limits_{i,j=1}^d \frac{\partial A(x)}{\partial x_{k_1}} \frac{\partial}{\partial A_{i,j}} (A(x)-z\operatorname{Id})^{-1} = - (A-z\operatorname{Id})^{-1} \frac{\partial A(x)}{\partial x_{k_1}} (A-z\operatorname{Id})^{-1} \ ,
\end{align*}
which proves (\ref{Eq_A_derivative1}). To prove (\ref{Eq_A_derivative2}) we then see
\begin{align*}
	& \frac{\partial^2}{\partial x_{k_1} \, \partial x_{k_2}} (A(x)-z\operatorname{Id})^{-1} = - \frac{\partial}{\partial x_{k_2}} (A-z\operatorname{Id})^{-1} \frac{\partial A(x)}{\partial x_{k_1}} (A-z\operatorname{Id})^{-1}\\
	& = - \Big[ \frac{\partial}{\partial x_{k_2}} (A-z\operatorname{Id})^{-1} \Big] \frac{\partial A(x)}{\partial x_{k_1}} (A-z\operatorname{Id})^{-1} - (A-z\operatorname{Id})^{-1} \frac{\partial A(x)}{\partial x_{k_1}} \Big[ \frac{\partial}{\partial x_{k_2}} (A-z\operatorname{Id})^{-1} \Big]\\
	& \hspace{1cm} - (A-z\operatorname{Id})^{-1} \frac{\partial^2A(x)}{\partial x_{k_1} \, \partial x_{k_2}} (A-z\operatorname{Id})^{-1}\\
	& = (A-z\operatorname{Id})^{-1} \frac{\partial A(x)}{\partial x_{k_2}} (A-z\operatorname{Id})^{-1} \frac{\partial A(x)}{\partial x_{k_1}} (A-z\operatorname{Id})^{-1}\\
	& \hspace{1cm} + (A-z\operatorname{Id})^{-1} \frac{\partial A(x)}{\partial x_{k_1}} (A-z\operatorname{Id})^{-1} \frac{\partial A(x)}{\partial x_{k_2}} (A-z\operatorname{Id})^{-1}\\
	& \hspace{1cm} - (A-z\operatorname{Id})^{-1} \frac{\partial^2A(x)}{\partial x_{k_1} \, \partial x_{k_2}} (A-z\operatorname{Id})^{-1} \ .
\end{align*}
Analogously, we for (\ref{Eq_A_derivative3}) see
\begin{align*}
	& \frac{\partial^3}{\partial x_{k_1} \, \partial x_{k_2} \, \partial x_{k_3}} (A(x)-z\operatorname{Id})^{-1}\\
	& = \frac{\partial}{\partial x_{k_3}} (A-z\operatorname{Id})^{-1} \frac{\partial A(x)}{\partial x_{k_2}} (A-z\operatorname{Id})^{-1} \frac{\partial A(x)}{\partial x_{k_1}} (A-z\operatorname{Id})^{-1}\\
	& \hspace{1cm} + \frac{\partial }{\partial x_{k_3}} (A-z\operatorname{Id})^{-1} \frac{\partial A(x)}{\partial x_{k_1}} (A-z\operatorname{Id})^{-1} \frac{\partial A(x)}{\partial x_{k_2}} (A-z\operatorname{Id})^{-1}\\
	& \hspace{1cm} - \frac{\partial}{\partial x_{k_3}} (A-z\operatorname{Id})^{-1} \frac{\partial^2A(x)}{d\partial _{k_1} \, d\partial _{k_2}} (A-z\operatorname{Id})^{-1}\\
	& = - \sum\limits_{\sigma \in S_3} (A-z\operatorname{Id})^{-1} \frac{\partial A(x)}{\partial x_{k_{\sigma(1)}}} (A-z\operatorname{Id})^{-1} \frac{\partial A(x)}{\partial x_{k_{\sigma(2)}}} (A-z\operatorname{Id})^{-1} \frac{\partial A(x)}{\partial x_{k_{\sigma(3)}}} (A-z\operatorname{Id})^{-1}\\
	& \hspace{1cm} + \sum\limits_{\substack{\alpha \sqcup \beta = \{1,2,3\} \\ \alpha,\beta \neq \emptyset}} (A-z\operatorname{Id})^{-1} \frac{\partial ^{\#\alpha}A(x)}{\partial x_{k_\alpha}} (A-z\operatorname{Id})^{-1} \frac{\partial ^{\#\beta}A(x)}{\partial x_{k_\beta}} (A-z\operatorname{Id})^{-1}\\
	& \hspace{1cm} - (A-z\operatorname{Id})^{-1} \frac{\partial^3A(x)}{\partial x_{k_1} \, \partial x_{k_2} \, \partial x_{k_3}} (A-z\operatorname{Id})^{-1} \ . \qed
\end{align*}

\subsection{Proof of Lemma \ref{Lemma_CrudeTraceMomentBound}}\label{Proof_Lemma_CrudeTraceMomentBound}
The first bound is easily calculated with
\begin{align*}
	& \E\big[ ||\bm{B}||^{2} \big] \leq \E\big[ \tr\big(\bm{B}\bm{B}^*\big) \big] = \sum\limits_{j=1}^d \sum\limits_{s=1}^n \E\big[ |\bm{B}_{j,s}|^2 \big]\\
	& = \frac{1}{2m+1} \sum\limits_{j=1}^d \sum\limits_{s=1}^n \E\big[ |(\bm{X}^{(K)} V D_r^{\frac{1}{2}})_{j,s}|^2 \big]\\
	& = \frac{1}{2m+1} \sum\limits_{j=1}^d \sum\limits_{\substack{s=1 \\ \rho_n(r,s-1)\leq m}}^n \E\big[ \bigg|\sum\limits_{t=0}^{n-1} \bm{X}^{(K)}_{j,t+1} V_{t+1,s}\bigg|^2 \big]\\
	& = \frac{1}{2m+1} \sum\limits_{j=1}^d \sum\limits_{\substack{s=1 \\ \rho_n(r,s-1)\leq m}}^n \E\bigg[ \bigg|\sum\limits_{t=0}^{n-1} \sum\limits_{k=0}^K (\Psi_{k} \eta_{t-k})_j V_{t+1,s}\bigg|^2 \bigg]\\
	& = \frac{1}{2m+1} \sum\limits_{j=1}^d \sum\limits_{\substack{s=1 \\ \rho_n(r,s-1)\leq m}}^n \sum\limits_{t_1,t_2=0}^{n-1} \sum\limits_{k_1,k_2=0}^K V_{t_1+1,s} \ol{V}_{t_2+1,s} \E\big[ (\mathrm{u}_j^{(d)})^\top \Psi_{k_1} \eta_{t_1-k_1} \eta_{t_2-k_2}^\top \Psi_{k_1}^\top \mathrm{u}_j^{(d)} \big]\\
	& = \frac{1}{2m+1} \sum\limits_{j=1}^d \sum\limits_{\substack{s=1 \\ \rho_n(r,s-1)\leq m}}^n \sum\limits_{k_1,k_2=0}^K \underbrace{\bigg(\sum\limits_{t_1=0}^{n-1} V_{t_1-k_1+k_2+1,s} \ol{V}_{t_1-k_1+k_2+1,s}\bigg)}_{= 1} (\mathrm{u}_j^{(d)})^\top \Psi_{k_1} \Psi_{k_1}^\top \mathrm{u}_j^{(d)}\\
	& = \frac{1}{2m+1} \sum\limits_{\substack{s=1 \\ \rho_n(r,s-1)\leq m}}^n \tr\bigg( \Big( \sum\limits_{k=0}^K \Psi_k \Big) \Big( \sum\limits_{k=0}^K \Psi_k \Big)^\top \bigg) \overset{\text{(\ref{Eq_TraceBound_Neumann})}}{\leq} d \bigg( \sum\limits_{k=0}^K ||\Psi_k|| \bigg) \overset{\text{(\ref{Eq_Assumption_LongRandeDependence_cK2})}}{\leq} \cK_2^2 d \ .
\end{align*}
For the second bound we also make use of $\E\big[ ||\bm{B}||^{6} \big] \leq \E\big[ \tr\big((\bm{B}\bm{B}^\top)^3\big) \big]$ and expand the trace to
\begin{align}\label{Eq_Univ_TraceExpansion}
	& \E\big[ \tr\big((\bm{B}\bm{B}^*)^3\big) \big] \nonumber\\
	& = \sum\limits_{j_1,j_2,j_3=1}^d \sum\limits_{s_1,s_2,s_3=1}^n \E\big[ \bm{B}_{j_1,s_1} \ol{\bm{B}}_{j_2,s_1} \cdot \bm{B}_{j_2,s_2} \ol{\bm{B}}_{j_3,s_2} \cdot \bm{B}_{j_3,s_3} \ol{\bm{B}}_{j_1,s_3} \big] \nonumber\\
	& = \frac{1}{(2m+1)^3} \sum\limits_{j_1,j_2,j_3=1}^d \sum\limits_{\substack{s_1,s_2,s_3=1 \\ \rho_n(r,s_\bullet) \leq m}}^n \E\big[ (\bm{X}^{(K)} V)_{j_1,s_1} (\bm{X}^{(K)} \ol{V})_{j_2,s_1} \cdot (\bm{X}^{(K)} V)_{j_2,s_2} (\bm{X}^{(K)} \ol{V})_{j_3,s_2} \nonumber\\
	& \hspace{8cm} \cdot (\bm{X}^{(K)} V)_{j_3,s_3} (\bm{X}^{(K)} \ol{V})_{j_1,s_3} \big] \nonumber\\
	& = \frac{1}{(2m+1)^3} \sum\limits_{\substack{s_1,s_2,s_3=1 \\ \rho_n(r,s_\bullet) \leq m}}^n \sum\limits_{t_1,...,t_6=0}^{n-1} V_{t_1+1,s_1} \ol{V}_{t_4+1,s_1} \cdot V_{t_2+1,s_2} \ol{V}_{t_5+1,s_2} \cdot V_{t_3+1,s_3} \ol{V}_{t_6+1,s_3} \nonumber\\
	& \hspace{2cm} \times \sum\limits_{j_1,j_2,j_3=1}^d \E\big[ \bm{X}^{(K)}_{j_1,t_1+1} \bm{X}^{(K)}_{j_2,t_4+1} \cdot \bm{X}^{(K)}_{j_2,t_2+1} \bm{X}^{(K)}_{j_3,t_5+1} \cdot \bm{X}^{(K)}_{j_3,t_3+1} \bm{X}^{(K)}_{j_1,t_6+1} \big] \ .
\end{align}
and further expand
\begin{align}\label{Eq_CrudeTB_MeanExpansion1}
	& \sum\limits_{j_1,j_2,j_3=1}^d \E\big[ \bm{X}^{(K)}_{j_1,t_1+1} \bm{X}^{(K)}_{j_2,t_4+1} \cdot \bm{X}^{(K)}_{j_2,t_2+1} \bm{X}^{(K)}_{j_3,t_5+1} \cdot \bm{X}^{(K)}_{j_3,t_3+1} \bm{X}^{(K)}_{j_1,t_6+1} \big]\\
	& = \sum\limits_{j_1,j_2,j_3=1}^d \sum\limits_{k_1,...,k_6=0}^K \E\big[ (\Psi_{k_1} \eta_{t_1-k_1})_{j_1} (\Psi_{k_4} \eta_{t_4-k_4})_{j_2} \cdot (\Psi_{k_2} \eta_{t_2-k_2})_{j_2} (\Psi_{k_5} \eta_{t_5-k_5})_{j_3} \nonumber\\
	& \hspace{6cm} \cdot (\Psi_{k_3} \eta_{t_3-k_3})_{j_3} (\Psi_{k_6} \eta_{t_6-k_6})_{j_1} \big]\\
	& = \sum\limits_{j_1,j_2,j_3=1}^d \sum\limits_{k_1,...,k_6=0}^K \sum\limits_{i_1,...,i_6=1}^d \E\bigg[ \prod\limits_{q=1}^6 (\eta_{t_q-k_q})_{i_q} \bigg] \nonumber\\
	& \hspace{1cm} \times (\Psi_{k_1})_{j_1,i_1} (\Psi_{k_4})_{j_2,i_4} \cdot (\Psi_{k_2})_{j_2,i_2} (\Psi_{k_5})_{j_3,i_5} \cdot (\Psi_{k_3})_{j_3,i_3} (\Psi_{k_6})_{j_1,i_6}
\end{align}
Note that the mean $\E\big[ \prod\limits_{q=1}^6 (\eta_{t_q-k_q})_{i_q} \big]$ is zero unless the indexes $(\tilde{t}_1,i_1),...,(\tilde{t}_6,i_6)$ come in pairs, where
\begin{align}
	& \tilde{t}_q \coloneq t_q-k_q \label{Eq_Def_tilde_t} \ .
\end{align}
As in Lemma \ref{Lemma_WicksFormula}, let $\Pi_2(6)$ be the set of parings of $\{1,...,6\}$.
With the notation
\begin{align}\label{Eq_Def_Xi}
	& \Xi_{t} \coloneq \diag\big( (\eta_t)_1,...,(\eta_t)_d \big)
\end{align}
and by reverting the summation over $j_1,j_2,j_3$ back into matrix products, we turn (\ref{Eq_CrudeTB_MeanExpansion1}) into
\begin{align}\label{Eq_CrudeTB_MeanExpansion2}
	& \sum\limits_{j_1,j_2,j_3=1}^d \E\big[ \bm{X}^{(K)}_{j_1,t_1+1} \bm{X}^{(K)}_{j_2,t_4+1} \cdot \bm{X}^{(K)}_{j_2,t_2+1} \bm{X}^{(K)}_{j_3,t_5+1} \cdot \bm{X}^{(K)}_{j_3,t_3+1} \bm{X}^{(K)}_{j_1,t_6+1} \big] \nonumber\\
	& = \sum\limits_{k_1,...,k_6=0}^K \sum\limits_{i_1,...,i_6=1}^d \mathbbm{1}_{\exists \bm{\pi} \in \Pi_2(6) , \, \forall \{q,q'\} \in \bm{\pi} : \, (\tilde{t}_q,i_q)=(\tilde{t}_{q'},i_{q'})} \, \E\Big[ (\Xi_{t_1-k_1} \Psi_{k_1}^\top \Psi_{k_6} \Xi_{t_6-k_6})_{i_1,i_6} \nonumber\\
	& \hspace{1cm} \times (\Xi_{t_2-k_2} \Psi_{k_2}^\top \Psi_{k_4} \Xi_{t_4-k_4})_{i_2,i_4} (\Xi_{t_3-k_3} \Psi_{k_3}^\top \Psi_{k_5} \Xi_{t_5-k_5})_{i_3,i_5} \Big] \ .
\end{align}
Plugging (\ref{Eq_CrudeTB_MeanExpansion2}) back into (\ref{Eq_Univ_TraceExpansion}) and changing the order of summation gives
\begin{align}\label{Eq_Univ_TraceExpansion2}
	& \E\big[ \tr\big((\bm{B}\bm{B}^*)^3\big) \big] \nonumber\\
	& = \frac{1}{(2m+1)^3} \sum\limits_{k_1,...,k_6=0}^K \sum\limits_{i_1,...,i_6=1}^d \sum\limits_{\substack{s_1,...,s_6=1\\ s_\bullet=s_{\bullet \pm 3} \\ \rho_n(r,s_\bullet) \leq m}}^n \sum\limits_{t_1,...,t_6=0}^{n-1} \mathbbm{1}_{\exists \bm{\pi} \in \Pi_2(6) , \, \forall \{q,q'\} \in \bm{\pi} : \, (\tilde{t}_q,i_q)=(\tilde{t}_{q'},i_{q'})}\nonumber\\
	& \hspace{0.5cm} \times V_{t_1+1,s_1} \ol{V}_{t_4+1,s_4} \cdot V_{t_2+1,s_2} \ol{V}_{t_5+1,s_5} \cdot V_{t_3+1,s_3} \ol{V}_{t_6+1,s_6} \nonumber\\
	& \hspace{1cm} \times \E\Big[ (\Xi_{t_1-k_1} \Psi_{k_1}^\top \Psi_{k_6} \Xi_{t_6-k_6})_{i_1,i_6}  (\Xi_{t_2-k_2} \Psi_{k_2}^\top \Psi_{k_4} \Xi_{t_4-k_4})_{i_2,i_4} \nonumber\\
	& \hspace{6cm} \times (\Xi_{t_3-k_3} \Psi_{k_3}^\top \Psi_{k_5} \Xi_{t_5-k_5})_{i_3,i_5} \Big] \ .
\end{align}
The error made when instead summing over all pairings $\bm{\pi} \in \Pi_2(6)$ is bounded by
\begin{align}\label{Eq_Crude_PairingErrorBound1}
	& \bigg| \E\big[ \tr\big((\bm{B}\bm{B}^*)^3\big) \big] - \sum\limits_{\bm{\pi} \in \Pi_2(6)} \frac{1}{(2m+1)^3} \sum\limits_{k_1,...,k_6=0}^K \sum\limits_{i_1,...,i_6=1}^d \sum\limits_{\substack{s_1,...,s_6=1\\ s_\bullet=s_{\bullet \pm 3} \\ \rho_n(r,s_\bullet) \leq m}}^n \nonumber\\
	& \hspace{3.5cm} \sum\limits_{t_1,...,t_6=0}^{n-1} \mathbbm{1}_{\forall \{q,q'\} \in \bm{\pi} : \, (\tilde{t}_q,i_q)=(\tilde{t}_{q'},i_{q'})} \, V_{t_1+1,s_1} \cdots \ol{V}_{t_6+1,s_6} \E\big[ ... \big] \bigg| \nonumber\\
	& \leq \sum\limits_{\substack{\bm{\pi}_1, \bm{\pi_2} \in \Pi_2(6) \\ \bm{\pi}_1 \neq \bm{\pi_2}}} \frac{1}{(2m+1)^3} \sum\limits_{k_1,...,k_6=0}^K \sum\limits_{i_1,...,i_6=1}^d \sum\limits_{\substack{s_1,...,s_6=1\\ s_\bullet=s_{\bullet \pm 3} \\ \rho_n(r,s_\bullet) \leq m}}^n \nonumber\\
	& \hspace{1.5cm} \sum\limits_{t_1,...,t_6=0}^{n-1} \mathbbm{1}_{\forall \{q,q'\} \in \bm{\pi}_1 \cup \bm{\pi}_2 : \, (\tilde{t}_q,i_q)=(\tilde{t}_{q'},i_{q'})} \, \Big|V_{t_1+1,s_1} \cdots \ol{V}_{t_6+1,s_6}\Big| \, \Big|\E\big[ ... \big]\Big| \ .
\end{align}
Taking advantage of the fact that the two pairings $\bm{\pi}_1$ and $\bm{\pi}_2$ together split $\{1,...,6\}$ into at most two cycles, we bound the degrees of freedom in the sums over $i_\bullet$ and $t_\bullet$ to see
\begin{align*}
	& \text{(\ref{Eq_Crude_PairingErrorBound1})} \leq \sum\limits_{\substack{\bm{\pi}_1, \bm{\pi_2} \in \Pi_2(6) \\ \bm{\pi}_1 \neq \bm{\pi_2}}} \frac{1}{(2m+1)^3n^3} \sum\limits_{k_1,...,k_6=0}^K \sum\limits_{\substack{s_1,...,s_6=1\\ s_\bullet=s_{\bullet \pm 3} \\ \rho_n(r,s_\bullet) \leq m}}^n \nonumber\\
	& \hspace{1.5cm} \sum\limits_{t_1,...,t_6=0}^{n-1} \mathbbm{1}_{\forall \{q,q'\} \in \bm{\pi}_1 \cup \bm{\pi}_2 : \, (\tilde{t}_q,i_q)=(\tilde{t}_{q'},i_{q'})} \, d^2 ||\Psi_{k_1}||\cdots||\Psi_{k_6}|| \sup\limits_{t \in \Z} \E\big[ \max_{j\leq d} |(\eta_{t})_j|^6 \big]\\
	& \leq \sum\limits_{\substack{\bm{\pi}_1, \bm{\pi_2} \in \Pi_2(6) \\ \bm{\pi}_1 \neq \bm{\pi_2}}} \frac{1}{(2m+1)^3n^3} \bigg( \underbrace{\sum\limits_{k=0}^\infty ||\Psi_k||}_{\leq \cK_2} \bigg)^6 (2m+1)^2 \, n^2 \, d^2 \sup\limits_{t \in \Z} \E\big[ \max_{j\leq d} |(\eta_{t})_j|^6 \big]\\
	& \leq (\#\Pi_2(6))^2 \sup\limits_{t \in \Z} \E\big[ \max_{j\leq d} |(\eta_{t})_j|^6 \big] \cK_2^6 \frac{d^2}{(2m+1)n} = 15^2 \sup\limits_{t \in \Z} \E\big[ \max_{j\leq d} |(\eta_{t})_j|^6 \big] \cK_2^6 \frac{d^2}{(2m+1)n} \ ,
\end{align*}
where we have used (\ref{Eq_TraceBound_Neumann}) to bound the mean and the sum over $i_\bullet$ by
\begin{align*}
	& d^2 ||\Psi_{k_1}||\cdots||\Psi_{k_6}|| \, \sup\limits_{t \in \Z} \E\big[ \max_{j\leq d} |(\eta_{t})_j|^6 \big] \ .
\end{align*}
We have thus shown
\begin{align}\label{Eq_Univ_TraceExpansion3}
	& \E\big[ \tr\big((\bm{B}\bm{B}^*)^3\big) \big] - R \nonumber\\
	& = \frac{1}{(2m+1)^3} \sum\limits_{\bm{\pi} \in \Pi_2(6)} \sum\limits_{k_1,...,k_6=0}^K \sum\limits_{\substack{s_1,...,s_6=1\\ s_\bullet=s_{\bullet \pm 3} \\ \rho_n(r,s_\bullet) \leq m}}^n \nonumber\\
	& \hspace{0.5cm} \sum\limits_{\substack{t_1,...,t_6=1 \\ t_q-k_q=t_{q'}-k_{q'}, \, \forall \{q,q'\} \in \bm{\pi}}}^{n-1} V_{t_1+1,s_1} \ol{V}_{t_4+1,s_4} \cdot V_{t_2+1,s_2} \ol{V}_{t_5+1,s_5} \cdot V_{t_3+1,s_3} \ol{V}_{t_6+1,s_6} \nonumber\\
	& \hspace{1cm} \times \sum\limits_{\substack{i_1,...,i_6=1 \\ i_q=i_{q'}, \, \forall \{q,q'\} \in \bm{\pi}}}^d \E\Big[ (\Xi_{t_1-k_1} \Psi_{k_1}^\top \Psi_{k_6} \Xi_{t_6-k_6})_{i_1,i_6}  (\Xi_{t_2-k_2} \Psi_{k_2}^\top \Psi_{k_4} \Xi_{t_4-k_4})_{i_2,i_4} \nonumber\\
	& \hspace{6cm} \times (\Xi_{t_3-k_3} \Psi_{k_3}^\top \Psi_{k_5} \Xi_{t_5-k_5})_{i_3,i_5} \Big]
\end{align}
for an error term $R$ satisfying $|R| \leq 15^2 \sup\limits_{t \in \Z} \E\big[ \max_{j\leq d} |(\eta_{t})_j|^6 \big] \cK_2^6 \frac{d^2}{(2m+1)n}$. We are now in position to use the construction of the matrix $V$. Note that final sum on the right hand side of (\ref{Eq_Univ_TraceExpansion3}) does not depend on the exact values of $t_1,...,t_6$ but only on the equality structure between $t_1-k_1,...,t_6-k_6$. Also note that the final sum on the right hand side of (\ref{Eq_Univ_TraceExpansion3}) can again by (\ref{Eq_TraceBound_Neumann}) be bounded by $\E\big[ \max_{j\leq d} |(\eta_{t})_j|^6 \big] d^{I(\bm{\pi})} ||\Psi_{k_1}|| \cdots ||\Psi_{k_6}||$, where $I(\bm{\pi})$ denotes the number of cycles that $\{1,...,6\}$ is split into by the two pairings $\bm{\pi}$ and $\{\{1,6\},\{2,4\},\{3,5\}\}$. The observation
\begin{align*}
	& \bigg|\sum\limits_{t=0}^{n-1} e^{-2\pi i \frac{ts+(t+k)s'}{n}}\bigg| = \bigg|e^{-2\pi i \frac{ks'}{n}} \sum\limits_{t=0}^{n-1} e^{-2\pi i \frac{t(s+s')}{n}}\bigg| = \mathbbm{1}_{s=s' \mod n} \, n
\end{align*}
then yields
\begin{align*}
	\text{(\ref{Eq_Univ_TraceExpansion3})} & \leq \frac{1}{(2m+1)^3} \sum\limits_{\bm{\pi} \in \Pi_2(6)} \sum\limits_{k_1,...,k_6=0}^K (2m+1)^{J(\bm{\pi})} \frac{n^3}{n^3} d^{I(\bm{\pi})} ||\Psi_{k_1}|| \cdots ||\Psi_{k_6}|| \sup\limits_{t \in \Z} \E\big[ \max_{j\leq d} |(\eta_{t})_j|^6 \big]\\
	& \leq \sup\limits_{t \in \Z} \E\big[ \max_{j\leq d} |(\eta_{t})_j|^6 \big] \cK_2^6 \frac{1}{(2m+1)^3} \sum\limits_{\bm{\pi} \in \Pi_2(6)} (2m+1)^{J(\bm{\pi})} d^{I(\bm{\pi})} \ ,
\end{align*}
where $J(\bm{\pi})$ denotes the number of cycles that $\{1,...,6\}$ is split into by the two pairings $\bm{\pi}$ and $\{\{1,4\},\{2,5\},\{3,6\}\}$. Since the event $I(\bm{\pi}) = 3$ already implies $J(\bm{\pi}) = 1$ and vice-versa, we thus have
\begin{align*}
	\text{(\ref{Eq_Univ_TraceExpansion3})} & \leq \sup\limits_{t \in \Z} \E\big[ \max_{j\leq d} |(\eta_{t})_j|^6 \big] \cK_2^6 \#\Pi_2(6) \frac{\max(d, 2m+1)^4}{(2m+1)^3}\\
	& = 15 \sup\limits_{t \in \Z} \E\big[ \max_{j\leq d} |(\eta_{t})_j|^6 \big] \cK_2^6 \frac{\max(d, 2m+1)^4}{(2m+1)^3} \ ,
\end{align*}
which by the bound on $R$ gives
\begin{align*}
	& \E\big[ ||\bm{B}||^{6} \big] \leq \E\big[ \tr\big((\bm{B}\bm{B}^*)^3\big) \big]\\
	& \leq 15 \sup\limits_{t \in \Z} \E\big[ \max_{j\leq d} |(\eta_{t})_j|^6 \big] \cK_2^6 \frac{\max(d, 2m+1)^4}{(2m+1)^3} + 15^2 \sup\limits_{t \in \Z} \E\big[ \max_{j\leq d} |(\eta_{t})_j|^6 \big] \cK_2^6 \frac{d^2}{(2m+1)n} \ . \qed
\end{align*}

\subsection{Proof of Lemma \ref{Lemma_Approximation_FiniteLinear}}\label{Proof_Lemma_Approximation_FiniteLinear}
For any $\tau \in [0,1]$ define
\begin{align*}
	& X^{(K,\tau)}_t \coloneq \tau X_t + (1-\tau) X^{(K)}_t = \sum\limits_{k=0}^K \Psi_{k} \Sigma^{\frac{1}{2}} \xi_{t-k} + \tau \sum\limits_{k=K+1}^\infty \Psi_{k} \Sigma^{\frac{1}{2}} \xi_{t-k} \ .
\end{align*}
With $\bm{X}^{(K,\tau)} = [X^{(K,\tau)}_0,...,X^{(K,\tau)}_{n-1}] = \tau \bm{X} + (1-\tau) \bm{X}^{(K)}$ we analogously define
\begin{align*}
	& S^{(K,\tau)}\Big( \frac{2\pi r}{n} \Big) = \frac{1}{2m+1} \bm{X}^{(K,\tau)} V D_r V^* (\bm{X}^{(K,\tau)})^\top \ .
\end{align*}
By the observation
\begin{align*}
	& \big| \cs_{\hat{\mu}_{S(\frac{2\pi r}{n})}}(z) - \cs_{\hat{\mu}_{S^{(K)}(\frac{2\pi r}{n})}}(z) \big| = \big| \cs_{\hat{\mu}_{S^{(K,1)}(\frac{2\pi r}{n})}}(z) - \cs_{\hat{\mu}_{S^{(K,0)}(\frac{2\pi r}{n})}}(z) \big|\\
	& = \bigg| \int_0^1 \frac{\partial}{\partial \tau} \cs_{\hat{\mu}_{S^{(K,\tau)}(\frac{2\pi r}{n})}}(z) \, d\tau \bigg|
\end{align*}
we can use the bound
\begin{align*}
	& \Big| \frac{\partial}{\partial \tau} \cs_{\hat{\mu}_{S^{(K,\tau)}(\frac{2\pi r}{n})}}(z) \Big| = \frac{1}{d} \Big| \tr\Big( \frac{\partial}{\partial \tau} \Big( S^{(K,\tau)}\Big(\frac{2\pi r}{n}\Big) - z\operatorname{Id}_d \Big)^{-1} \Big) \Big|\\
	& \overset{\text{(\ref{Eq_TraceBound_Neumann})}}{\leq} \Big|\Big| \frac{\partial}{\partial \tau} \Big( S^{(K,\tau)}\Big(\frac{2\pi r}{n}\Big) - z\operatorname{Id}_d \Big)^{-1} \Big|\Big|\\
	& \overset{\text{(\ref{Eq_A_derivative1})}}{=} \Big|\Big| \Big( S^{(K,\tau)}\Big(\frac{2\pi r}{n}\Big) - z\operatorname{Id}_d \Big)^{-1} \Big(\frac{\partial}{\partial \tau} S^{(K,\tau)}\Big(\frac{2\pi r}{n}\Big) \Big) \Big( S^{(K,\tau)}\Big(\frac{2\pi r}{n}\Big) - z\operatorname{Id}_d \Big)^{-1} \Big|\Big|
\end{align*}
and the fact that $||(A-z\operatorname{Id})^{-1}|| \leq \frac{1}{\Im(z)}$ for all Hermitian $A$ and $z \in \C^+$ to see
\begin{align*}
	& \big| \cs_{\hat{\mu}_{S(\frac{2\pi r}{n})}}(z) - \cs_{\hat{\mu}_{S^{(K)}(\frac{2\pi r}{n})}}(z) \big| \leq \frac{1}{\Im(z)^2} \max\limits_{\tau \in [0,1]} \Big|\Big| \frac{\partial}{\partial \tau} S^{(K,\tau)}\Big(\frac{2\pi r}{n}\Big) \Big|\Big|\\
	& = \frac{1}{\Im(z)^2 (2m+1)} \max\limits_{\tau \in [0,1]} \Big|\Big| \underbrace{\Big(\frac{\partial}{\partial \tau} \bm{X}^{(K,\tau)}\Big)}_{= \bm{X} - \bm{X}^{(K)}} V D_r V^* (\bm{X}^{(K,\tau)})^\top + \bm{X}^{(K,\tau)} V D_r V^* \Big(\frac{\partial}{\partial \tau} \bm{X}^{(K,\tau)}\Big)^\top \Big|\Big|\\
	& \leq \frac{2 ||\bm{X}^{(K,\tau)} V D_r^{\frac{1}{2}}||}{\Im(z)^2 (2m+1)} \big|\big| (\bm{X} - \bm{X}^{(K)}) V D_r^{\frac{1}{2}} \big|\big| \ .
\end{align*}
In complete analogy to (\ref{Eq_CrudeTB_Result2}) we have
\begin{align*}
	& \E\big[ ||(\bm{X} - \bm{X}^{(K)})V D_r^{\frac{1}{2}}||^6 \big]\\
	& \leq 15 \underbrace{\sup\limits_{t \in \Z} \E\big[ \max_{j\leq d} |(\xi_{t})_j|^6 \big]}_{\overset{\text{(\ref{Eq_Univ_MeanMax})}}{\leq} d^{\frac{1}{L}} C'_L} \bigg( \sum\limits_{k=K+1}^\infty ||\Psi_k|| \bigg)^6 \Big( \underbrace{\frac{\max(d, 2m+1)^4}{(2m+1)^3} + \frac{15 d^2}{(2m+1)n}}_{\leq C'' d \text{, since $\frac{d}{2m+1} \rightarrow c$ and $n \gg d$}} \Big) \ ,
\end{align*}
which by (\ref{Eq_Assumption_LongRandeDependence_gamma}) means
\begin{align*}
	& \E\big[ ||(\bm{X} - \bm{X}^{(K)})V D_r^{\frac{1}{2}}||^6 \big] \leq 15 C'_LC'' \cK_2^6 d^{1+\frac{1}{L}} K^{-6\gamma} \ .
\end{align*}
Analogously, we also see
\begin{align*}
	& \E\big[ ||\bm{X}^{(K,\tau)}V D_r^{\frac{1}{2}}||^6 \big] \leq 15 C'_LC'' \cK_2^6 d^{1+\frac{1}{L}} \ ,
\end{align*}
which gives
\begin{align*}
	& \E\Big[ \big| \cs_{\hat{\mu}_{S(\frac{2\pi r}{n})}}(z) - \cs_{\hat{\mu}_{S^{(K)}(\frac{2\pi r}{n})}}(z) \big|^6 \Big] \leq \frac{4 \cdot 15 C'_LC'' \cK_2^6 d^{1+\frac{1}{L}}}{\Im(z)^4 (2m+1)^2} 15 C'_LC'' \cK_2^6 d^{1+\frac{1}{L}} K^{-6\gamma}\\
	& \overset{\text{(\ref{Eq_Assumption_Asymptotics})}}{\leq} C(L) \frac{d^{\frac{2}{L}} K^{-6\gamma}}{\Im(z)^4} \ . \qed
\end{align*}

\section*{Funding}\label{Funding}
The author acknowledges the support of the Research Unit 5381 (DFG) RO 3766/8-1.

\bibliographystyle{imsart-nameyear} 
\bibliography{bibliographyWithoutDOI}  
	
\end{document}